\documentclass[12pt,fleqn,reqno]{article}   
\setlength\parindent{0pt}
\usepackage[margin=1.35in]{geometry}
\usepackage{amsmath,amsthm,amssymb,mathrsfs,stmaryrd,mathtools}
\usepackage{enumerate}
\usepackage{color}
\usepackage{hyperref}

\usepackage{graphicx}

\usepackage{epsf}
\usepackage{epic}
\usepackage{makeidx}
\usepackage{psfrag}


\theoremstyle{plain}
\newtheorem{theorem}{Theorem}
\newtheorem{proposition}[theorem]{Proposition}
\newtheorem{lemma}[theorem]{Lemma}
\newtheorem{corollary}[theorem]{Corollary}

\theoremstyle{definition}

\newtheorem{remark}[theorem]{Remark}

\numberwithin{theorem}{section} 
\numberwithin{equation}{section}


\newcommand{\ba}[1]{\begin{array}{#1}}
\newcommand{\ea}{\end{array}}

\newcommand{\be}{\begin{equation}}
\newcommand{\ee}{\end{equation}}
\newcommand{\bea}{\begin{eqnarray}}
\newcommand{\eea}{\end{eqnarray}}
\newcommand{\beann}{\begin{eqnarray*}}
\newcommand{\eeann}{\end{eqnarray*}}

\newcommand{\R}{\mathbb{R}}
\newcommand{\C}{\mathbb{C}}
\newcommand{\Z}{\mathbb{Z}}

\newcommand{\bH}{{\mathbb{H}}}


%
\newcommand{\cA}{\mathcal{A}}

\newcommand{\cE}{\mathcal{E}}



\newcommand{\nc}{\newcommand}

\nc{\al}{\alpha}
\nc{\bet}{\beta}
\nc{\G}{\Gamma}
\nc{\g}{\gamma}
\nc{\del}{\delta}
\nc{\lam}{\lambda}
\nc{\Om}{\Omega}
\nc{\Omt}{\tilde{\Omega}}
\nc{\ta}{\tau}
\nc{\w}{\omega}
\nc{\om}{\omega}
\nc{\io}{\iota}
\nc{\h}{\theta}
\nc{\z}{\zeta}
\nc{\s}{\sigma}
\nc{\Si}{\Sigma}
\nc{\Lam}{\Lambda}

\nc{\vphi}{\varphi}

\nc{\ut}{t} 
\newcommand{\an}{a^n}

\newcommand{\harm}{\Om^1_{\rm harm}}

\newcommand{\onef}{\Om^1}

\newcommand{\re}{\operatorname{Re}}
\newcommand{\im}{\operatorname{Im}}
\renewcommand{\Re}{\operatorname{Re}}
\renewcommand{\Im}{\operatorname{Im}}
\newcommand{\Null}{\operatorname{Null}}

\nc{\lra}{\leftrightarrow}
\nc{\ra}{\rightarrow}
\newcommand{\Ran}{{\rm Ran\,}}
\nc{\ran}{\rangle}
\nc{\lan}{\langle}

\newcommand{\one}{{\bf{1}}}
\nc{\bfone}{{\bf 1}}

\nc{\p}{\partial}
\nc{\pt}{\partial_t}
\nc{\ptt}{\partial_t^2}
\nc{\dt}{\partial_t}

\nc{\dA}{\nabla_A}
\newcommand{\n}{\nabla}

\newcommand{\DETAILS}[1]{}

\title{Ginzburg--Landau equations on Riemann surfaces of higher genus}
\author{D. Chouchkov\footnote{Dept. of Math., U. of Toronto, Toronto, ON, M5S 2E4, Canada}, N. M. Ercolani\footnote{Dept. of Math., U. of Arizona, Tucson, AZ, 85721-0089, USA}, S. Rayan\footnote{Dept. of Math. \& Stats., U. of Saskatchewan, Saskatoon, SK, S7N 5E6, Canada}, I. M. Sigal\footnote{Dept. of Math., U. of Toronto, Toronto, ON, M5S 2E4, Canada}}
\date{April 23, 2019} 

\pagestyle{myheadings}                         
\markboth{\hfill{Ginzburg--Landau equations on Riemann surfaces, \today}}{{Ginzburg--Landau equations on Riemann surfaces,  April 23, 2019 
}\hfill}
\begin{document}
\newcommand\numberthis{\addtocounter{equation}{1}\tag{\theequation}}

\maketitle

\abstract{We study the Ginzburg--Landau equations 
 on Riemann surfaces of arbitrary genus.  In particular,  we

\begin{itemize} 
\item construct explicitly  the (local moduli space of gauge-equivalent) solutions in the neighbourhood of the constant curvature ones; 
 
\item classify holomorphic structures on line bundles arising as solutions to the equations in terms of the degree, the Abel-Jacobi map, and symmetric products of the surface;   

\item determine the form of the energy and identify when it is below the energy of the constant curvature (normal) solutions.

\end{itemize} 
\medskip
 
Nous \'etudions les \'equations de Ginzburg-Landau d\'efinies sur des surfaces de Riemann de genre arbitraire. En particulier, 

\begin{itemize} 
\item  nous construisons explicitement l'espace des modules locaux des solutions (\'equivalentes par transformation de jauge) dans le voisinage des solutions de courbure constante; 

\item nous classifions les structures holomorphiques dans les fibr\'es en droites qui apparaissent comme solutions de ces \'equations, en fonction de leur degr\'e, de l'application d'Abel-Jacobi, et des produits sym\'etriques de surface; 

\item nous obtenons une expression pour l'\'energie et identifions dans quelles conditions elle est inf\'erieure � l'\'energie des solutions (normales) de courbure constante. 
\end{itemize}

\section{The Ginzburg--Landau Equations} \label{sec:GLE}

Let $X$ be a Riemann surface  with a complex structure given by a hermitian metric $h$ and let $E$ be  a smooth, unitary line bundle over $X$.  
The Ginzburg-Landau equations on $X$ involve a section, $\psi$, and a connection one-form $a$ on $E$  and  are written as
 \begin{align}\label{GLE} \begin{cases} 
 \Delta_a\psi &=  
 \kappa^2(|\psi|^2 - 1)\psi, \\
d^*d a &= 
 \Im(\overline{\psi}\nabla_a \psi),
\end{cases}\end{align}
where $\nabla_a$  and $-\Delta_a=\nabla_a^* \nabla_a$ are the covariant derivative and Laplacian associated with a unitary connection form $a$ (locally, $\nabla_a = d + ia$ ) and  $\kappa^2$ is the Ginzburg-Landau parameter. The adjoints here are taken with respect to the inner products on sections and bundle-valued one-forms, respectively, induced by fixing a smooth hermitian inner product, $k$ (which determines the $U(1)$-structure of $E$) on the fibers of $E$ and a hermitian metric, $h$ on $X$ (see e.g. \cite{Donaldson, Jost}).



Equations \eqref{GLE} lie at the foundation of the macroscopic theory of superconductivity. Soon after their birth, they migrated to other areas of condensed matter physics and then to particle physics. They gave the first example of gauge field theory, led to the Yang-Mills-Higgs equations and, together with the latter, formed a foundation 
 of the standard model of particle physics.  (For some background, see \cite{JT, MSu, Wit}.)

Equations \eqref{GLE} are the Euler-Lagrange equation of the Ginzburg-Landau energy functional \begin{align}\label{GLen}
	\cE (\psi, a, h) &= \|\nabla_a \psi\|^2  + \|d a\|^2 + \frac{\kappa^2}{2}\|(|\psi|^2-1)\|^2. 
\end{align}

\medskip

On $\R^2$ (the cylindrical geometry, in the physics literature), equations \eqref{GLE} are translation, rotation and gauge invariant (see \eqref{gauge-transf} below). However, for low 
 magnetic fields, its ground state - the solution with the lowest energy per unit area (see below) - turns out not to be gauge - translation invariant. This was discovered by A.A. Abrikosov who suggested that for Type II superconductors it has the symmetry of a lattice. 
 
 The Abrikosov's prediction was confirmed in experiments and was recognized by a Nobel Prize.  The corresponding state is called the Abrikosov, or vortex, lattice. 
As alluded above, it is the ground state for Type II superconductors. 
   This phenomenon is related to the one of the crystallization and proving the vortex lattice solutions are ground states is a major open problem. 

\medskip

The Abrikosov solutions can be reformulated as solutions of \eqref{GLE} on a flat torus. Hence, mathematically, it is natural to go one step further and consider \eqref{GLE} on an arbitrary smooth Riemann surface.  
(On the physics side, one can imagine superconducting thin membranes with surfaces of higher genera. In this case,  \eqref{GLE} would have to be modified, but the present mathematical theory would still apply.)

We fix a genus $g$ Riemann surface with a fixed homology basis (referred to as a marking).
Recall that  a hermitian metric, $h$, determines the Riemann metric, volume form (as the real and imaginary parts of $h$) and the complex structure (relating the first two). We allow $h$ to vary keeping the complex structure fixed within its conformal class. By a solution of \eqref{GLE} we understand the {\it triple} $(\psi, a, h)$, 
(or the equivalence classes - or moduli - of such triples related by the gauge transformation, see below).  


To fix ideas, in what follows, we allow variations of  Hermitian metrics only along a family, $h=r h', r>1,$ where $h'$ is an arbitrary fixed metric. 

To formulate our result, we need some notation and definitions:

The curvature of a  connection $a$ is defined as $F_a= d a$ and a  connection $a$ on $E$ is said to be of {\it constant curvature} iff 
$F_a$ is of the form $F_a  = 
 b\omega$,  where $b$ is a constant and $\omega$ is the symplectic {\it volume form} on $X$.  By the Chern-Weil relation (see \eqref{flux-quant} below), $b = \frac{2\pi n }{|X|}$, where $|X|$ denotes the total area of $X$;

 deg$(E)$ denotes the degree (a topological invariant with values in $\Z$) of $E$; 
 we assume $E$ has the degree $n$ and denote this topological bundle, unique up to homeomorphism, by $E_n$. 

   $\mathcal{A}_{cc, n}$ denotes the space of constant curvature unitary (with respect to $k$) connections on $E_n$; 

  $\mathscr{H}^r_n$ and $\vec{\mathscr{H}^r}$  denote the order $r$ Sobolev spaces of 
sections of $E_n$ and co-closed (i.e. $d^*\al=0$) one-forms, $\al\in \onef$ (for $r<1$, the derivative $d^*\al$ is defined in the weak sense);

$\mathcal{O}_1$ and $\mathcal{O}_2$ stand for error terms in the sense of the norms in $\mathscr{H}^2$ and $ \vec{\mathscr{H}^2}$, 
 respectively.

 The main results of this paper can be summarized as follows

\begin{theorem}\label{thm:GLE-RS-exist} 
Let $1 \leq n \leq g$ and
 $a^c \in \mathcal{A}_{cc, n}$ 
  be a regular value of the Abel-Jacobi map $\Phi$  (see \eqref{AJ-map} below). 
 Assume $r$ is close to $2\pi n/|X|$ and 
\begin{equation}  \label{nec-cond}
(r-b)(\kappa^2 -\kappa^2_c(a^c)) > 0. 
\end{equation}
 with $\kappa^2_c(a^c)$ defined below. Then  
the space, $H^1_{\rm hol}$, of holomorphic sections  of the holomorphic bundle corresponding to $a^c$ (see Theorems \ref{thm:bundle-holomorphiz} and \ref{thm:KM} below) has the dimension $1$; the Ginzburg--Landau equations \eqref{GLE} on $E_n$ have  the branch of solutions 
\begin{align}\label{sol-general'} (\psi_{s}, a_{s} , r_s) &= \big(s \phi + \mathcal{O}_1(s^3), a^c + \mathcal{O}_2(s^2), 2\pi n/|X| + \mathcal{O}(s^2)\big), \end{align}
where 
$  \phi=g {\bf\hat{\phi} }$, with $g$ a gauge transformation (constructed explicitly) and $\hat{\phi} \in H^1_{\rm hol}$,  
normalized as $\|\phi\|_{\mathscr{H}^2}^2/|X|=1$, and  $s\in \R$  is given by the equation, with $\beta(a^c)$ defined below,  
\begin{equation}    \label{s-r}
         s^2 
       = \frac{ \kappa^2(r-b)}{ \beta(a^c)(\kappa^2 -\kappa^2_c(a^c))} +O(\kappa^4(r-b)^2);     \end{equation}
 up to gauge equivalence, these are the only solutions in $\mathscr{H}^2 \times \vec{\mathscr{H}^2}\times \R$ in a small neighbourhood of the solution branch $\{(0, a^c,  r):  r >0\}$.  	
\end{theorem}

Note that \eqref{nec-cond} follows from \eqref{s-r}. 
Taking $s=0$ in \eqref{sol-general'} gives a normal solution $(0, a^c, h)$. In fact, it is well known, see e.g. \cite{Wells}, that 
  \begin{align}  \label{norm-sol} &(0, a, h) \text{ is a solution of 
   \eqref{GLE} on    $E_n$    iff }  a \in \cA_{cc, n} 
      \end{align}
 The solution $(0, a, h)$, where  $a$ is a constant curvature connection on $E$, will be called the {\it normal state} (irreducible solution in some mathematics literature).

 Now, we define the Abrikosov function $ \beta(a^c)$ and the parameter threshold $\kappa_c(a^c)$, $ a^c\in \mathcal{A}_{cc, n}$, used in the theorem, by: 
\begin{equation} \label{beta'}
    \beta(a^c) :=  \min_{\phi\in \Null \p_{a^c}'',\ \lan |\phi|^2 \ran=1} \lan |\phi|^4 \ran, 
\end{equation}
 where $\p_{a}''$ stands for the $(0, 1)-$part of  the unitary connection $\n_{a}$, and $\langle f \rangle:=\frac{1}{|X|}\int_{X}f$, 
 and 
  \begin{equation}  \label{kappa-c}
\kappa_c(a^c):= \sqrt{\frac{1}{2} \bigg(1 -\frac{1}{\beta(a^c)}\bigg)} 
\end{equation} 
Note $\kappa_c(a^c) < 1/\sqrt{2}$ and recall that $\kappa = 1/\sqrt{2}$ is the self-duality point (see e.g. \cite{JT}). 

$\beta(a^c)$ is invariant under the gauge transformations (see Proposition \ref{prop:gauge-sym} below) and therefore is in fact defined on the space of holomorphic structures on $E_n$ (see Theorem \ref{thm:KM} of Appendix \ref{sec:admissible-conn}).

Furthermore, if  $a^c \in \mathcal{A}_{cc, n}$ is a regular value of the Abel-Jacobi map $\Phi$, then $\Null \p_{a^c}''$ is one-dimensional (see Proposition \ref{prop:classif-adm-con}) and 
\begin{equation} \label{beta}\beta(a^c) :=   \lan |\phi|^4 \ran,\end{equation} with $\phi\ \in \Null \p_{a^c}''$ and normalized as $ \lan |\phi|^2 \ran=1$.

For a review and references in the torus case see \cite{S}.  For general compact Riemann surfaces, the existence of solutions of  \eqref{GLE} for $\kappa=1/\sqrt 2$ (the (anti)self-dual case) was shown in 
\cite{Bradlow, Bradlow2, BradlDask, Bradlow3, GarPrada} (see also \cite{MRi}, the results of \cite{Bradlow, Bradlow2, BradlDask, Bradlow3} apply actually to the Yang-Mills-Higgs equations). 

Furthermore, \cite{Nagy} (see also \cite{Qing}) gives the existence proof by the variational technique, which by its nature is not constructive. 

Theorem \ref{thm:GLE-RS-exist} reveals that the structure of the solution depends crucially on whether the corresponding line bundles have holomorphic sections and, if they do, on the dimension of the space they span.

Our next result addresses the question of the energy of the bifurcating solution. 
\begin{theorem}\label{thm:ener-asymp} Let $b = \frac{2\pi n }{|X|}$. We have, under the conditions of Theorem \ref{thm:GLE-RS-exist}, 
 \begin{equation}     \label{Eexpan}
    \frac{1}{|X|}    \mathcal{E} (\psi_{s}, a_{s}) =         \frac{\kappa^2}{2} +(2\pi n)^2    - \frac12 \frac{\kappa^4(r-b)^2}{\beta(a^c) (\kappa^2 -  \kappa_c^{2}(a^c))}
          +  O(\kappa^6(r-b)^6),
           \end{equation}
 Note that $ \frac{\kappa^2}{2} +(2\pi n)^2$ is the energy of the  normal state $(0, a^1)$ per unit area.   \end{theorem}

\begin{corollary}\label{cor:ener-beta} 
  We have for $s$ sufficiently small
 \begin{align}     \label{Ebeta'}
& \mathcal{E}(\psi_{s}, a_{s})  < \mathcal{E}(0, a^c)\ \Longleftrightarrow \ 
  \kappa^2 > \kappa_c^{2}(a^c),\\
  &   \inf\mathcal{E}_{E}(\psi, a)  <  \inf\mathcal{E}_{E'}(\psi', a') \ \Longleftrightarrow \ 
 \beta(a^c) <\beta((a^c)').      \end{align}
\end{corollary}

We say that a solution $u_*=(\psi_{*}, a_{*})$ is {\it stable} iff Hess$ \mathcal{E}(u_*)\ge 0$.  
It follows from our results on the linearized problem (see Proposition \ref{prop:Delta_a-part_a} and Eq \eqref{dF-norm}), see also \cite{Nagy}, that the solution $(0, a, h)$, where  $a$ is a constant curvature connection on $E$ with $F_a=  \frac{2\pi n }{|X|}\omega$, is stable iff $ |X|\le  2\pi n/\kappa^2$.

Following the Hessian computations in \cite{ST1}, one can show that $(\psi_{s}, a_{s}, r_s)$ is stable iff $r_s\ge 2\pi n/|X|$.


\DETAILS{The above theorem is the first result for general Riemann surfaces and the general $\kappa$. It generalizes the existence results for the tori (see \cite{S} for a review and references in the torus case).\footnote{As were in the process of posting the present paper, we became aware of 
 the interesting paper \cite{Nagy},  which contains the existence proof by the variational technique. There, the author, referring to our results, treats a more general class of complex surfaces, but as per usual with variational methods, less precise information is obtained regarding the solutions.} 

Usually the variational technique can treat a more general class of elliptic problems (say, line bundles in our case), while constructive techniques require more restrictive assumptions and are harder to implement. So the question is why bother?

One bothers if one wants to understand the 
actual structure of solutions or their uniqueness, the facts of ultimate importance in problems coming from sciences. For instance, in our problem, the variational methods say nothing about the 2nd critical magnetic  field where the transition between the normal and superconducting states takes place or whether the minimizers converge to the normal states as $r\ra 1$. 

Our results reveal that the structure of the solution depends crucially on whether the corresponding line bundles have holomorphic sections and, if the do, on the dimension of the space they span. This subtle property cannot be detected by the variational technique.

On the other hand, the constructive techniques do not tell whether the constructed solutions are global minimizers or not. (The local minimization property is within an easy reach of these methods.)}

We discuss Theorem \ref{thm:GLE-RS-exist} and its proof. 
  We begin with a key fact that \eqref{GLE} is a gauge theory. 
In particular, it has the gauge symmetry, which can be formulated as following proposition which can be checked directly.

\begin{proposition}\label{prop:gauge-sym}
If $g: E \rightarrow E'$ is a $U(1)$-equivariant isomorphism of smooth line bundles over a Riemann surface $X$ and $(\psi, a)$ solve \eqref{GLE} on $E$, then \begin{align}\label{gauge-transf} T_g^{\rm gauge} (\psi, a)= (g\psi,  a + i g^{-1}d g)\end{align} 
 is a section and connection pair on $E'$ and it solves \eqref{GLE} on $E'$.\footnote{If general, if $g: E \rightarrow E'$ is an isomorphism of smooth line bundles over a Riemann surface $X$, then for any section and connection pair, $(\psi, a)$ on $E$, $(g\psi , a+ i g^{-1}dg)$
 is a section and connection pair on $E'$.}  
\end{proposition}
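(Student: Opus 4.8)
The plan is to treat this as the purely computational statement it is, working in local unitary trivialisations of $E$ and $E'$ over a common coordinate patch $U\subset X$. There $\nabla_a = d + ia$ and $\nabla_{a'} = d + ia'$ with $a' := a + ig^{-1}dg$, and the bundle map $g$ acts by multiplication by a nowhere-vanishing function; this function has modulus one precisely because $g$ is assumed $U(1)$-equivariant (equivalently, $g$ is an isometry for the hermitian structures), so $\bar g = g^{-1}$ pointwise. The first step is to record the single covariance identity from which everything else follows:
\[ \nabla_{a'}(g\psi) = g\,\nabla_a\psi, \]
obtained by the one-line computation $d(g\psi) + ia'(g\psi) = (dg)\psi + g\,d\psi + iag\psi + i(ig^{-1}dg)g\psi = g(d\psi + ia\psi)$, using that $g$ is a scalar, so $g^{-1}(dg)g = dg$.

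From here I would draw the pointwise and operator consequences. Since $|g|\equiv 1$, we have $|g\psi|^2 = |\psi|^2$; from $g^{*} = \bar g = g^{-1}$ together with the identity above, taking adjoints gives $\nabla_{a'}^{*} = g\,\nabla_a^{*}\,g^{-1}$ on one-forms, hence $\Delta_{a'}(g\psi) = g\,\Delta_a\psi$. For the magnetic terms, $d a' = d a$ because $g^{-1}dg$ is closed (locally a logarithmic differential), so $d^{*}d a' = d^{*}d a$, while $\Im\!\bigl(\overline{g\psi}\,\nabla_{a'}(g\psi)\bigr) = \Im\!\bigl(\bar g g\,\overline{\psi}\,\nabla_a\psi\bigr) = \Im\!\bigl(\overline{\psi}\,\nabla_a\psi\bigr)$. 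Substituting into \eqref{GLE} written for the pair $(g\psi, a')$ on $E'$, the first equation becomes $g\,\Delta_a\psi = g\bigl[\kappa^2(|\psi|^2-1)\psi\bigr]$, i.e. $g$ times the first equation for $(\psi,a)$; the second becomes $d^{*}d a = \Im(\overline{\psi}\nabla_a\psi)$, exactly the second equation for $(\psi,a)$. Both hold by hypothesis, so $(g\psi,a')$ solves \eqref{GLE} on $E'$.

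The only step that needs any genuine care — and it is essentially the content of the footnote — is verifying that $(g\psi, a')$ is a legitimate section-and-connection pair on $E'$ in the first place: one must check that the locally defined one-forms $a'$ glue to a global connection on $E'$, which follows from the cocycle relation among the transition functions of $E$, those of $E'$, and the local representatives of $g$, together with the standard transformation law $a_{\beta} = a_{\alpha} + i g_{\alpha\beta}^{-1}dg_{\alpha\beta}$ for connection one-forms; this part uses no equivariance. (The identities $\nabla_{a'}^{*} = g\nabla_a^{*}g^{-1}$ and $\Delta_{a'}(g\psi) = g\Delta_a\psi$ are first obtained on smooth sections and then extended to the relevant Sobolev classes by density, which is routine.) I do not expect a substantial obstacle anywhere; the real point of the proposition is that the $U(1)$-equivariance of $g$ is exactly what is needed to upgrade the general statement of the footnote ("$(g\psi,a')$ is a pair on $E'$") to the claim that $(g\psi,a')$ again solves \eqref{GLE}.
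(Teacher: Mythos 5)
Your computation is correct and is precisely the direct check the paper has in mind: the authors state that the proposition ``can be checked directly'' and omit the verification, and your covariance identity $\nabla_{a'}(g\psi)=g\nabla_a\psi$ together with $|g|\equiv 1$, $d(g^{-1}dg)=0$, and the invariance of the supercurrent is the standard way to carry it out. No gaps; the remark that only the final step (solving the equations) uses unitarity, while the section-and-connection-pair claim does not, matches the paper's footnote exactly.
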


For $E'=E$ maps \eqref{gauge-transf} 
are called the gauge transformations. We use the latter term for general $g: E\ra E'$. 
For $E'=E$, the above means that a single solution is simply a representative of an infinite dimensional equivalence class of solutions. Therefore when we describe a solution with some property, we mean that this property holds up to a gauge equivalence  (with $E'=E$).

 \medskip

Topologically equivalent classes of smooth line bundles over a Riemann surface are determined by their degree with values in $\Z$. However, a bundle, $E\equiv E_n$, of a given degree $n$ may be given a variety of distinct  holomorphic structures. 

 A {\it unique equivalence class of holomorphic structures} on $E_n$ is determined by $\p_{a^c}''$ 
   (see Appendix \ref{sec:admissible-conn},  Theorem \ref{thm:KM}) and $a^c \in \mathcal{A}_{cc, n}$.

 We derive Theorem \ref{thm:GLE-RS-exist} from some general facts from the theory of Riemann surfaces presented below and the following key result:

\begin{theorem}\label{thm:GLE-RS-exist-cond} 
Let $a^c \in \mathcal{A}_{cc, n}$ and assume the space $\Null\p_{a^c}''$ is one-dimensional. 
Then the statement of Theorem \ref{thm:GLE-RS-exist} is true.
\DETAILS{Let $h=r h'$, where $r\ge 1$ and $h'$ is a fixed hermitian metric. Then 
the Ginzburg--Landau equations \eqref{GLE} on $E$ have solutions of the form 
\begin{align}\label{sol-general'} (\psi_{s}, a_{s} , r_s) &= \big(s \phi + \mathcal{O}_1(s^3), a^c + \mathcal{O}_2(s^2), 1 + \mathcal{O}(s^2)\big). \end{align}
where  $s\in \R$ is sufficiently small, $ \phi$ is a solution to $\p_{a^c}''\phi=0$ and $\mathcal{O}_1$ and $\mathcal{O}_2$ are in the sense of the norms in $\mathscr{H}^2$ and $ \vec{\mathscr{H}^2}$, 
 respectively. 
 Up to the gauge equivalence, these are the only solutions in $\mathscr{H}^2 \times \vec{\mathscr{H}^2}\times \R$ in a small neighbourhood of the solution branch $\{(0, a,  r):  r \ge 1\}$.}  	
\end{theorem}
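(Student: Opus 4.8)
Theorem \ref{thm:GLE-RS-exist-cond} will be proved by a Lyapunov--Schmidt reduction, bifurcating from the normal branch $\{(0,a^c,r):\ r\ge 1\}$ — which does consist of solutions, since $a^c$ has curvature proportional to the volume form of $rh'$, so that $d^*da^c=0$ — with the size $s$ of $\psi$ playing the role of bifurcation parameter.

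\emph{Set-up.} Write $a=a^c+\al$ and, using the gauge symmetry of Proposition \ref{prop:gauge-sym}, impose the Coulomb condition $d^*\al=0$, so $\al\in\vec{\mathscr{H}^2}$; then \eqref{GLE} becomes a smooth map $G(\psi,\al,r)\colon\mathscr{H}^2_n\times\vec{\mathscr{H}^2}\times\R\to\mathscr{H}^0_n\times\vec{\mathscr{H}^0}$ (smoothness uses the two-dimensional embedding $\mathscr{H}^2\hookrightarrow L^\infty$ and the fact that $\mathscr{H}^2$ is an algebra), vanishing on the trivial branch $(0,0,r)$. There is a residual $\Z_2$-symmetry $(\psi,\al,r)\mapsto(-\psi,\al,r)$, coming from the $\psi\mapsto-\psi$ gauge transformation, which makes the first equation of \eqref{GLE} odd and the second even in $\psi$; this forces $\psi_s$ odd and $\al_s,\,r_s$ even in $s$. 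The linearization $D_{(\psi,\al)}G(0,0,r)$ is block-diagonal: on $\psi$ it is $L_r:=\nabla_{a^c}^*\nabla_{a^c}-\kappa^2$, with the covariant Laplacian taken in the metric $rh'$, and on $\al$ it is $d^*d$ restricted to co-closed forms. By the Kähler (Weitzenböck) identity for the constant-curvature connection, $\nabla_{a^c}^*\nabla_{a^c}=2(\p_{a^c}'')^*\p_{a^c}''+{*}F_{a^c}$, and ${*}F_{a^c}=2\pi n/(r|X|_{h'})$ by the Chern--Weil relation \eqref{flux-quant}, where $|X|_{h'}$ is the area of $X$ in $h'$; hence on $\Null\p_{a^c}''$ the operator $\nabla_{a^c}^*\nabla_{a^c}$ acts as this scalar, which decreases through $\kappa^2$ at the critical scale, normalized so that $r_*=1$. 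Therefore $\ker L_1=\Null\p_{a^c}''$, one-dimensional by hypothesis and spanned by $\phi$ (the normalized section corresponding under Theorem \ref{thm:KM} to a holomorphic section of the holomorphic bundle attached to $a^c$), while $\ker\bigl(d^*d|_{\text{co-closed}}\bigr)=\harm$, the harmonic one-forms — precisely the tangent directions to $\cA_{cc,n}$.

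\emph{Reduction.} Let $P$ be the $L^2$-projection onto $\phi$, $Q=1-P$, and split $\psi=s\phi+\eta$ with $\eta=Q\eta$ and $s\in\R$ (the residual phase fixed), and $\al=\bet+\g$ with $\bet\in\harm$ and $\g$ co-exact. The ``range'' equations — $Q$ of the first equation of \eqref{GLE}, and the co-exact component of the second — can be solved for $(\eta,\g)=(\eta,\g)(s,\bet,r)$ by the implicit function theorem, since $L_r|_{\Ran Q}$ and $d^*d|_{\text{co-exact}}$ are boundedly invertible for $r$ near $1$; the $\Z_2$-symmetry gives $\eta=\cO(s^3)$, and since the leading current $\Im(\bar\phi\nabla_{a^c}\phi)=\tfrac12{*}\,d|\phi|^2$ is itself co-exact, $\g=\cO(s^2)$. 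Two equations remain: the $\phi$-component of the first equation of \eqref{GLE}, and the harmonic ($\harm$-)component of the second. The latter is $\langle\Im(\bar\psi\nabla_a\psi),\bet'\rangle=0$ for all $\bet'\in\harm$; it is consistent because $\langle d^*da,\bet'\rangle=\langle da,d\bet'\rangle=0$, and the positive-definite weighted Gram system it imposes (matrix $\int_X|\phi|^2\langle\bet_i,\bet_j\rangle$) determines $\bet=\cO(s^2)$ uniquely, keeping $a$ within $\cO_2(s^2)$ of $a^c$. Substituting back leaves the scalar equation obtained by projecting the first equation of \eqref{GLE} onto $\phi$, which by the $\Z_2$-symmetry equals $s$ times a smooth function of $(s^2,r)$ of the form $\bigl(\kappa^2-\tfrac{2\pi n}{r|X|_{h'}}\bigr)+c_3\,s^2+\cO(s^4)$, with $c_3$ a computable Abrikosov-type constant. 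Since $\p_r$ of the first bracket is nonzero at $r=1$, the implicit function theorem yields $r=r_s=1+\cO(s^2)$, and with it the branch \eqref{sol-general'}; the identification $\phi=g\hat\phi$ with $\hat\phi$ holomorphic comes from Theorems \ref{thm:bundle-holomorphiz} and \ref{thm:KM}.

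\emph{Uniqueness, and the main obstacle.} Any solution of \eqref{GLE} in $\mathscr{H}^2\times\vec{\mathscr{H}^2}\times\R$ in a small neighbourhood of $\{(0,a^c,r):r\ge 1\}$ can, after a gauge transformation, be put in Coulomb gauge with $\|\psi\|$ small; projecting as above, it satisfies the same range and reduced equations, so the uniqueness built into the implicit function theorem and into the scalar equation forces it onto the constructed family (up to gauge). The step I expect to be the main obstacle is controlling the two intertwined degeneracies: the scaling parameter $r$ enters $L_r$ both through the Laplacian and, via ${*}F_{a^c}$, through the zeroth-order term, so one must track this dependence carefully to pin down $r_*=1$ and the signs and non-vanishing of $\p_r$ and of $c_3$; and the $2g$-dimensional kernel $\harm$ must be reduced simultaneously with $\Null\p_{a^c}''$, which requires both the consistency argument for the $\harm$-component and uniform-in-$r$ bounded invertibility of the complementary operators in the Sobolev topology. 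The remainder is a standard, if lengthy, bifurcation computation.
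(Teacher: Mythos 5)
Your proposal follows essentially the same route as the paper: a Lyapunov--Schmidt reduction about the normal branch, with the kernel of the linearization identified as $\Null\p_{a^c}''\times\harm$ via the Weitzenb\"ock identity ${\p_a''}^*\p_a''=\tfrac12(-\Delta_a-*F_a)$ (the paper's Proposition \ref{prop:Delta_a-repr}), the harmonic component of the second equation killed at leading order by the co-exactness of $\Im(\bar\phi\nabla_{a^c}\phi)=*d|\phi|^2$ (Proposition \ref{prop:Jn-co-exact}), the positive-definite Gram matrix $B_{ij}=\lan\omega_j,|\phi|^2\omega_i\ran$ pinning the harmonic part of $\al$ at $O(s^2)$, and the implicit function theorem in the metric parameter closing the scalar reduced equation. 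The differences (using the $\Z_2$ rather than the $U(1)$ symmetry to obtain the parities in $s$, and keeping $r$ inside the Laplacian rather than rescaling to $\mu=\kappa^2 r$ as in \eqref{GLE'}) are cosmetic.

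There is, however, one concrete gap. The second Ginzburg--Landau equation $d^*da=J$ with $J=\Im(\bar\psi\nabla_a\psi)$ is an identity of one-forms, and under the Hodge decomposition $\Om^1=d\Om^0\oplus d^*\Om^2\oplus\harm$ its left-hand side is co-exact. You account for the co-exact component (a range equation solved by the implicit function theorem) and for the harmonic component (a bifurcation equation), but never for the exact component: the equation forces $\lan J,df\ran=0$ for every function $f$, i.e.\ $d^*J=0$, an infinite-dimensional constraint that cannot be absorbed into the finite-dimensional reduction. Since your map $G$ takes values in $\vec{\mathscr{H}^0}$, the co-closed forms, you are in effect solving only the projected equation $d^*d\al=P_{\rm co-clo}J$, and a zero of $G$ is not yet a solution of \eqref{GLE}. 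The paper closes this loop with Proposition \ref{prop:J-co-closed} and Corollary \ref{cor:solGLE}: a Noether-type computation, differentiating the gauge-invariant energy along the family $g_s=e^{isf}$, shows that $d^*J=0$ holds automatically once the first equation $-\Delta_a\psi+(\kappa^2|\psi|^2-\mu)\psi=0$ is satisfied. You need to state and prove this (or an equivalent conservation law) before the constructed branch can be claimed to solve the full system; the co-exactness of $\Im(\bar\phi\nabla_{a^c}\phi)$ that you do verify covers only the leading $O(s^2)$ term of $J$, not the full nonlinear current.
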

 Theorem \ref{thm:GLE-RS-exist-cond}, and therefore  Theorem \ref{thm:GLE-RS-exist}, could be readily generalized to the case when  the space $\Null\p_{a^c}''$ is odd-dimensional.

We now consider the dimension of the space $\Null\p_{a^c}''$.  
 On the first step, we use the following result, which is a constructive version 
  of known results (see Appendix \ref{sec:admissible-conn}, Theorem \ref{thm:KM} (i)): 
\begin{theorem} \label{thm:bundle-holomorphiz}
There is a smooth, invertible map from  $U(1)$ bundles with unitary connections  onto holomorphic bundles with  complex connections and compatible hermitian metrics, which maps the former connections into the latter ones.  This map is constructed explicitly.  
\end{theorem}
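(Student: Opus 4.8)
The plan is to prove this as the Riemann-surface case of the Koszul--Malgrange correspondence, rendered constructive: build the map locally by solving a $\bar{\p}$-equation explicitly, then patch. Fix a $U(1)$ line bundle $E$ with hermitian fibre metric $k$ and a $k$-unitary connection $a$, so that locally $\n_a = d + ia$ with $a$ a real one-form. Decompose $\n_a = \p'_a + \p''_a$ into its $(1,0)$- and $(0,1)$-parts. Because $X$ has complex dimension one there are no nonzero $(0,2)$-forms, so $(\p''_a)^2 = 0$ automatically; this is exactly the integrability needed for $\p''_a$ to define a holomorphic structure, whose local holomorphic sections are by definition the elements of $\Null\,\p''_a$.

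The constructive core is the explicit production of local holomorphic frames. In a coordinate chart with holomorphic coordinate $z$ on a disk $D$, write the $(0,1)$-component of $ia$ as $\al\,d\bar z$ with $\al$ smooth (or in the relevant Sobolev class). One solves, on a slightly smaller disk, the scalar equation $\p_{\bar z} u = -\al$ by the Cauchy--Pompeiu formula $u(z) = -\tfrac1\pi\iint_D \tfrac{\al(w)}{z-w}\,dA(w)$ (applied to a cutoff of $\al$), and sets $g = e^{u}$, a smooth nowhere-vanishing complex-valued function. Then $g$ conjugates $\p''_a$ to the standard Dolbeault operator, $g^{-1}\,\p''_a(g\,s) = \bar{\p} s$, so $g^{-1}$ is a local holomorphic frame. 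On the overlap of two such charts the change of frame $\lam$ satisfies $\bar{\p}\lam = 0$ by the same computation, hence is holomorphic, and the resulting cocycle defines a holomorphic line bundle $L$ over $X$ with Dolbeault operator $\bar{\p}_L = \p''_a$. Finally $\n_a$ is a complex connection on $L$ whose $(0,1)$-part equals $\bar{\p}_L$ and which is $k$-unitary by hypothesis; since the Chern connection of $(L,k)$ is the unique connection with both properties, $\n_a$ \emph{is} that Chern connection and $k$ is a compatible metric. This defines the map $(E,k,a)\mapsto (L,\bar{\p}_L,k,\n_a)$.

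Its inverse sends a holomorphic line bundle with hermitian metric to the underlying smooth bundle together with its (smooth) Chern connection, which is automatically unitary; the two assignments are mutually inverse because in both directions the data are pinned down by the identity ``$(0,1)$-part of the connection $= \bar{\p}_L$'' together with unitarity, which also gives injectivity and surjectivity. Smoothness in the pair $(k,a)$ follows since the Cauchy transform is bounded (indeed smoothing) on the function spaces in play and $u\mapsto e^u$ is smooth, so the local frames, the transition cocycle, and hence $\bar{\p}_L$ and the Chern connection depend smoothly on the input; the same uniqueness of the Chern connection makes the global holomorphic bundle independent of the chart choices up to holomorphic gauge. The step I expect to require the most care is this gluing-and-smoothness bookkeeping --- arranging that the explicit local solutions $g$ can be chosen to vary smoothly with the connection, and that compatibility of $k$ is manifestly preserved --- rather than any single analytic estimate; the uniqueness characterization of the Chern connection is what keeps that bookkeeping under control.
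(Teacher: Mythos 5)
Your proof is correct, and it rests on the same analytic core as the paper's: produce a complex-valued gauge transformation $g=e^{u}$ with $g^{-1}\p_a''\,g=\bar{\p}$ by solving a scalar $\bar{\p}$-equation, the integrability $(\p_a'')^2=0$ being automatic in complex dimension one. The globalization, however, is genuinely different. The paper lifts everything to the universal cover $\bH$, solves the Dolbeault equation once globally on the simply connected cover (Proposition \ref{prop:dolbeault}), and then verifies by direct computation that the induced automorphy factor $\rho'(\gamma)=\gamma^*(g^{-1})\rho(\gamma)g$ is holomorphic (Proposition \ref{prop:rho'}), so that $g$ descends to $X$ and carries $\n_a$ to a holomorphic connection on $E'=\tilde E'/\rho'$. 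You instead stay on $X$, solve the $\bar{\p}$-equation chart by chart via the Cauchy--Pompeiu kernel, and patch with a \v{C}ech cocycle whose holomorphy is checked by the same conjugation computation. The paper's route meshes with the automorphy-factor machinery used throughout (and yields the single closed-form solution $\tilde g=y^{b}$ for the constant curvature connection, Proposition \ref{prop:DeltaA-repr}), at the cost of invoking uniformization; your route works verbatim on any compact Riemann surface without the Fuchsian description, and your explicit appeal to the uniqueness of the Chern connection gives a cleaner account of invertibility and of why compatibility of the hermitian metric is preserved --- points the paper delegates to Theorem \ref{thm:KM} and Remark \ref{rem:Pic}. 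The one place to be slightly more careful than your sketch suggests is the dependence of the Cauchy--Pompeiu solution on the choice of cutoff: different cutoffs change $u$ by a holomorphic function, which is harmless for the holomorphic structure but should be quotiented out (as you do, implicitly, via the Chern-connection characterization) before claiming the map itself is well defined and smooth.
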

 We give a hands on proof of this theorem  in Appendix \ref{sec:bundle-holomorphiz}.

By  Theorem \ref{thm:bundle-holomorphiz}, 
the space $\Null\p_{a^c}''$ is isomorphic to the space of  holomorphic sections of the holomorphic bundle corresponding to $a^c$. 
We say a holomorphic structure, or its corresponding connection, $a^c$, is {\it admissible} if the kernel of $\p_{a^c}''$ is exactly one-dimensional.\footnote{In the physics literature such connections are called {\it non-degenerate}.} We will give a complete classification of the space of admissible connections including
\begin{enumerate}
\item determinaion of its degree range $n$  (Proposition \ref{prop:classif-adm-con}); 
\item its description as an analytic moduli space  (Corollary \ref{cor:Null-one-d});
\item explicit formulas for the elements of  $\Null\p_{a^c}''$ in terms of normalized differentials of the third kind  (Appendix \ref{sec:Null-expl}). 
\end{enumerate}

This detailed classification is based entirely on classical results, though their application here to the Ginzburg-Landau equations is novel. This will be explained below.  
We now state our principal classification. 
\begin{proposition}\label{prop:classif-adm-con} 
The following statements give a complete classification of admissible connections and the possible degrees of their associated line bundles. 

(i) For either $n < 0$ or $n > g$, there are no admissible holomorphic structures $a_c$. 

(ii) For $1 \leq n \leq g$, a holomorphic structure $a^c \in \mathcal{A}_{cc, n}$ 
  is admissible if and only if it corresponds to a regular value of the Abel-Jacobi map $\Phi$ (see \eqref{AJ-map} below) restricted to $X^{(n)}$. 

(iii) For $n = 0$, only the trivial bundle supports admissible connections and $\phi$ in this case may be gauge transformed to a unitary constant, $c$ with $|c| = 1$. 
\end{proposition}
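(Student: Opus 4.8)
\emph{Proof strategy.} The plan is to reduce the whole statement to classical facts about holomorphic line bundles on $X$. By Theorem~\ref{thm:bundle-holomorphiz}, the space $\Null\,\p_{a^c}''$ is isomorphic to $H^0(X,L)$, the space of global holomorphic sections of the holomorphic line bundle $L=L_{a^c}$ of degree $n$ determined by $a^c$; and, by the classical correspondence (Hodge theory on $X$; see Theorem~\ref{thm:KM}), the assignment $a^c\mapsto L_{a^c}$ descends to a bijection between $\cA_{cc,n}$ modulo gauge and $\operatorname{Pic}^n(X)\cong\operatorname{Jac}(X)$. Thus a connection $a^c$ is admissible if and only if $h^0(L_{a^c})=1$, and the proposition becomes the question of which degree-$n$ line bundles have a one-dimensional space of holomorphic sections.

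For part~(i) and the first assertion of part~(iii) I would use Riemann--Roch, $h^0(L)-h^1(L)=n-g+1$, together with Serre duality $h^1(L)=h^0(K\otimes L^{-1})$ ($\deg K=2g-2$): if $n<0$, a nonzero section would give an effective divisor of negative degree, so $h^0(L)=0$ and no $a^c$ is admissible; if $n>g$, then $h^1(L)\ge0$ forces $h^0(L)\ge n-g+1\ge2$, so again no $a^c$ is admissible; if $n=0$, a nonzero holomorphic section of a degree-$0$ bundle vanishes nowhere, hence trivializes $L$, so $h^0(L)=1$ forces $L\cong\cO$, and conversely $h^0(\cO)=1$. For the statement about $\phi$ in part~(iii), note that the constant-curvature connection on a degree-$0$ bundle has $F_{a^c}=\tfrac{2\pi n}{|X|}\,\omega=0$, i.e.\ it is flat; trivializing $L\cong\cO$ and using the residual gauge freedom to bring $a^c$ to the form with $\p_{a^c}''=\bar\p$, one sees that $\phi$ is a constant, and the Ginzburg--Landau equation $\Delta_a\psi=\kappa^2(|\psi|^2-1)\psi$ restricted to the branch forces $|\psi|$ constant, so that a constant-modulus gauge transformation normalizes $\phi$ to a unitary constant $c$ with $|c|=1$.

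The substance is part~(ii), for $1\le n\le g$. Here $h^0(L)=1$ is equivalent to the conjunction of two conditions: $(a)$ $L$ is effective, $L=\cO(D)$ for some $D\in X^{(n)}$, so that $L$ lies in the image $W_n:=\Phi(X^{(n)})$ of the Abel--Jacobi map; and $(b)$ the complete linear system $|D|=\bbP\big(H^0(\cO(D))\big)=\Phi^{-1}(L)$ is a single reduced point. I would then invoke the classical description of the differential of $\Phi$ on $X^{(n)}$: identifying $T_{\Phi(D)}\operatorname{Jac}(X)$ with $H^0(X,K)^*$ by Serre duality, the codifferential of $\Phi$ at $D$ is the restriction map $H^0(K)\to H^0(K)/H^0(K\otimes\cO(-D))$, of corank $h^0(K\otimes\cO(-D))=h^1(\cO(D))$, whence $\operatorname{rank}\,d\Phi|_D=g-h^1(\cO(D))=n+1-h^0(\cO(D))$ by Riemann--Roch. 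Since $D$ is effective, $h^0(\cO(D))\ge1$, so this rank is at most $n=\dim X^{(n)}$, with equality exactly when $h^0(\cO(D))=1$. Hence if $h^0(L)=1$ then $\Phi^{-1}(L)$ is the single point $D$ at which $d\Phi$ has maximal rank, so $L$ is a regular value of $\Phi|_{X^{(n)}}$; conversely if $h^0(L)\ge2$ then every divisor in $|L|$ is a critical point of $\Phi$, so $L$ is a critical value. This yields the equivalence ``admissible $\iff$ regular value'' in part~(ii).

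I expect the main obstacle to be the rank computation for $d\Phi$ in part~(ii): this is the geometric, Serre-dual form of Riemann--Roch, and making it precise requires a careful identification of $T_DX^{(n)}$, $T_{\Phi(D)}\operatorname{Jac}(X)$ and the map between them, including the case of non-reduced divisors $D$. A secondary point requiring care is the bookkeeping at the level of $\operatorname{Pic}^n(X)$: bundles lying off the image $W_n$ of the Abel--Jacobi map have empty $\Phi$-fibre and are non-effective (hence $h^0=0$, non-admissible), so the classification in part~(ii) is naturally phrased relative to $W_n$.
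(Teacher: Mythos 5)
Your proposal is correct, and its skeleton coincides with the paper's: reduce admissibility to $h^0(L_{a^c})=1$ via Theorem \ref{thm:bundle-holomorphiz} and the correspondence of Theorem \ref{thm:KM}; rule out $n<0$ by effectivity of the divisor of a holomorphic section; rule out $n>g$ by Riemann's inequality $h^0\ge n-g+1>1$; handle $n=0$ by noting a nowhere-vanishing holomorphic section trivializes the bundle. Where you differ is in part (ii): the paper simply cites Abel's theorem and \cite{FarKra} III.11.11--12 for the statement that the singular points of $W_n$ are exactly the images of divisors with $h^0>1$, whereas you actually prove this classical input by computing $\operatorname{rank} d\Phi|_D = g - h^1(\cO(D)) = n+1-h^0(\cO(D))$ via Serre duality and Riemann--Roch, so that maximal rank $n$ at every point of the fibre $|D|=\Phi^{-1}(L)$ (together with the fibre being a single point) holds precisely when $h^0=1$. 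This buys self-containedness at the cost of the careful identification of $T_D X^{(n)}$ and the codifferential that you yourself flag as the main obstacle; the paper instead gives a hands-on proof only in the case $n=1$ (Theorem \ref{thm:AL-exist}(iii)) and explicit theta-function descriptions for $n=g,\,g-1$. You also correctly note a point the paper glosses over: for $n<g$ a non-effective bundle lies off $W_n$ and is vacuously a ``regular value'' in the Sard sense while having $h^0=0$, so the equivalence in (ii) must be read relative to $W_n$ (i.e., as picking out the smooth points of the image variety), which is how the paper uses it in Corollary \ref{cor:Null-one-d}.
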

In the first statement, $n < 0$ is ruled out because if  a line bundle 
 had a holomorphic section, its divisor of zeroes would be of non-negative degree which is the same as the degree, $n$, of the bundle. 
Hence, one must have $n \geq 0$. 


 For $n > g$, Riemann's inequality states that $h^0(X,E)$ $:=\dim H^0(X,E)$ satisfies $h^0(X,E) \geq n - g + 1 > 1$ which is not admissible. Hence.  one must have $n \leq g$. 

The second statement follows from Abel's theorem  as discussed in Appendix \ref{sec:admissible-conn}. For the last statement, meromorphic sections of a degree zero bundle have the same number of zeroes as poles.  Hence a global holomorphic section has no zeroes and so the bundle must be trivializable.


Let $X^{(n)}$ denote the $n$-fold symmetric product of $X$ with itself (this is a smooth, compact complex manifold, in fact a projective algebraic variety, of complex dimension $n$).  The main ingredient in the proof of Proposition \ref{prop:classif-adm-con} is the {\it Abel-Jacobi map}, $\Phi$, 
extended naturally to 
 $X^{(n)}$ and defined there as 
\begin{align} \label{AJ-map} 
&\Phi: X^{(n)} \to  Jac(X) := \mathbb{C}^g/\Lambda, \notag\\
& P_1 + \cdot + P_n  \to  \sum_{k=1}^n \int_{P_0}^{P_k} \vec{\zeta}, 
\end{align}
where $\vec{\zeta} = (\zeta_1, \dots, \zeta_g)$ is a basis for the space of holomorphic differentials on $X$, normalized with respect to a canonical homology basis, $a_1, \dots, a_g; b_1, \dots, b_g$, meaning $\oint_{a_j} \zeta_k = \delta_{jk}$, and     $\Lam$ is the rank $2g$ lattice generated by the  periods, $\delta_{jk}$ and $\tau_{ij} = \int_{b_j} \zeta_i$, of $\vec{\zeta}$.
The map depends on the choice of basepoint $P_0$, but for various $P_0$'s, differs only by a translation in $Jac(X)$ under a change of this basepoint.


The set of admissible connections $a^c$, of degree $n$, has the structure of a complex manifold which can be described in terms of the Abel-Jacobi image, $W_n := \Phi(X^{(n)})\subset Jac(X)$,  
 which is an analytic (in fact algebraic) subvariety of $Jac(X)$ (see Appendix \ref{sec:admissible-conn}). Denote by $W_{n, \text{smooth}}$ the set of the regular values of $\Phi$ in $W_n$. The singular points of $W_n$ correspond precisely to the holomorphic structures on $E$ for which $h^0(X,E) > 1$. Again this follows from Abel's theorem \cite{FarKra}, III.11.12. So we have

\begin{corollary} \label{cor:Null-one-d} 
The space of holomorphic line bundles with the underlying smooth bundle $E$ and having one-dimensional spaces of holomorphic sections 
 is parametrized by the complex sub-manifold $W_{n, \text{smooth}} \subset Jac(X)$.  Since it is the set of regular values of $\Phi$, by Sard's theorem, this latter space is an open dense submanifold of $W_n$.

\end{corollary}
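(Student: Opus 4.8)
The plan is to assemble the corollary from pieces already in place: Theorem \ref{thm:bundle-holomorphiz}, which identifies $\Null\p_{a^c}''$ with the space $H^0(X,E_n)$ of holomorphic sections of the holomorphic bundle determined by $a^c$; Proposition \ref{prop:classif-adm-con}(ii), which says that, for $1\le n\le g$, admissibility of $a^c$ (i.e. $\dim\Null\p_{a^c}''=1$) is equivalent to $\Phi|_{X^{(n)}}$ having a regular value at the divisor parametrizing $a^c$; and Abel's theorem, which sets up the dictionary between holomorphic line bundles of degree $n$ with a nonzero section, effective divisors of degree $n$, and points of $W_n=\Phi(X^{(n)})\subset Jac(X)$. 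First I would record that a degree-$n$ holomorphic structure on $E$ carries a nonzero holomorphic section precisely when its image under the standard map $\mathrm{Pic}^n(X)\to Jac(X)$ lies in $W_n$, and that two effective divisors of degree $n$ map to the same point of $W_n$ iff they are linearly equivalent, i.e. iff they lie in the same complete linear system $|D|$; thus the fibre of $\Phi|_{X^{(n)}}$ over a point $w\in W_n$ is the projective space $\mathbb{P}H^0(X,E)\cong \mathbb{P}^{h^0(X,E)-1}$ associated to the corresponding bundle $E$.

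Next I would translate the condition $h^0(X,E)=1$ into the statement that this fibre is a single point, and then into a statement about the rank of $d\Phi$. The key classical input (Abel–Jacobi, as in \cite{FarKra} III.11) is that the differential of $\Phi$ at a divisor $D=P_1+\cdots+P_n$ has rank equal to $n+1-h^0(X,E_D)=n-\dim|D|$, equivalently its corank equals $\dim|D|=h^0(X,E)-1$. Hence $w=\Phi(D)$ is a regular value of $\Phi|_{X^{(n)}}$ (note $\dim_{\mathbb C}X^{(n)}=n\le g=\dim_{\mathbb C}Jac(X)$, so "regular value" means $d\Phi$ is injective on the whole fibre) exactly when $h^0(X,E)=1$ for the bundle corresponding to $w$. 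This identifies $W_{n,\mathrm{smooth}}$, the set of regular values of $\Phi|_{X^{(n)}}$ lying in $W_n$, with the set of holomorphic line bundles on the underlying smooth $E$ having exactly one-dimensional space of holomorphic sections, which — again via Theorem \ref{thm:bundle-holomorphiz} and the identification of $\Null\p_{a^c}''$ with $H^0$ — is the set of admissible connections modulo gauge. Finally, $W_{n,\mathrm{smooth}}$ is a complex submanifold of $Jac(X)$ because it is the locus of regular values, where $W_n$ is cut out smoothly of dimension $n$; and openness and density of $W_{n,\mathrm{smooth}}$ in $W_n$ is Sard's theorem applied to the holomorphic (hence smooth) map $\Phi|_{X^{(n)}}$, whose critical values form a closed set of measure zero, so that the regular values are open and dense.

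The main obstacle I anticipate is bookkeeping the $h^0$ vs.\ $\dim|D|$ offset correctly and being careful that "regular value" is used in the right sense given $\dim X^{(n)}\le\dim Jac(X)$: the derivative $d\Phi_D$ is never surjective when $n<g$, so regularity must be interpreted fibrewise (injective along the fibre, equivalently the fibre is a reduced point), and I would state this explicitly to avoid the false impression that $\Phi|_{X^{(n)}}$ is a submersion. A secondary subtlety is that $W_n$ is in general singular along its non-smooth locus — which, by Abel's theorem, is exactly $\{w : h^0>1\}$ — so the assertion "$W_{n,\mathrm{smooth}}$ is a submanifold" also packages the fact that outside the bad locus $\Phi(X^{(n)})$ is an embedded complex submanifold of dimension $n$; I would cite \cite{GH} and \cite{FarKra} for this rather than reprove it, since the paper has declared the classification "based entirely on classical results."
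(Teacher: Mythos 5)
Your proposal is correct and follows essentially the same route as the paper: the paper likewise reduces the corollary to Abel's theorem (citing \cite{FarKra}, III.11.11--12, which is exactly your rank/corank statement $\operatorname{corank} d\Phi_D = h^0-1$ identifying the singular values of $\Phi|_{X^{(n)}}$ with the bundles having $h^0>1$), combined with Theorem \ref{thm:bundle-holomorphiz} and Sard's theorem for openness and density. Your added care about the meaning of ``regular value'' when $n<g$ and the fibre description as $\mathbb{P}H^0(X,E)$ is a useful unpacking of what the paper leaves to the citation, but it is not a different argument.
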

Theorems \ref{thm:bundle-holomorphiz} and \ref{thm:GLE-RS-exist-cond} and Corollary \ref{cor:Null-one-d} imply  Theorem \ref{thm:GLE-RS-exist}. 
  (Theorem \ref{thm:bundle-holomorphiz} connects the original $U(1)-$bundle considered in Theorem \ref{thm:GLE-RS-exist-cond} to a holomorphic one from the space described in Corollary \ref{cor:Null-one-d}.)

\begin{remark}\label{rem:gauge-equiv-sect} There is a one-to-one correspondence between homomorphisms of $E$ and sections of line bundles: a homomorphism $E\ra E'$ is a section of the line bundle $E'\otimes E^{-1}$ and a section, $g$, of a line bundle $J$  defines the homomorphism  $g: E\ra E':=J\otimes E$. 
\end{remark} 
\DETAILS{A uniformization of $X$ on its universal covering space  has the advantage of lifting our equations (\ref{GLE}) to a simply connected domain.  where they can be thought about independently of the complex structure on $X$; i.e., independently of $\G$.}

Theorem \ref{thm:GLE-RS-exist} can be formulated and proved entirely on the trivial bundle, $\tilde E:= \tilde X \times \mathbb{C}$, over the universal cover, $\tilde X$, of $X$. While the approach taken in this paper is more economical, its  uniformized version is more explicit. We present some details in Appendix \ref{sec:equiv-formul}.

 
The second reason in pursuing the  uniformized approach is that it offers a much more general definition of the (dynamical) stability solutions of the (lifting of the) GLE \eqref{GLE}, which disrupts the relation between the description on $X$ and its universal cover $\tilde X$ (see Proposition \ref{prop:proj-lift-sol}). This will be explained in more detail elsewhere.  
 

\bigskip

Considering a family, $h=r h', r>1,$ of  Hermitian metrics, 
it is convenient to rescale the Ginzburg-Landau equations \eqref{GLE} to the fixed hermitian metric, $h'$. Then  \eqref{GLE} rescale to the form 

  \begin{align}\label{GLE'} \begin{cases} -\Delta_a\psi &=  (\mu- \kappa^2|\psi|^2) \psi,\\ 
d^*d a &= 
 \Im(\overline{\psi}\nabla_a \psi),
\end{cases}\end{align}
where  $ \mu:=\kappa^2 r$ (see Appendix \ref{sec:rescGLE}). From now on, we consider \eqref{GLE'}, with $\mu$ an arbitrary positive number,  and  omitting the prime, the hermitian metric $h$. 

\medskip

The paper is organized as follows. After a preliminary Sections \ref{sec:flux-quant-gen} and Section \ref{sec:normal-branch}, we prove 
Theorem \ref{thm:GLE-RS-exist-cond}, in Sections \ref{sec:lin-probl} and \ref{sec:bifurc}, with some technical derivations given in Appendix \ref{sec:Weitz-form}. In Section \ref{sec:en-expan} we prove Theorem \ref{thm:ener-asymp}. 

\medskip

 In  Appendix \ref{sec:equiv-formul} we present an approach to the problem on the universal cover of $X$ and in Appendices \ref{sec:bundle-holomorphiz} and  \ref{sec:admissible-conn} and \ref{sec:Null-expl}, we  prove Theorem \ref{thm:bundle-holomorphiz} and discuss the proof of Proposition \ref{prop:classif-adm-con} and an explicit form of Corollary \ref{cor:Null-one-d}, 
  respectively. 
 
 We also have Appendix \ref{sec:bundleEF}, 
 which gives some additional results not used in the proofs and comments.

\section{Flux quantization} 
\label{sec:flux-quant-gen}
We have the following well-known result (the magnetic flux quantization or the Chern-Weil relation):

\begin{theorem}\label{thm:flux-quant-gen} 
If $E$ is a line bundle on $X$, with a connection $a$, then
\begin{eqnarray}\label{flux-quant}\frac{1}{2\pi} \int_X F_a &=  \deg(E) \in  \Z,\end{eqnarray}
where $\deg(E)$ is the degree of $E$. 
\end{theorem}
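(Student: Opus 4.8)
The plan is to prove \eqref{flux-quant} in two steps: first, that $\int_X F_a$ depends only on the line bundle $E$ and not on the connection; and second, that this common value equals $2\pi\deg(E)$, evaluated by means of a section.

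\emph{Step 1.} I would first note that any two connections $a,a'$ on $E$ differ by a globally defined one-form, $\al:=a'-a\in\onef$, and that their curvatures satisfy $F_{a'}=F_a+d\al$. Since $X$ is a closed surface, Stokes' theorem gives $\int_X d\al=0$, so $\int_X F_a$ is independent of $a$; in particular we may take $a$ to be unitary. It then remains to identify $\tfrac{1}{2\pi}\int_X F_a$ with $\deg(E)$.

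\emph{Step 2.} I would then choose a smooth section $\psi$ of $E$ that is not identically zero; after an arbitrarily small perturbation it has only finitely many zeros $p_1,\dots,p_k$, each carrying a local winding number $n_j\in\Z$, and by the standard description of the degree as the first Chern number --- equivalently, as the signed count of zeros of a generic section --- one has $\sum_j n_j=\deg(E)$. On $X':=X\setminus\{p_1,\dots,p_k\}$ the unit section $e:=\psi/|\psi|$ is a smooth frame in which the unitary connection $a$ has connection one-form $a_e$, a (real) one-form on $X'$ with $da_e=F_a$ there. Excising small coordinate disks $D_\e(p_j)$ and applying Stokes' theorem on $X_\e:=X\setminus\bigcup_j D_\e(p_j)$ then gives
\[ \int_X F_a \;=\; \lim_{\e\to 0}\int_{X_\e}da_e \;=\; -\lim_{\e\to 0}\sum_{j=1}^k\oint_{\partial D_\e(p_j)}a_e , \]
each circle carrying the orientation induced from $X_\e$. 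Near a fixed $p_j$, in a smooth unit local frame, $a$ is represented by a smooth one-form and $\psi$ by a function vanishing at $p_j$ with winding number $n_j$; the change of frame to $e$ is the $U(1)$-valued map $u:=\psi/|\psi|$, and the transformation rule \eqref{gauge-transf} changes the connection one-form by a term proportional to $u^{-1}du$. As $\e\to0$, the smooth part of $a_e$ contributes nothing and $\oint_{\partial D_\e(p_j)}u^{-1}du=2\pi i\,n_j$, so each boundary integral is a fixed multiple of $2\pi n_j$; collecting the constants --- which the conventions of the paper fix so as to yield the stated normalization --- one arrives at $\tfrac{1}{2\pi}\int_X F_a=\sum_j n_j=\deg(E)\in\Z$.

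The step I expect to demand the most care is precisely this last accounting of signs and of the factor $2\pi$: the convention $\nabla_a=d+ia$, the boundary orientation induced on the circles $\partial D_\e(p_j)$, the sign in the transformation rule \eqref{gauge-transf}, and the sign built into the topological normalization of $\deg(E)$ all enter and must be kept mutually consistent, so that the answer is $+\deg(E)$ rather than $-\deg(E)$. The only genuinely geometric inputs --- the transversality that makes the zeros of $\psi$ isolated, and the identity $\sum_j n_j=\deg(E)$ --- are classical. I would also remark that an equivalent, section-free route is available: over a good cover trivialize $E$, write $F_a=da_\al$ locally with $a_\al-a_\bet$ determined by the transition functions, and reduce $\int_X F_a$, through a triangulation and repeated use of Stokes' theorem, to a sum of winding numbers of the transition functions, equal to $2\pi\deg(E)$. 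I would carry out the section-based argument, as it is the more hands-on of the two and fits the style of the rest of the paper.
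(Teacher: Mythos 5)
The paper does not actually prove Theorem \ref{thm:flux-quant-gen}: it states it as a well-known fact (the Chern--Weil relation) and merely points to Gunning for the definition of $c_1$ via automorphy factors, so there is no in-paper argument to compare yours against. Your two-step proof is the standard one and is correct: the connection-independence of $\int_X F_a$ via $F_{a'}=F_a+d\al$ and Stokes on the closed surface, followed by the excision argument in the unit frame $e=\psi/|\psi|$ of a generic section, reducing $\int_X F_a$ to a sum of winding numbers $2\pi\sum_j n_j=2\pi\deg(E)$. The one place you rightly flag as delicate is worth making explicit: with the paper's convention $\nabla_a=d+ia$ the connection one-form in a unit frame is real, and the frame change by $u=e^{i\theta}$ shifts it by $-i\,u^{-1}du=d\theta$ (cf.\ \eqref{gauge-transf}), so each boundary circle contributes exactly $2\pi n_j$ with no stray factor of $i$; once that is written out the constants close up and the sign comes out as $+\deg(E)$. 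Your remark that one could instead work with a good cover and the winding of transition functions is the cohomological version of the same computation; the section-based route you chose is indeed the more hands-on and matches the spirit of the paper's appendices.
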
  
 $c_1(E):=\frac{1}{2\pi} \int_X F_a$ is called sometimes the first Chern number. It can be defined in terms of the automorphy factor of $X$, see \cite{Gun2}. 

\bigskip

Now, suppose $\nabla_a$ is a connection of constant curvature on $E$. 
Then, by \eqref{flux-quant}, we have
 \begin{align}  \label{b-area-gen}
 b = \frac{1}{|X|}\int_X F_a   = 2\pi c_1(E)/|X|= 2\pi n/|X|. 
\end{align}
 where $|X|$ is the area of $X$. For the hyperbolic metric the Gauss-Bonnet formula gives that 
$|X|=2\pi(2g-2)$. This, together with 
\eqref{b-area-gen}, gives
 \begin{align}  \label{b-area-hyperb}
 b =  n/(2g-2). 
\end{align}

\section{The normal branch of solutions} \label{sec:normal-branch}

For reference reason we present the next well-known fact (see e. g. \cite{Huy}, Proposition 4.2.2 and \cite{DonKron}, Sect 2.2.1, page 50) as
  \begin{lemma}\label{lem:normal-sol}  
 Any constant curvature connection, $a$, on $E$ is of the form  $a_{*} +\beta, $ where $a_{*}$ is a fixed constant curvature connection on $E$ and $\beta$ is a closed one-form, $ d\beta =0$.
\end{lemma}
Next, the important statement \eqref{norm-sol} is implied by the following

\begin{lemma}\label{lem:MaxwAb}  $a$ is an constant curvature connection on $E$ 
iff it satisfies the (Maxwell) equation 
\begin{align}\label{Maxw-ab} d^*d a &= 0.\end{align}
Consequently, the triple $(0, a, \mu)$ solves the Ginzburg-Landau equations  \eqref{GLE'} for every constant curvature connection $a$ and every $\mu>0$, which proves \eqref{norm-sol}. 
\end{lemma}
 \begin{proof} 
 Let $*$ be  the Hodge star operator (the properties $*$ are listed in \eqref{star}). Using the relation  $d^*=(-1)^k *d*$ on $k-$forms, we find $d^*d a=- *d*d a$. If $*d a= b$ for some constant $b$, 
then $d^*d a=- *d b =0$. 
  
  Conversely, if $d^*d a = 0$, then $d f= 0$, where $f:=*d a$ is a function on $X$, which implies that $f$ is constant. \end{proof}
     
Recall that we call the branch  $\{(0, a,  \mu): a$ is a constant curvature connection 
and $\mu>0\}$ of solutions of the Ginzburg-Landau equations \eqref{GLE'}, the {\it normal} (or constant curvature) branch. We will bifurcate from this branch. 

\bigskip

Let  
 $\Om^1_{c}$  
be the subspace of real (smooth or square integrable) closed one-forms  on $X$ with values in $ E^*_n$.  
Lemma \ref{lem:normal-sol} implies that the space, $\cA_{cc, n}$, of constant curvature connections on $E_{n}$ is of the form 
 \begin{align}\label{ccc-space} \cA_{cc, n} = a^{n}+\Om^1_c,\end{align}
  where $a^{n}$ is  a fixed constant curvature connection on $E_{n}$. By \eqref{flux-quant}, $d a^n=2\pi n \om$. 

\section{The linear problem}\label{sec:lin-probl}

The first step in the bifurcation analysis is to investigate the linearized equations.

We linearize the Ginzburg-Landau equations \eqref{GLE'} at the solution $(0, a^n)$,  where $a^n$ is a constant curvature connection on $E_n$
  (see  Lemmas \ref{lem:MaxwAb} and \ref{lem:normal-sol}),  to obtain the pair of equations 
\begin{align}\label{GLElin}
-\Delta_{a^n}\phi &-  \mu \phi=0, \
d^*d \alpha = 0,
\end{align}
on $X$.  
  Our goal is to solve these equations. We begin with the first equation.

\bigskip

We  decompose, as usual, the unitary connection $\n_a$ as $\n_a=\partial_{a}'+  \partial_{a}''$, where $\partial_{a}': \Om^{p, q} \ra  \Om^{p+1, q}$ and $\partial_{a}'': \Om^{p, q} \ra  \Om^{p, q+1}$ (cf. Section  \ref{sec:cc-con-exist}). We show 

 \begin{proposition}\label{prop:Delta_a-part_a} Let $a^n$ be a connection of the constant curvature, 
 $F_a=b \om\ (b= \frac{2 \pi n }{|X|})$. Then the operator $-\Delta_{\an}$ has purely discrete spectrum, is bounded below as $-\Delta_{a^n} \ge b$ and $b$ is  the smallest eigenvalue of $-\Delta_{\an}$ iff  $\Null \partial_{\an}''\ne \emptyset$. More precisely, 
  \begin{align}\label{Delta_a-part_a}\Null (-\Delta_{a^n} -  b) = \Null \partial_{a^n}''.
   \end{align} 
\end{proposition}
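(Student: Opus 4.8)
The plan is to use a Weitzenböck-type (Bochner--Kodaira) identity relating the connection Laplacian $-\Delta_{a^n}=\nabla_{a^n}^*\nabla_{a^n}$ acting on sections of $E_n$ to the $\bar\partial$-Laplacians $(\partial_{a^n}'')^*\partial_{a^n}''$ and $\partial_{a^n}'(\partial_{a^n}')^*$, with a zeroth-order curvature term proportional to $b$. First I would record the decomposition $\nabla_{a^n}=\partial_{a^n}'+\partial_{a^n}''$ and compute, on a Kähler surface (here the hyperbolic metric on $X$ is Kähler since $X$ is a Riemann surface), the commutator identity
\begin{align*}
\partial_{a^n}'(\partial_{a^n}')^* + (\partial_{a^n}')^*\partial_{a^n}' - \big(\partial_{a^n}''(\partial_{a^n}'')^* + (\partial_{a^n}'')^*\partial_{a^n}''\big) = [i F_{a^n}, \Lambda],
\end{align*}
where $\Lambda$ is the adjoint of multiplication by the Kähler form $\omega$. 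Applied to a section $\phi$ of $E_n$ (a $(0,0)$-form), the terms $(\partial_{a^n}')^*\partial_{a^n}'$ and $\partial_{a^n}''(\partial_{a^n}'')^*$ vanish for degree reasons, so one gets $-\Delta_{a^n}\phi = 2(\partial_{a^n}'')^*\partial_{a^n}''\phi + c\,\phi$ with the constant $c$ read off from $F_{a^n}=b\omega$ and the normalization of $\omega$; I would carry out the bookkeeping so that $c$ comes out exactly equal to $b$. (This is the identity presumably established in Appendix~\ref{sec:Weitz-form}, which I am entitled to invoke.)

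Given this identity, the statements follow formally. Since $X$ is compact, $-\Delta_{a^n}$ is a nonnegative self-adjoint elliptic operator with compact resolvent, hence has purely discrete spectrum accumulating only at $+\infty$; the operator $(\partial_{a^n}'')^*\partial_{a^n}''$ is likewise nonnegative, so the Weitzenböck identity gives $\langle -\Delta_{a^n}\phi,\phi\rangle = 2\|\partial_{a^n}''\phi\|^2 + b\|\phi\|^2 \ge b\|\phi\|^2$, i.e. $-\Delta_{a^n}\ge b$. Moreover $(-\Delta_{a^n}-b)\phi=0$ is equivalent to $\|\partial_{a^n}''\phi\|^2=0$, i.e. to $\partial_{a^n}''\phi=0$, which is precisely $\Null(-\Delta_{a^n}-b)=\Null\partial_{a^n}''$. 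In particular $b$ is attained as an eigenvalue (and is therefore the smallest one, by the lower bound) if and only if $\Null\partial_{a^n}''\ne\{0\}$.

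The main obstacle I expect is purely computational rather than conceptual: getting the \emph{constant} in the Weitzenböck identity exactly right. The curvature term $[iF_{a^n},\Lambda]$ produces a multiple of $b$ that depends on conventions for $\omega$, for the Hodge star, for the inner product on forms, and for the factor relating $\Delta_{a^n}$ to the two Dolbeault Laplacians (the factor $2$ above is itself convention-dependent). One must check that with the normalizations fixed in Section~\ref{sec:GLE} the term is $b\phi$ and not, say, $\tfrac12 b\phi$ or $2b\phi$; otherwise the sharp lower bound $-\Delta_{a^n}\ge b$ and the clean identity \eqref{Delta_a-part_a} would be off by a scalar. A secondary routine point is justifying the functional-analytic statements (discreteness of the spectrum, density of smooth sections, self-adjointness) on the relevant Sobolev spaces $\mathscr{H}^r_n$, which is standard elliptic theory on a compact manifold. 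I would relegate the constant-chasing to the appendix and keep the main-text argument at the level of the three bullet points above.
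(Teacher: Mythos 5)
Your proposal is correct and follows essentially the same route as the paper: everything rests on the Weitzenb\"ock identity ${\partial_{a}''}^*\partial_{a}'' = \tfrac{1}{2}(-\Delta_{a} - *F_{a})$ (Proposition \ref{prop:Delta_a-repr}, proved in Appendix \ref{sec:Weitz-form}), from which the lower bound, the identification of the kernels, and the discreteness of the spectrum on a compact surface follow exactly as you argue. The only cosmetic difference is that you sketch deriving the identity via the K\"ahler commutation relation $[iF_{a},\Lambda]$, whereas the appendix obtains it (with the correct constant) by a direct computation of ${\partial_A''}^*\partial_A''$ in local coordinates on the universal cover.
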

The value $\mu=b$ is a  bifurcation point for our problem. More precisely, if 
 $\Null \partial_{a^n}''\ne \emptyset$, then $\Null (-\Delta_{a^n} -  b) =\emptyset$, if  $\mu<b$, 
  and $\Null (-\Delta_{a^n} -  b) \ne \emptyset$ if $\mu=b$.

The statement that the operator $-\Delta_{a^n}$ has purely discrete spectrum is a standard consequence of the condition that $X$ is compact. The rest of Proposition \ref{prop:Delta_a-part_a} follows from 
 the following Weitzenb\"ock-type formula or harmonic oscillator representation, which was proven in \cite{Tejero} and whose simple proof is presented in Appendix \ref{sec:Weitz-form}.

 \begin{proposition}\label{prop:Delta_a-repr}
 \begin{align}
{\partial_a''}^* \partial_a'' &= \frac{1}{2}(-\Delta_a - *F_a).\label{Lapl-formula}
\end{align} \end{proposition}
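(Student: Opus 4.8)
The plan is to verify \eqref{Lapl-formula} as an identity of second–order differential operators on sections of $E_n$, and since both sides are local it is enough to check it near an arbitrary point, in a holomorphic coordinate $z=x+iy$, a local unitary frame $e$ of $E$ (so $|e|\equiv1$), and the conformal metric $h=\lambda\,|dz|^2$, $\lambda>0$, with volume form $\omega=\lambda\,dx\wedge dy=\tfrac{i}{2}\lambda\,dz\wedge d\overline z$. Writing the (real) connection form as $a=A\,dz+\overline A\,d\overline z$, a section $s=u\,e$ satisfies $\nabla_a s=(Du)\,dz\otimes e+(\overline D u)\,d\overline z\otimes e$ with $D:=\p_z+iA$, $\overline D:=\p_{\overline z}+i\overline A$, so $\p_a' s=(Du)\,dz\otimes e$ and $\p_a'' s=(\overline D u)\,d\overline z\otimes e$. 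Since $\p_a'$ and $\p_a''$ shift the Dolbeault bidegree in complementary directions, the mixed terms ${\p_a'}^*\p_a''$ and ${\p_a''}^*\p_a'$ annihilate $0$–forms, so that on sections of $E_n$
\[
-\Delta_a=\nabla_a^*\nabla_a={\p_a'}^*\p_a'+{\p_a''}^*\p_a'' .
\]

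Next I would compute the two adjoints by one integration by parts in $x,y$ (boundary terms vanish, the patch being interior): up to the normalization of the induced metric one gets ${\p_a'}^*\big(f\,dz\otimes e\big)=-\tfrac{2}{\lambda}\,\overline D f\cdot e$ and ${\p_a''}^*\big(g\,d\overline z\otimes e\big)=-\tfrac{2}{\lambda}\,D g\cdot e$; crucially, in an isothermal coordinate no $\p\lambda$–term survives, which is why no metric–curvature term appears in the final formula. Hence ${\p_a'}^*\p_a' u=-\tfrac{2}{\lambda}\,\overline D D u$ and ${\p_a''}^*\p_a'' u=-\tfrac{2}{\lambda}\,D\overline D u$, so
\[
2\,{\p_a''}^*\p_a''=\big({\p_a'}^*\p_a'+{\p_a''}^*\p_a''\big)+\big({\p_a''}^*\p_a''-{\p_a'}^*\p_a'\big)=-\Delta_a+\tfrac{2}{\lambda}\,[\,\overline D,D\,].
\]
The commutator is purely zeroth order, $[\,\overline D,D\,]=i(\p_{\overline z}A-\p_z\overline A)$, i.e. $(-i)$ times the coefficient of $dz\wedge d\overline z$ in $F_a=da=(\p_z\overline A-\p_{\overline z}A)\,dz\wedge d\overline z$; rewriting $F_a$ against $\omega$ identifies $\tfrac{2}{\lambda}[\,\overline D,D\,]$ with $-\!*\!F_a$, the coefficient of $F_a$ relative to the volume form. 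This gives $2\,{\p_a''}^*\p_a''=-\Delta_a-*F_a$, which is \eqref{Lapl-formula}. (The case $F_a=0$ recovers the familiar Kähler relation $\Delta_{\overline\partial}=\tfrac12\Delta_d$ on functions, a useful consistency check.)

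The genuinely delicate part is the bookkeeping of constants and signs: producing exactly the factor $\tfrac12$ and, above all, the correct sign of $*F_a$ forces one to fix, once and for all, the orientation and the normalization $*\omega=1$ of the Hodge star on $2$–forms, the induced Hermitian inner products on $E$–valued $(1,0)$– and $(0,1)$–forms, and the sign convention $\nabla_a=d+ia$; with these pinned down, the computation above is rigid. A coordinate–free route would be to invoke the Nakano commutation identity $[\Lambda,\p_a'']=-i{\p_a'}^*$ on the (automatically) Kähler surface $X$ together with the Bochner–Kodaira–Nakano identity restricted to $(0,0)$–forms, but establishing that identity comes down to the same local computation, so I would simply carry out the direct argument above.
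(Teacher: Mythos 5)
Your argument is correct in structure and establishes the identity by the same kind of local computation as the paper, but it is organized along a genuinely different line. The paper's proof (Appendix~\ref{sec:Weitz-form}) first writes down the adjoint ${\partial_A''}^*$ explicitly via the Hodge star and then brute-force expands the product ${\partial_A''}^*\partial_A''$ in real isothermal coordinates, reading off $-\Delta_A$, $|A|^2$ and the curvature term from the fully multiplied-out expression. You instead use the splitting $-\Delta_a={\partial_a'}^*\partial_a'+{\partial_a''}^*\partial_a''$ (valid by orthogonality of $(1,0)$- and $(0,1)$-forms), compute each summand as $-\tfrac{2}{\lambda}\,\overline D D$ and $-\tfrac{2}{\lambda}\,D\overline D$, and isolate the curvature as the zeroth-order commutator $[\overline D,D]$. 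This Bochner--Kodaira-style organization is cleaner: it makes transparent why the only correction to $-\tfrac12\Delta_a$ is a curvature term, it explains why no derivative of $\lambda$ appears, and it yields the companion identity ${\partial_a'}^*\partial_a'=\tfrac12(-\Delta_a+*F_a)$ for free.

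The one point that deserves more care is the very one you flag: the identification of $\tfrac{2}{\lambda}[\overline D,D]$ with $-*F_a$ rather than $+*F_a$ is entirely a matter of orientation and sign conventions for $*$, $F_a=da$ and $\nabla_a=d+ia$. With the conventions the paper itself records in \eqref{star} (so that $*(dx_1\wedge dx_2)=1/\lambda$) one computes $[\overline D,D]=\tfrac12(\partial_{x_1}A_2-\partial_{x_2}A_1)$, i.e.\ $\tfrac{2}{\lambda}[\overline D,D]=+*F_a$; and indeed the final displayed line of the paper's own proof carries the term $+(\partial_{x_1}A_2-\partial_{x_2}A_1)$, which is $+\lambda\,{*F_A}$, in apparent tension with the minus sign in the statement. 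So the sign you assert without a full derivation is precisely the one the paper is also cavalier about. To pin it down you should test your conventions on the constant-field model on $\mathbb{C}$, where the bottom of the spectrum of $-\Delta_a$ is $|{*F_a}|$ and the ground state is annihilated by whichever of $\partial_a'$, $\partial_a''$ carries the $-*F_a$ in its Weitzenb\"ock formula. This is a convention check rather than a gap in your reasoning; apart from it, the proof is complete.
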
 
Since $*F_{a^n}=b$, this proposition implies Proposition \ref{prop:Delta_a-part_a}.

\bigskip

This finishes the first equation in \eqref{GLElin}. For the second equation in \eqref{GLElin}, let  $\Om^1_{\rm harm}$ be the subspace of real (smooth or square integrable) one-forms  on $X$ with values in $ E^*_n$, which are harmonic, i.e.  satisfying 
  \begin{align}\label{harm-df} d \beta = 0,\  d^*\beta = 0.    \end{align}
   Then, by Lemma \ref{lem:MaxwAb}, the definition of $\vec{\mathscr{H}^2}$ and the integration by parts ($\lan \beta, d^*d\beta\ran =\|d\beta\|^2$), we have
  \begin{corollary}\label{cor:null d* d=harm} 
 \begin{align}\label{null2}\Null  d^*d\big|_{\vec{\mathscr{H}^2}} =\harm. \end{align} 
   \end{corollary}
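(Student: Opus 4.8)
The plan is to establish the two inclusions in \eqref{null2} separately; the substance is entirely contained in the integration-by-parts identity anticipated in the statement, so the argument is short. First I would dispose of the inclusion $\harm \subseteq \Null d^*d\big|_{\vec{\mathscr{H}^2}}$: if $\beta$ satisfies \eqref{harm-df}, then $d\beta = 0$ gives $d^*d\beta = 0$, and $\beta$ is co-closed by the second equation in \eqref{harm-df}; moreover, on the compact surface $X$ a square-integrable harmonic one-form is automatically smooth by elliptic regularity, hence lies in $\vec{\mathscr{H}^2}$. Therefore $\beta \in \Null d^*d\big|_{\vec{\mathscr{H}^2}}$.

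For the reverse inclusion, let $\alpha \in \vec{\mathscr{H}^2}$ with $d^*d\alpha = 0$. Since $X$ is closed there are no boundary contributions, so integration by parts yields
\[0 = \langle \alpha, d^*d\alpha\rangle = \langle d\alpha, d\alpha\rangle = \|d\alpha\|^2,\]
whence $d\alpha = 0$. (Equivalently, one can invoke Proposition \ref{prop:MaxwAb}: $d^*d\alpha = 0$ forces $*d\alpha$ to be a constant $c$, i.e. $d\alpha = c\,\omega$, and integrating over $X$ with Stokes' theorem gives $0 = \int_X d\alpha = c\,|X|$, so $c = 0$.) Combining $d\alpha = 0$ with $d^*\alpha = 0$ --- which holds precisely because $\alpha$ belongs to $\vec{\mathscr{H}^2}$ --- shows that $\alpha$ satisfies \eqref{harm-df}, i.e. $\alpha \in \harm$. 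This proves $\Null d^*d\big|_{\vec{\mathscr{H}^2}} \subseteq \harm$, completing the argument.

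There is essentially no obstacle here: the only point requiring attention is the legitimacy of the integration by parts, which is guaranteed by compactness of $X$ together with the fact that elements of $\Null d^*d$ are smooth --- indeed, $d^*d$ restricted to the co-closed one-forms coincides with the elliptic Hodge Laplacian $\Delta = dd^* + d^*d$, so its kernel is finite-dimensional and consists of smooth forms, and the identity $\langle\alpha, d^*d\alpha\rangle = \|d\alpha\|^2$ then holds on the nose. I would also record, since it is used in the sequel and underlies the relation with de Rham cohomology mentioned in the introduction, that this common space $\harm$ is precisely the space of real harmonic one-forms on $X$, which by Hodge theory represents $H^1_{dR}(X;\R)$ and hence has real dimension $2g$.
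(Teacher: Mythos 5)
Your proof is correct and takes essentially the same route as the paper: the paper's one-line justification invokes precisely the integration-by-parts identity $\lan \al, d^*d\al\ran =\|d\al\|^2$ together with the co-closedness built into the definition of $\vec{\mathscr{H}^2}$ (and Proposition \ref{prop:MaxwAb}, which you record as the parenthetical alternative). Your added remarks on elliptic regularity and the identification of $\harm$ with $H^1_{dR}(X,\R)$ simply make explicit points the paper treats as standard.
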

\paragraph{Remark.}  Let  $H^1_{dR}(\mathbb{H},\mathbb{R})$ be the first de Rham cohomology group. Then we have the following standard result (which follows from the Hodge decomposition theorem) 
 \begin{align}\label{harm-dR}\Om^1_{\rm harm}(X, \mathbb{R}) \approx  H^1_{dR}(X, \mathbb{R}), 
   \end{align} 
  where $\approx$ stands for the isomorphism identifying elements of $\Om^1_{\rm harm}(X, \mathbb{R})$ with equivalence classes of $H^1_{dR}(X, \mathbb{R})$.

\section{Bifurcation analysis }\label{sec:bifurc}


We define the spaces, $\mathscr{L}^2_n$ and $\vec{\mathscr{L}}^2$, of $L^2-$sections of $E_{n}$ and (weakly) co-closed $L^2-$one-forms 
 on $X$ (with values in $End(E_n)$) 
 and let $\mathscr{H}^s_{n}$ and $\vec{\mathscr{H}^s_{n}}$  be their natural Sobolev versions. 
 
\medskip

 We look for solutions of  the Ginzburg--Landau equations, \eqref{GLE'}, in the form  $(\psi, a^{n} +\alpha)$, where $(\psi, \alpha) \in \mathscr{H}^2_{n} \times \vec{\mathscr{H}^2_{n}}$. Using  \eqref{Maxw-ab} ($d^*d a^{n} = 0$), we rewrite  \eqref{GLE'} as
  \begin{align} \label{GLE''} \begin{cases} &-\Delta_{a^n + \alpha}\psi + (\kappa^2|\psi|^2 - \mu)\psi=0, \\ 
&d^*d \alpha -\Im(\overline{\psi}\nabla_{a^{n} + \alpha}\psi)=0.
\end{cases}\end{align}
Our strategy is, following \cite{TS, 
 CSS}, to solve first the following modified equation 
\begin{align}  \label{F-eq} & F(\psi, \alpha, \mu) =  0,\ F :\mathscr{H}^2 \times \vec{\mathscr{H}^2} \times \mathbb{R}^+ \rightarrow \mathscr{L}^2_{n} \times \vec{\mathscr{L}^2_{n, \s}}, \\
  \label{F} & F(\psi, \alpha, \mu) =  (-\Delta_{a^{n} + \alpha}\psi + (\kappa^2|\psi|^2 - \mu)\psi,\  d^*d \alpha -  P_{\rm co-clo} J (\psi, \al)),
\end{align}
where $J (\psi, \al):=\Im(\overline{\psi}\nabla_{a^{n } + \alpha}\psi)$ and $P_{\rm co-clo}$ is the orthogonal projection in the space of real square integrable one-forms onto the subspace of real co-closed one-forms, $\Om^1_{c*}$. 
 Then we go from \eqref{F-eq} back to the original system \eqref{GLE''} using the following proposition and its corollary:
 \begin{proposition}\label{prop:J-co-closed}  
Assume $(\psi, a)$ solves the first equation in \eqref{GLE''}.
\DETAILS{, i.e. 
\begin{align}  \label{GLE1}  -\Delta_{a}\psi + (\kappa^2|\psi|^2 - \mu)\psi=0.
\end{align}}
 Then $ J (\psi, a)$ is a co-closed one-form (i.e. $d* J=0$).
   \end{proposition}
 \begin{corollary}\label{cor:solGLE} A solution $(\psi, \alpha, \mu)$ to \eqref{F-eq} solves also \eqref{GLE''}.    \end{corollary}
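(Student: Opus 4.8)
The plan is to unpack the definition of $F$ in \eqref{F} and compare the two components of the equation $F(\psi,\alpha,\mu)=0$ with the two equations in \eqref{GLE''}. The first component of $F$ is, by inspection, precisely the first equation $-\Delta_{a^n+\alpha}\psi + (\kappa^2|\psi|^2-\mu)\psi = 0$ of \eqref{GLE''}, so nothing is needed there beyond recording that this equation is in force. The only real content is therefore to pass from the second component, $d^*d\alpha - P_{\rm co-clo}J(\psi,\alpha) = 0$, to the genuine second Ginzburg--Landau equation $d^*d\alpha - J(\psi,\alpha) = 0$, i.e.\ to remove the projection $P_{\rm co-clo}$.

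First I would invoke Proposition \ref{prop:J-co-closed}: since $(\psi, a^n+\alpha)$ satisfies the first equation of \eqref{GLE''}, which is \eqref{GLE1} with $a = a^n + \alpha$, the current one-form $J(\psi,\alpha) = \Im(\overline{\psi}\nabla_{a^n+\alpha}\psi)$ is co-closed, $d^*J(\psi,\alpha)=0$. Next I would note that the orthogonal projection $P_{\rm co-clo}$ onto the space of real co-closed one-forms acts as the identity on any co-closed one-form; this is immediate once one knows that $J(\psi,\alpha)$ already lies in its range, which is exactly the statement just established. (Equivalently, by the Hodge decomposition a square-integrable one-form splits into an exact, a co-exact and a harmonic piece, a one-form is co-closed iff its exact part vanishes, and $P_{\rm co-clo}$ is the projection killing that exact part.) Hence $P_{\rm co-clo}J(\psi,\alpha) = J(\psi,\alpha)$.

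Substituting this back into the second component of $F(\psi,\alpha,\mu)=0$ yields $d^*d\alpha - J(\psi,\alpha) = 0$, which together with the first equation is exactly the system \eqref{GLE''}, recalling that $d^*d(a^n+\alpha) = d^*d\alpha$ by \eqref{Maxw-ab}. This completes the argument. I do not expect any genuine obstacle: the corollary is a bookkeeping consequence of Proposition \ref{prop:J-co-closed}, and the single point that warrants a sentence of justification is the identity $P_{\rm co-clo}J = J$, which follows in one line from $J$ being co-closed.
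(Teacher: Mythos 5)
Your proposal is correct and follows exactly the paper's intended argument: the corollary is stated immediately after Proposition \ref{prop:J-co-closed} precisely so that the co-closedness of $J(\psi,\alpha)$ (which follows from the first equation of $F=0$ being the first Ginzburg--Landau equation) makes $P_{\rm co-clo}$ act as the identity on $J$, turning the second component of $F=0$ into the second equation of \eqref{GLE''}. Nothing in your write-up deviates from, or adds a gap to, the paper's reasoning.
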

 \begin{proof}[Proof of Proposition \ref{prop:J-co-closed}]  Given a base point $z_0\in \tilde X$ and a closed form $\beta$ on $X$, 
we define the family of maps
\begin{align}  \label{gs-tilde}\tilde g_s(z) \equiv \tilde g(z)^s,\  \tilde g(z)\equiv \tilde g_{z_0,   \beta}(z):= e^{ i\  \int_{z_0}^z\tilde \beta}: \tilde X\ra U(1),\end{align}
where $\tilde \beta$ is a lift of $\beta$ to $\tilde X$  and $\int_{z_0}^z\tilde \beta$ is the integral of $\tilde \beta$ over  a  path, $c_{z_0, z}$, in $\tilde X$ from $z_0$ to $z$. Since $\beta$ is a closed one-form, these maps are independent of the choice of paths $c_{z_0, z}$ from the same homotopy class, are differentiable and satisfy  
\begin{align}  \label{gs-tilde-cocycle}\tilde g_s(\g z) = \tilde g_s(z) \s'_s(\g),\ \s'_s(\g):=e^{ i\ s \int_{\g } \beta}\in Hom(\G, U(1)).\end{align} 
Here $\int_{\g } \beta$ is a period of $\beta$. 
 (For the last equation, we have $\tilde g_s(\g z)  \tilde g_s(z)^{-1}= e^{ i\ s \int_{z}^{\g z}\tilde \beta}= e^{ i\ s \int_{\g } \beta}$. 
 For more details, see \cite{Forster}, Theorem 10.13.)

As a consequence of the last equality,  the maps $\tilde g_s(z)$ descend to sections $g_s$ of a line bundle over $X$ with fibers isomorphic to $U(1)$, which satisfy $g_s^{-1} d g_s= i s \beta$.

The sections $g_s$ 
 generate the gauge transformations 
  and  the rescaled Ginzburg-Landau energy functional 
\begin{align}\label{GLen}
	\cE (\psi, a, h) &= \|\nabla_a \psi\|^2  + \|d a\|^2 + \frac{\kappa^2}{2}\|(|\psi|^2-\mu/\kappa^2)\|^2. 
\end{align}    
  is invariant under these transformations: $\cE (\psi, a) =\cE (g_s\psi, a+ i g_s^{-1} d g_s)$. We differentiate the last equation 
 with respect to $s$ at $s=0$, to obtain 
\begin{align} \label{E-deriv'}\Re\lan \n_{a}\psi, \n_{a}(\ln g \psi)\ran & + \lan(\kappa^2|\psi|^2 - \mu)\psi,  \ln g \psi\ran\notag \\ 
& + \lan d^* d a -J (\psi, a),\beta\ran=0,\end{align}
Assume $\beta$ is exact (and therefore its periods vanish). Then we can write $g=e^{i f}$, where $f$ is a real (single-valued) function on $X$ and integrating by parts in the first term and using $d a=d\al$ and $ \lan d^* d\al, \beta\ran=\lan d\al, d\beta\ran=0$, we find 
\begin{align} \label{E-deriv} \Re\lan -\Delta_{a}\psi  &+ (\kappa^2|\psi|^2 - \mu)\psi, i f \psi\ran- \lan J (\psi, a),\beta\ran=0.\end{align} 
This, together with the first equation in \eqref{GLE''}, 
  implies $\lan J (\psi, a),\beta\ran=0$. 
 Since the last equation holds for any exact form $\beta$, we conclude that $ J (\psi, a)$ is co-closed. 
\end{proof}

Thus, by Corollary \ref{cor:solGLE}, it suffices to solve the equation $F=0$, with $F$ given in \eqref{F}. Let $u:=(\psi, \al)$.
The map, $F(u, \mu)$, has the properties (a) $F$ is $C^2$; (b) $F(T^{\rm gauge}_{\eta}u, \mu) =F(u, \mu)$ and (c)  $F(0, \mu) = 0,  \forall \mu$, and therefore $(0,\mu)$ is a trivial branch of solutions to $F(u, \mu) =0$.\footnote{In fact, there is a larger trivial branch of the form $(\psi=0, \beta, \mu)$, where $\beta \in \harm$, but we will avoid using the latter.}

Clearly, \begin{align}\label{dF-norm}d F (0, \mu)= (-\Delta_{a^{n}} -  \mu) \oplus d^*d.\end{align}
 By Proposition \ref{prop:Delta_a-part_a}, $\Null (-\Delta_{a^n} -  b) = \Null \partial_{a^n}''$ and by \eqref{null2}, we have $\Null_{\vec{\mathscr{H}^2}} d^*d = \harm$.  
Hence  the null space,  $K_{\mu}$, of $d F (0, b)$ is 
\begin{align}\label{NulldF'}  K_{ \mu} = 
\begin{cases} & \Null \p_{\an }'' \times \harm\ 
 \text{ if }\ \mu = b,\\ 
 & \{0\} \times \harm\ \text{ if }\ \mu < b.
\end{cases}\end{align}
We define $P$ to be the projection on $K_{ b}$ and let $P^\perp = 1- P $. By \eqref{NulldF'} and the definition of $P^\perp$,  $d_w P^\perp F\big|_{w=0}= P^\perp d_wF\big|_{w=0}$ is invertible on $\Ran  P^\perp$. Now, we perform Lyapunov-Schmidt reduction:
\begin{align}
P F(v + w, \mu) = 0,  \label{ker}\\
P^\perp F(v + w, \mu) = 0, \label{perp}
\end{align}
where $v:=P (\psi, \alpha)$ and $w:=P^\perp (\psi,\alpha)$. Since $d_w P^\perp F\big|_{w=0}$ is invertible on $\Ran  P^\perp$, provided $v$ and $\mu - b$ are sufficiently small,  we can use the implicit function theorem to find a unique solution, $w=w(v, \mu)$, for \eqref{perp} (again provided $v$ and $\mu - b$ are sufficiently small). 

\bigskip

By the assumption of Theorem \ref{thm:GLE-RS-exist-cond}, $\p_{\an }''$  has a non trivial one-dimensional kernel. Let $\phi$ be the unique (modulo gauge transformations) solution to the equation $\p_{\an}''\phi=0$. 
 We assume $\phi $ is normalized, i.e. $\|\phi \|=1$. 
Observe that 
\begin{align} \label{v}v\equiv v (s,  \mu) := P (\psi, \alpha)=(s\phi, \beta), \end{align}  
   where $s=\lan \phi, \psi\ran$ and $\beta$ is the projection of $\al$ onto $\harm$.  
  So that the solution to \eqref{perp} can be written as $w=w(s, \beta, \mu)$, with $(s, \beta, \mu)$ in a neighbourhood of $(0, 0, b)$.

To estimate $w(s, \mu)$, we decompose $P^\perp F$ into the linear and nonlinear components, $P^\perp F(v + w, \mu)=L^\perp w + N(v, w)$, and rewrite \eqref{perp} as the fixed point problem $ w = - (L^\perp)^{-1} N(v, w)$. Then using estimates on $N(v, w)$ and its derivatives, it is not hard to show that,  for $(s, \mu)$ in a neighbourhood of $(0, b)$,  
\begin{align} \label{w-deriv}\p^\nu w(s, \beta, \mu) 
=(O(s^3), O(s^2)),\ \forall \nu,\end{align} 
where $\p^\nu$ is a derivative of the order $\nu$ in $s, \beta,   \mu$ (recall that $\beta\in\harm$, a finite dimensional vector space) and  $O(s^k)$ stands for a remainder, which is uniformly bounded, together with its derivatives in $\mu$, by $C s^k$. 

Substituting the solution $w=w(s, \beta, \mu)$ 
  to \eqref{perp} into equation \eqref{ker}, we arrive at the equation 
\begin{align}\label{PF-eq} P F(s, \beta, \mu) =0,\ F(s, \beta, \mu) :=  F(v(s, \beta, \mu) + w(s, \beta, \mu), \mu). 
\end{align}

This equation has the $U(1)$ symmetry in $s$: for $\al$ constant, we have $P F(e^{i \al} s, \beta, \mu) =e^{i \al} P F(s, \beta, \mu) $. \DETAILS{We fix the $U(1)$ gauge by 
 restricting $\Null \p_{\an}''$ to a one dimensional real vector space by using the property (b) to pass from $s\phi $ to $|s| \phi $. {\bf (do we need this?)}} 

Let $F=(F_1, F_2)$ and $P_{\rm harm}$ be the the second component of  the projection $P$, i.e. the orthogonal projection onto $\harm$.  By the definition of $P$, the equation $P F(s, \beta,  \mu)=0$ is equivalent to the equations $\lan\phi, F_1(s, \beta,  \mu)\ran =0$ and $P_{\rm harm} F_2(s, \beta,  \mu) =0$. By \eqref{v} and \eqref{w-deriv},  $\psi=s \phi +O(s^3)$ and $\al=O(s^2)$. Using this,  we  expand  $F_i(s, \beta,  \mu)$  in $s$ to the leading order:
\begin{align}\label{F1-expan} 
& F_1(s, \beta,  \mu) =  s(-\Delta_{\an + \beta}   - \mu \mathbb{I}) \phi +O(s^3),\\ 
\label{F2-expan}& F_2(s, \beta,  \mu) = |s|^2[- \im (\bar \phi \n_{a^n} \phi)+ |\phi|^2 \beta]+O(s^4).  \end{align}
 In the next proposition, we show that the leading term on the r.h.s. of \eqref{F2-expan} drops out.  Recall the decomposition  $\n_a=\p_{a}' + \p_{a}''$. 
   \begin{proposition}\label{prop:Jn-co-exact} Let $a^n$ be a constant curvature connection on $E_n$ and $\phi$ is a solution to $\p_{a^n}''\phi=0$.  Then  $ J (\phi, a^n)$ is co-exact and consequently 
  \begin{align} \label{ProjJ} P_{\rm harm}\im (\bar \phi \n_{a^n} \phi)=0. \end{align}     \end{proposition}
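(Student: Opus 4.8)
The plan is to show that $J(\phi, a^n) = \Im(\overline{\phi}\nabla_{a^n}\phi)$ is co-exact, i.e. $J = d^*\gamma$ for some two-form $\gamma$, equivalently $*J$ is exact. Once this is established, \eqref{ProjJ} follows immediately: by the Hodge decomposition, co-exact one-forms are orthogonal to the space $\Om^1_{\rm harm}$ of harmonic one-forms, so $P_{\rm harm} J(\phi, a^n) = 0$. So the real content is the co-exactness.

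First I would use the holomorphicity hypothesis $\partial_{a^n}''\phi = 0$ to rewrite $\nabla_{a^n}\phi$. Since $\nabla_{a^n} = \partial_{a^n}' + \partial_{a^n}''$ and the $(0,1)$-part kills $\phi$, we have $\nabla_{a^n}\phi = \partial_{a^n}'\phi$, a $(1,0)$-form. Therefore $\overline{\phi}\nabla_{a^n}\phi = \overline{\phi}\,\partial_{a^n}'\phi$ is a $(1,0)$-form, and I would compute its imaginary part as $\frac{1}{2i}(\overline{\phi}\,\partial_{a^n}'\phi - \phi\,\overline{\partial_{a^n}'\phi}) = \frac{1}{2i}(\overline{\phi}\,\partial_{a^n}'\phi - \phi\,\partial_{a^n}''\overline{\phi})$. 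The key observation is that $\partial_{a^n}'\phi$ and $\partial_{a^n}''\overline{\phi}$ are, up to the metric, what you get by applying $\partial$ and $\overline{\partial}$ to the scalar $|\phi|^2$ once you trivialize locally: writing $\phi$ locally in a holomorphic gauge where $\partial_{a^n}'' = \overline{\partial}$, the holomorphicity $\overline{\partial}\phi = 0$ gives $\partial|\phi|^2 = \partial(\overline{\phi}\phi) = \phi\,\partial\overline{\phi} + \overline{\phi}\,\partial\phi$, but $\partial\overline{\phi} = \overline{\overline{\partial}\phi} = 0$ only in a unitary-holomorphic trivialization — here the hermitian metric on the fibers enters and $\partial_{a^n}'$ is $\partial$ twisted by $\partial \log h_{\rm fib}$. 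The clean way is: on a line bundle with hermitian metric, $\partial_{a^n}'\phi = h_{\rm fib}^{-1}\partial(h_{\rm fib}\phi)$ in a holomorphic frame, so $\overline{\phi}_{k}\partial_{a^n}'\phi = \overline{\phi}h_{\rm fib}\,h_{\rm fib}^{-1}\partial(h_{\rm fib}\phi)$, and one recognizes $|\phi|^2_k = h_{\rm fib}|\phi|^2$ (the pointwise norm), giving $\overline{\phi}\nabla_{a^n}\phi = \partial(|\phi|_k^2) - |\phi|_k^2\,\partial\log h_{\rm fib} = \partial|\phi|_k^2 + i\,a^n{}'|\phi|_k^2$ where $a^n{}'$ is the $(1,0)$-part of the connection form. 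Hence $J(\phi, a^n) = \Im\big(\partial|\phi|_k^2\big) + a^n{}_{\Im}|\phi|_k^2$ schematically; the point is that $J$ is expressed through $d|\phi|_k^2$ (an exact form) and the constant-curvature connection $a^n$.

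The cleanest route, and the one I would actually write, avoids local frames: combine Proposition \ref{prop:Delta_a-part_a}/\ref{prop:Delta_a-repr} with the identity $J(\phi,a^n) = \frac{1}{2}*d|\phi|^2 + (\text{something involving } F_{a^n})$, or better, observe directly that $d*J(\phi,a^n)$ — which Proposition \ref{prop:J-co-closed} already tells us vanishes, since $\phi$ solves the linearized first equation with $\mu = b$ — is not enough; I need the stronger statement. So I would instead show $J(\phi,a^n)$ is co-exact by exhibiting its potential: there is a scalar $f$ (essentially $|\phi|_k^2$) and the constant-curvature relation $F_{a^n} = b\omega$ such that $*J(\phi,a^n) = \frac{1}{2}d f + b f \cdot(\text{const})$... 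The honest statement is $*J = \frac12 df$ when computed correctly, because the $|\phi|^2 a^n$ term contributes $*$ of something exact up to $F_{a^n}$-terms which vanish on taking $*$ against the volume form. I expect the main obstacle to be precisely this bookkeeping: tracking how the hermitian fiber metric $h_{\rm fib}$ and the connection form $a^n$ interact so that the non-exact-looking piece $|\phi|^2 a^n$ either cancels or combines into a total derivative using $\partial_{a^n}''\phi = 0$ and $F_{a^n} = b\omega$.

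Finally, once co-exactness ($J = d^*\gamma$, equivalently $*J = d\eta$ for a function $\eta$) is in hand, I invoke the Hodge decomposition $\Om^1 = d\Om^0 \oplus d^*\Om^2 \oplus \Om^1_{\rm harm}$ orthogonally; since $J \in d^*\Om^2$ and $\harm \subset \Om^1_{\rm harm}$, we get $\langle J, \beta\rangle = 0$ for every $\beta \in \harm$, which is exactly $P_{\rm harm}\,\Im(\overline{\phi}\nabla_{a^n}\phi) = 0$. I would also note as a sanity check that this is consistent with Proposition \ref{prop:J-co-closed}: co-exact forms are in particular co-closed, so the present proposition refines the earlier one in the special case $\psi = \phi$, $a = a^n$.
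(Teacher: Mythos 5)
Your overall strategy is the paper's: use $\p_{a^n}''\phi=0$ to reduce $\nabla_{a^n}\phi$ to $\p_{a^n}'\phi$, show that $\overline{\phi}\,\p_{a^n}'\phi$ is $\p$ of a real function so that its imaginary part is $*d$ of that function (hence co-exact), and then get \eqref{ProjJ} from the orthogonality of co-exact and harmonic forms in the Hodge decomposition. The first and last steps are fine. But the middle step --- the only nontrivial one --- is left as an acknowledged gap: you write that you ``expect the main obstacle to be precisely this bookkeeping'' of how the term $|\phi|^2 a^n$ combines into a total derivative, and you never resolve it. As written, the proposal does not prove co-exactness; it only conjectures that the connection term goes away.

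The resolution is a one-line algebraic identity, not a bookkeeping problem. Work in a \emph{unitary} frame (so $|\phi|^2=\overline{\phi}\phi$ already is the pointwise norm; no fiber-metric factor $h_{\rm fib}$ enters, and your detour through a holomorphic trivialization is unnecessary). Writing $\p_{a}'=\p+ia_c$ and $\p_{a}''=\overline{\p}+i\bar a_c$ with $a$ real, one has
\begin{align*}
\overline{\phi}\,\p_{a^n}'\phi \;=\; \overline{\phi}\,\p\phi + i a^n_c|\phi|^2
\;=\; \p|\phi|^2 - \phi\bigl(\p - i a^n_c\bigr)\overline{\phi}
\;=\; \p|\phi|^2 - \phi\,\overline{\p_{a^n}''\phi}
\;=\; \p|\phi|^2 ,
\end{align*}
because $(\p-ia^n_c)\overline{\phi}$ is exactly the complex conjugate of $\p_{a^n}''\phi=0$. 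So the ``non-exact-looking'' piece $ia^n_c|\phi|^2$ is absorbed by the conjugate of the holomorphicity equation; in particular the constant-curvature hypothesis and $F_{a^n}=b\omega$ play no role in this step. Then $\Im(\overline{\phi}\nabla_{a^n}\phi)=\Im(\p|\phi|^2)=\tfrac12 *d|\phi|^2$, which is co-exact, and \eqref{ProjJ} follows as you say. With this identity inserted, your argument closes and coincides with the paper's proof.
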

\begin{proof}  
Using that $\n_a=\p_{a}' + \p_{a}''$ and that $\phi$ is a solution to $\p_{a^n}''\phi=0$, we find
\begin{align}
\Im(\overline{\phi}\nabla_{a^n}\phi) &= \Im(\overline{\phi}[\partial_{a^n}' + \partial_{a^n}'']\phi) \\
 &= \Im(\overline{\phi}\partial_{a^n}'\phi).
\end{align}
Next,  we can proceed either locally or by lifting to the universal cover and using 
 $\n_A=\p_{A_c}' + \p_{A_c}''$, where   $A_c:=\frac12 (A_1-iA_2) dz,\ \partial_{A_c}' = \partial + i A_c$   and $\p_{A_c}'' = \overline{\partial} + i \bar A_c = -\p_{A_c}^*$ (see Appendix \ref{sec:general-cover} and equations \eqref{pA} and \eqref{compl-conn}). We take the former route.

Using that $\overline{\phi}\partial\phi =\partial|\phi|^2 - \phi \partial \overline{\phi}$, we find $\overline{\phi}\partial_{a^n}'\phi= \partial|\phi|^2 - \phi (\partial - i a^n)\overline{\phi} =\partial|\phi|^2$. Next,  $\Im \p f =\frac12 * d f$ (to see this we recall that locally or on the universal cover, we have $\p := \frac{\partial}{\p z}\otimes d z$, where, as usual, $ \frac{\partial}{\p z}:=  \p_{z}:=(\partial_{x_1} - i \partial_{x_2})/2$, and $*^2=-\one$, $*(dx_1) = dx_2$ and $*(dx_2) = -dx_1$). The last two relations give
\begin{align}\label{J-co-exact} \Im(\overline{\phi}\partial_{a^n}'\phi) 
&= \Im(\partial|\phi|^2) = \frac12 *d|\phi|^2 = -\frac12 d^*(*|\phi|^2)
\end{align}
Hence  $ J (\phi, a^n)$ is co-exact. \end{proof} 
Hence using  \eqref{F1-expan} - \eqref{F2-expan}, \eqref{ProjJ}, $\Delta_{a^n + \beta}  \phi=\Delta_{a^n}  \phi + O(s\beta) $ and $(-\Delta_{a^n} - b) \phi=0$, we find
\begin{align} \label{PF-expan}
P F(s, \beta, \mu) = \big(O(s\beta) + & s (\mu - b) \phi 
+O(s^3),\\  \label{PF-expan'}&|s|^2P_{\rm harm}(|\phi|^2 \beta)+O(s^4)\big). \end{align}

Recall that $\harm$ is a $2g-$dimensional space. We fix an orthonormal basis $\{\omega_i\}_{i=1}^{2g}$ for $\harm$ so that $\beta=\sum_i t_i\omega_i$  and let  $\ut:=(t_1, \dots, t_{2g})$. Then 
  \[P (\psi, \alpha)=(s\phi, \sum_i t_i \omega_i).\] 
    Then the solution for $(\ref{perp})$ can be written in the form of $w\equiv w(s, \ut, \mu)$, 
 for $s, \ut, \mu$ in a neighbourhood of $(0, 0, b)$. 

 Let  $v(s, \ut, \mu) := P(\psi, \alpha)=(s\phi, \sum_i t_i\omega_i)$. Substituting the solution $w=w(s, \ut, \mu) = (\psi^\perp(s, \ut, \mu), \alpha^\perp(s, \ut, \mu))$ to \eqref{perp} into equation \eqref{ker}, we arrive at the equation $G(s, \ut, \mu) =0$, where
\begin{align}
G(s, \ut, \mu) := PF(v(s, \ut, \mu) + w(s, \ut, \mu), \mu). 
\end{align}
Let  $G=(G_1, G_2)$. The equations $G_1(s, \ut, \mu)=0$ and $G_2(s, \ut, \mu)=0$ are equivalent to the equations $\lan\phi, F_1(s, \ut, \mu)\ran =0$ and $\lan\om_i, F_2(s, \ut, \mu)\ran =0, i=1, \dots, 2g,$ which, after using \eqref{PF-expan} - \eqref{PF-expan'} and division by $s$ and $|s|^2$, respectively, give 
\[(b-\mu)\|\phi\|^2+O(|t|) +O(s^2)=0,\]\[ \sum_j B_{ij} t_i 
  +O(s^2)=0, \forall i,\]
where $ B_{ij} := \langle \omega_j, |\phi|^2 \omega_i \rangle_X$ and $O(s^2)$ stand for a remainder uniformly bounded, together with its derivatives in $\ut, \mu$, by $C s^2$.  
Clearly, the matrix $ B_{ij}$ is strictly positive and therefore is invertible. 
Hence, by the implicit function theorem, the latter system of equations has a unique solution for $(\mu, t)$ of the form 
\begin{align} \label{mu-t-solns}\mu=b+O(s^2),\  
t= 
 O(s^2). 
 \end{align}
Thus, equation \eqref{PF-eq} (or $G(s, \ut, \mu) =0$) has a solution for $\mu$ and $\ut$ as a function of $s$. Together with the previous conclusion, this gives the solution to  the equation $F(u, \mu) =0$ for $u:=(\psi, \alpha)$, $\mu$ and $\ut$ in terms of $s$. 
 Thus we have proven
  
\begin{proposition} \label{prop:bifurc-sol}
 For $s$ sufficiently small, 
the equation $F(u, \mu) =0$ has a family of solutions of the form $u_s=(s \phi +O(s^3), O(s^2)),\  \mu_s=b\kappa^2+O(s^2)$. 
By Corollary \ref{cor:solGLE}, 
   $u_s$ solve \eqref{GLE''}. 
     \end{proposition}
Therefore \eqref{GLE'} has a solution of the form \eqref{sol-general'} with the properties stated in Theorem \ref{thm:GLE-RS-exist}, i.e. that the new solutions bifurcate from the trivial one. 
Next, we prove \eqref{s-r}. It will follow from 

\begin{proposition} \label{prop:leadingorder}
Let $b:=2\pi n/|X|$. The bifurcating solutions,  $(\psi_s,\ a_s,\ \mu_s)$, constructed in Proposition \ref{prop:bifurc-sol}, 
have the following expansions 
\begin{equation} \label{epsexpansions}
\psi_{s} = s\phi+ O(s^3),\ a_s =a^n +  s^2 a_{1} +  O(s^4), \ \mu_{s} = b \kappa^2 + s^2\mu_1 + O(s^4),  
\end{equation}
where the first and the second remainder are understood in the sense of the norms in 
$\mathscr{H}^2_{n}$ and $\vec{\mathscr{H}^2_{n}}$, respectively (with the first and second derivatives of the remainders obeying similar estimates). Moreover, $\phi$, $a_{1}$ and $\mu_1$ satisfy the equations 
\begin{align} \label{phi-a1-eqs} 
-\Delta_{a^{n}} \phi =  b \phi\ \mbox{and}\ 
d a_{1}= \frac{1}{2} * (1 
 -  |\phi |^2),\ 
\end{align}
and 
   \begin{equation} \label{lam1}
\mu_1=\left[\frac{1}{2} + \left(\kappa^2-\frac{1}{2}\right)\beta(a^n)\right], 
\end{equation}
where 
$\beta(a^n)$ is the Abrikosov function given by \eqref{beta}. 
   \end{proposition}
\begin{proof} The Lyapunov-Schmidt arguments of the main proof (more precisely, the equations $w = - (L^\perp)^{-1} N(v, w),\ N=(N_1, N_2)$ and $N_2=-J(\psi, \al)$) give \eqref{epsexpansions}. Plugging \eqref{epsexpansions} into \eqref{GLE'} 
and taking $s \rightarrow 0$ gives the first equation in \eqref{phi-a1-eqs} 
 and
\begin{equation}
\label{a1-eq'}
d^* d a_{1} =  \Im (\bar{\phi} \nabla_{a^{n}}
\phi ).
\end{equation}
This together with \eqref{J-co-exact} gives $d^* (d a_{1}+ \frac{1}{2}  *|\phi|^2)=0$. Hence 
\begin{align} \label{a1-eq''} 
d a_{1}=h -\frac{1}{2} * |\phi |^2,\ 
\end{align}
where $h$ is a two form obeying $d^* h=0$ and therefore $h$ is constant. Since $\int_X d a_{1} =0$, we have $h=*\frac12 $, which gives the second equation in \eqref{phi-a1-eqs}. 

Now we prove \eqref{lam1}. First we note that by the self-adjointness of  the operator $-\Delta_{a} $,   expansions \eqref{epsexpansions} and $(-\Delta_{a^n}  - b)\phi=0$ (see \eqref{phi-a1-eqs}. 
\[\lan \phi,  (-\Delta_{a} - \mu)\psi \ran =s^2[ \lan \phi,  2i a_{1}\cdot\n_{a^{n}}\psi \ran -\mu_1 \lan \phi,  \psi \ran] +O(s^4).\]  
Next, we multiply the first equation in \eqref{GLE'}
scalarly (in $L^2(X)$) by $\phi$, use the above relation, 
substitute the expansion for $\psi$ in \eqref{epsexpansions} and take $s =0$, to obtain
  \begin{align} \label{solvcond'}
		 \lan \phi,  2i a_{1}\cdot\n_{a^{n}}\phi \ran -\mu_1 \lan \phi,  \phi \ran+ \kappa^2\lan \phi,  |\phi|^2 \phi\ran=0.
	\end{align}
	This expression implies that  the imaginary part of  the first term on the left hand side of \eqref{solvcond'} is zero. (We arrive at the same conclusion by integrating by parts - using the (gauge-) periodicity of  $\psi_{0}$ and $ a_{1}$ -  and using that $a_{1}$, like $\al$, is co-closed, i.e. $d^*a_{1}=0$.)    Therefore
    \begin{align*}
  \lan \phi,  2i a_{1}\cdot\n_{a^{n}}\phi \ran =       2i \int_{X} \bar{\phi} a_{1} \wedge *\n_{a^{n}}\phi 
                    &= -2 \int_{X} a_{1} \wedge * \Im( \bar{\phi} \n_{a^{n}}\phi ) .  
         \end{align*}
Using the second equation in \eqref{GLE'} in the last term and integrating by parts, we obtain $ \lan \phi,  2i a_{1}\cdot\n_{a^{n}}\phi \ran = -2  \lan  a_{1}, d^* da_{1} \ran = -2 \lan d a_{1}, da_{1} \ran.$ Next, using \eqref{phi-a1-eqs} 
gives furthermore 
\[ \lan d a_{1}, da_{1} \ran =  \lan d a_{1}, \frac{1}{2}\langle |\phi |^2 \rangle -\frac{1}{2}|\phi |^2 \ran\]\[ = - \lan d a_{1}, \frac{1}{2}|\phi |^2 \ran = - \lan \frac{1}{2}\langle |\phi |^2 \rangle -\frac{1}{2}|\phi |^2, \frac{1}{2}|\phi |^2 \ran\]
Thus we conclude that 
 \begin{align} \label{da1-norm}  \frac{1}{|X|}\|da_{1} \|^2 = - \frac{1}{4}\langle |\phi |^2 \rangle^2 + \frac{1}{4}\lan |\phi |^4 \ran.\end{align}
The last two relations imply
\[  \frac{1}{|X|}\lan \phi,  2i a_{1}\cdot\n_{a^{n}}\phi \ran =  \frac{1}{2}\langle |\phi |^2 \rangle^2 - \frac{1}{2}\lan |\phi |^4 \ran.\]
 This equation together with \eqref{solvcond'}, {\bf the relations $\lan \phi,  \phi \ran=|X|\langle |\phi |^2 \rangle$ and $ \kappa^2\lan \phi,  |\phi|^2 \phi\ran=|X|\langle |\phi |^4 \rangle$} and the definition \eqref{beta} gives \eqref{lam1}. \end{proof}

Eqn \eqref{lam1} fixes the parameter $s$ uniquely up to the normalization of $\psi_0$. Indeed, we observe that the third equation in \eqref{epsexpansions} implies   $s^2 = \frac{\mu-b \kappa^2}{\mu_1}+O((\mu- b)^2)$, which, together with 
 \eqref{lam1} and the normalization $\langle |\phi |^2 \rangle=1$, 
 yields  
\begin{equation}    \label{s}
         s^2 
       = \frac{\mu-b \kappa^2}{ (\kappa^2 -  \frac{1}{2})\beta(a^n) +\frac{1}{2}} +O( (\mu-b \kappa^2)^2). 
    \end{equation}
This implies \eqref{s-r}.
  This proves Theorem \ref{thm:GLE-RS-exist-cond}. $\qquad \qquad \Box$
 
 \begin{corollary} \label{cor:leadingorder-unresc}
The unrescaled bifurcating solutions,  $(\psi_s,\ a_s,\ r_s)$, constructed in Theorem \ref{thm:GLE-RS-exist} 
 have the following expansions 
\begin{equation} \label{epsexpansions-unresc}
\psi_{s} = s\phi+ O(s^3),\ a_s =a^n +  s^2 a_{1} +  O(s^4), \ r_{s} = b + s^2 r_1 + O(s^4),  \end{equation}
where  $\phi$ and $a_{1}$ satisfy the equations in \eqref{phi-a1-eqs} 
  and $r_1=\mu_1/ \kappa^2$, with $\mu_1$ given by \eqref{lam1}.
     \end{corollary}

\section{Proof of Theorem \ref{thm:ener-asymp}} \label{sec:en-expan}

   Multiplying the first equation in \eqref{GLE} scalarly by $\psi$ and integrating by parts gives
   \begin{equation*}
  		\lan \nabla_a \psi, \nabla_a \psi\ran = \kappa^2 \int_{X} \left(|\psi|^2 - |\psi|^4\right).
	\end{equation*}
Substituting this into the expression, \eqref{GLen}, for the energy, 
 we find
   \begin{equation}   \label{asymp:Elambda'}
		\mathcal{E} (\psi, a)  = - \frac{\kappa^2}{2} \| |\psi|^2\|^2 + \|d a\|^2+  \frac{\kappa^2}{2} |X|.
            \end{equation}
    Using the expansions  \eqref{epsexpansions-unresc} and the facts that $d a^1 =2\pi  \om$ and $\langle d a_1\rangle= 0$  gives
     \begin{align}
    \label{asymp:a-en}
	   \|d a\|^2 &= \|d a^n\|^2 +2 s^2 \langle d a^n, d a_1 \rangle +s^4\|d a_1\|^2 + O(s^6)\\
	   & = (2\pi)^2 |X| +s^4\|d a_1\|^2 + O(s^6).
            \end{align}
 The last two relations, together with the first equation in \eqref{epsexpansions} and equation \eqref{da1-norm},  and the normalization $\langle |\phi |^2 \rangle=1$ give 
  \begin{equation}     \label{Eexpan-s}
    \frac{1}{|X|}    \mathcal{E} (\psi_{s}, a_{s}) =         \frac{\kappa^2}{2} +(2\pi)^2 
    - \frac12 s^4 [(\kappa^2 -  \frac{1}{2})\beta(a^n) + \frac{1}{2}] 
          +  O(s^6).
           \end{equation}
This together with \eqref{s} implies \eqref{Eexpan}.  
$\qquad \qquad \Box$ 

\subsection*{Acknowledgements}
The first and fourth authors' research is supported in part by NSERC Grant No. NA7901. During the work on the paper, the fourth author enjoyed the support of the NCCR SwissMAP. 
The second author's work is supported by NSF Grant No. DMS 1615921.
The third author acknowledges the University of Saskatchewan for a New Faculty Recruitment Grant.

\appendix

\section{Analysis on the universal cover} \label{sec:equiv-formul}

\subsection{Generalities}  \label{sec:general-cover}

To lift \eqref{GLE} to the universal cover, $\tilde X$, of $X$,  let $\pi_1(X)$  act on $\tilde X$ by deck transformations, whose group is denoted by $\Gamma$, and let   $\rho$ be an automorphy map, i.e. $\rho: \Gamma \times \tilde X \rightarrow \mathbb{C}^*$ and satisfies the co-cycle relation 
\begin{align}\label{co-cycle} \rho(\g \cdot \g', z) = \rho(\g, \g'  z)\rho(\g', z).\end{align} 
Then any holomorphic line bundle, $E$ over $X$ is isomorphic  to one of the form $E_\rho = \tilde E / \rho$, where $\tilde E / \rho$ is a factor space according to the action of $\G$ 
\begin{align}\label{Gam-action} (z, \psi) \ra (\gamma z, \,\, \rho(\gamma, z) \psi),\ \forall \g\in \G.\end{align}
 The Chern class, $c_1(\rho)$, of $\rho$ equals the degree of 
  $E_\rho$ (see \cite{Gun2}, Theorem 2a for the definition of $c_1$). 


Pulling back the section $\psi$ and connection form $ a$ to $\tilde{E}$, we arrive at the section $\Psi$  and connection form $ A$ on  $\tilde{E}$, which satisfy the relations
\begin{equation}\label{gauge-per-Gam}\g^*\Psi =  \rho_\g \Psi,\ \g^*A = A - i \rho_\g^{-1} d \rho_\g ,\ \forall \g\in \pi_1(X), \end{equation}
where  $\g^*$ is the pull back of sections and connections by $\g$ 
and where we have written $\rho(\g, z) \equiv \rho_\g(z) $. 
Conversely, if $\Psi$ and $A$ are a section 
and a connection form  in  $\tilde{E}= \tilde X \times \mathbb{C}$, then $\Psi$ and  ${A}$ project to a section $\psi$ and a connection form $ a$ on $E$ if and only if they satisfy \eqref{gauge-per-Gam}, with the automorphy factor $\rho$ corresponding to $E$.
 Moreover, $F_A$ on $\tilde{E}$ descends to $F_a$ on $E$.

We say that a pair $(\Psi,  A)$ is {\it $\rho-$equivariant}  (or {\it gauge $\pi_1-$invariant}) iff it satisfies \eqref{gauge-per-Gam}  for some  automorphy factor $\rho$. 

For convenience of references we summarize a part of the above discussion as 

\begin{proposition}\label{prop:corresp} There is a one to one correspondence between sections and connections on $E$ and   $\rho-$equivariant sections and connections on $\tilde{E}= \tilde X \times \mathbb{C}$ (i.e $\rho-$equivariant functions and one-forms on $\tilde X$). \end{proposition}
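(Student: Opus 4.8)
Since Proposition \ref{prop:corresp} merely packages the discussion preceding it, the plan is simply to make the two maps of the claimed bijection explicit and to verify they are mutually inverse; all of this is an exercise in unwinding \eqref{Gam-action} and \eqref{gauge-per-Gam}.

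First, fix an isomorphism $E\cong E_\rho=\tilde E/\rho$ and let $\pi:\tilde X\ra X$ be the universal covering map. In one direction, given a section $\psi$ and a connection form $a$ on $E$, I would set $\Psi$ and $A$ to be their pull-backs to $\tilde E=\tilde X\times\C$ under the bundle morphism covering $\pi$; since $\tilde E$ carries the trivialization $\tilde X\times\C$, the section $\Psi$ becomes a $\C$-valued function on $\tilde X$ and $A$ a scalar one-form on $\tilde X$. Being pulled back from $X$, the pair $(\Psi,A)$ is invariant under the $\G$-action on $\tilde E$; expressing that invariance in the chart $\tilde X\times\C$ via \eqref{Gam-action} yields precisely the equivariance relations \eqref{gauge-per-Gam}: invariance of a section forces $\Psi(\g z)=\rho_\g(z)\Psi(z)$, and the one-form representing $\nabla_a=d+ia$ in the trivialization must obey the standard transition rule under the multiplier $\rho_\g$, namely $\g^*A=A-i\rho_\g^{-1}d\rho_\g$. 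The cocycle identity \eqref{co-cycle} guarantees these relations are consistent over composites $\g\cdot\g'$.

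In the other direction, given a $\rho$-equivariant pair $(\Psi,A)$ on $\tilde E$, I would let it descend: the first relation of \eqref{gauge-per-Gam} says $\Psi$, as a function on $\tilde X$, respects the equivalence $(z,\Psi(z))\sim(\g z,\rho_\g(z)\Psi(z))$ defining $E_\rho$, hence determines a section $\psi$ of $E_\rho\cong E$; the second relation says the local one-forms $A$ glue, under the transition functions $\rho_\g$, into a global connection $\nabla_a$ on $E_\rho$, and then $F_A=dA$ descends to $F_a$ automatically since $d$ commutes with pull-back. Because $\pi$ is a surjective local diffeomorphism, descent of a pulled-back pair returns the original pair (surjectivity of $\pi$), and pull-back of a descended pair returns the original (by the very construction of the quotient $\tilde E/\rho$); so the two assignments are inverse to each other.

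The whole thing is bookkeeping, and the only step that needs a moment's care is the connection transformation rule: one should check that, with the convention $\nabla_a=d+ia$, changing the local trivialization by a nowhere-zero multiplier $g$ sends $a\mapsto a+ig^{-1}dg$, so that the deck group $\G=\pi_1(X)$ acting with multipliers $\rho_\g$ produces exactly the second relation of \eqref{gauge-per-Gam}; together with the observation that $\tilde X\ra X$ is a normal covering with deck group $\pi_1(X)$, so that $\G$-equivariance is precisely the condition for descent, this completes the argument.
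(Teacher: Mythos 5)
Your proposal is correct and follows essentially the same route as the paper, which presents this proposition explicitly as a summary of the immediately preceding discussion: pull-back of $(\psi,a)$ to the trivialized $\tilde E$ yields a $\rho$-equivariant pair satisfying \eqref{gauge-per-Gam}, and conversely an equivariant pair descends through the quotient \eqref{Gam-action}, the two operations being mutually inverse because $\pi$ is a normal covering with deck group $\G$. Your added care about the sign convention in the connection transformation rule is consistent with the paper's gauge-transformation formula \eqref{gauge-transf}.
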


It is convenient to formulate the next property of the correspondence $(\psi, a)\Leftrightarrow (\Psi, A)$ as an elementary proposition:

\begin{proposition}\label{prop:proj-lift-sol} 
The form of equations (\ref{GLE}) does not change when lifted to the universal cover.  $(\psi,a)$ solve $(\ref{GLE})$ on $E$ iff its lift $(\Psi, A)$ to 
$\tilde{E}$ solves $(\ref{GLE})$ on $\tilde E$. 
\end{proposition}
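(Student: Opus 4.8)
The plan is to reduce the statement to the naturality of every operator appearing in \eqref{GLE} under the universal covering projection, once the latter is turned into a local isometry. Concretely, fix the automorphy factor $\rho$ corresponding to $E$ and let $\pi\colon\tilde X\to X$ be the covering map; by Proposition \ref{prop:corresp} the lift of $(\psi,a)$ is the pair $(\Psi,A)=(\pi^*\psi,\pi^*a)$ (read in the pulled-back trivialization of $\tilde E=\tilde X\times\C$), and every $\rho$-equivariant pair on $\tilde E$ arises in this way. I would equip $\tilde X$ with the pulled-back metric $\tilde h=\pi^*h$ and $\tilde E$ with the pulled-back hermitian structure $\pi^*k$, so that $\pi$ becomes an orientation-preserving local isometry of hermitian line bundles; the proposition then amounts to the assertion that $\pi^*$ intertwines, term by term, the two sides of each equation of \eqref{GLE}.

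I would verify this operator by operator. First, $\pi^*$ commutes with $d$ and, since $\pi$ is an orientation-preserving local isometry, with the Hodge star; hence $F_A=dA=\pi^*F_a$ and $d^*\pi^*\al=\pi^*(d^*\al)$ on one-forms. Second, in a local frame $\dA\Psi=d\Psi+iA\Psi=\pi^*(d\psi+ia\psi)=\pi^*(\na\psi)$, and, since $\nabla_a^*$ is a first-order differential operator whose coefficients are assembled pointwise from $h$, the volume form and $k$, the same local computation gives $-\Delta_A\Psi=\nabla_A^*\nabla_A\Psi=\pi^*(-\Delta_a\psi)$. Third, the pointwise algebraic operations occurring in \eqref{GLE}---forming $|\psi|^2$, multiplying a section by a scalar function, taking $\Im(\overline{\psi}\na\psi)$---commute with $\pi^*$ trivially. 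Putting these together: applying $\pi^*$ to each equation shows that if $(\psi,a)$ solves \eqref{GLE} on $E$ then $(\Psi,A)$ solves \eqref{GLE} on $\tilde E$; conversely, $\pi$ is surjective and restricts to a diffeomorphism from a neighbourhood of any $\tilde x$ onto a neighbourhood of $\pi(\tilde x)$, so transporting the lifted equations at $\tilde x$ back through this local diffeomorphism yields \eqref{GLE} at $\pi(\tilde x)$, and surjectivity of $\pi$ then gives a solution on $E$.

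The step I expect to be the main---indeed the only---obstacle is the behaviour of the adjoints $\nabla_a^*$ and $d^*$: they are \emph{a priori} defined through the global $L^2$ inner products on $X$, so their compatibility with a merely local isometry is not immediate from the definition. I would settle this by passing to formal adjoints and noting that a formal adjoint is a differential operator whose symbol and lower-order coefficients are built algebraically from the metric data in any coordinate chart and local frame---equivalently, on the surface $X$ one may write $d^*\al=-\star d\star\al$ on one-forms and express $\Delta_a$ through $\star$, $d$ and $a$ as well---so that the orientation-preserving local isometry $\pi$, which pulls all of this data back exactly, intertwines these operators, which is all the argument uses. Finally, as a remark I would note that $\tilde E=\tilde X\times\C$ is topologically trivial because $\tilde X$ is simply connected, so $A$ is a globally defined one-form, $\dA=d+iA$, and \eqref{GLE} on $\tilde E$ is literally \eqref{GLE} with $(\psi,a,h)$ replaced by $(\Psi,A,\tilde h)$; this is the precise sense in which ``the form of the equations does not change''.
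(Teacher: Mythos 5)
Your proof is correct and is exactly the verification the paper has in mind: the paper states this proposition without proof, calling it elementary, and the implicit argument is precisely yours --- equip $\tilde X$ with $\pi^*h$ so that $\pi$ is an orientation-preserving local isometry, note that every operator in \eqref{GLE} (including $d^*=-*d*$ and $\nabla_a^*$) is a local differential operator built pointwise from the metric data, hence intertwined by $\pi^*$, and use surjectivity of the local diffeomorphism $\pi$ for the converse. Your closing remark on why the ``form'' of the equations is unchanged on the trivial bundle $\tilde X\times\C$ is also the intended reading.
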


Dealing with $\rho-$equivariant functions and one-forms on $\tilde X$, rather than with sections and connections on the bundle $E$, is convenient because of the global coordinates on $\tilde X$. Using this we give in Appendix \ref{sec:bundle-holomorphiz} a hands-on proof of Theorem \ref{thm:bundle-holomorphiz}.

Recall that, since $X$ is a Riemann surface of the genus $g\ge 2$, its universal cover, $\tilde X$, can be identified with the Poincar\'e upper complex half plane, $\mathbb{H}$, and  $\pi_1(X)$ with a  Fuchsian group $\G$  (acting on $\mathbb{H}$).  Using the standard co-ordinate $z$ on $\bH$, we see that $\pi_1(X)$ is a subgroup of $PGL(2, \R)$ acting on $\mathbb{H}$ by M\"obius transforms, $\gamma  z = \frac{az + b}{cz + d},\  \g=\left(\begin{smallmatrix}a&b\\c&d\end{smallmatrix}\right)  \in \G$. 

 Recall also that $PGL(2, \R)$ is a group of isometries of $\mathbb{H}$ with the standard hyperbolic metrics $\tilde h=  (\im z)^{-2} |dz|^2$. 
The latter generates the hyperbolic  volume form \[\omega = \frac{i}{2}\Im(z)^{-2} dz\wedge d\overline{z}.\] 
 Finally, recall that the character of $\G$ is a homomorphism $\s: \G  \rightarrow U(1)$. We have the following results proven in Sections \ref{sec:af} and \ref{sec:cc-con-exist}. 

\begin{theorem} \label{thm:aut-factsEn}
For any $n\in \Z$, the map $\rho_{n}: PGL(2, \R)\times \bH\ra U(1)$, given by 
 \begin{align}\label{n-automor}
\rho_{n}(\g, z) & = \biggr[\frac{c\overline{z} +d}{cz + d}\biggl] ^{-\frac{n }{2g-2} },\ \text{ for }\  \g=\left(\begin{smallmatrix}a&b\\c&d\end{smallmatrix}\right)  \in PGL(2, \R), 
\end{align}
is an automorphy factor. 
Consequently, for any Fuchsian group $\G$ and any character $\s: \G  \rightarrow U(1)$, 
 the map $\rho_{n, \s}: \G\times \bH\ra U(1)$ given by   
\begin{align}\label{n-sig-automor}
\rho_{n, \s}(\g, z) & = \s(\g) \rho_{n}(\g, z), 
 \text{ for }\  
 \g \in \G, 
\end{align}
is also an automorphy factor. The Chern class of $\rho_{n, \sigma}$ is $c_1(\rho_{n, \sigma})=n$. 
\end{theorem}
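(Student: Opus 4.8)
The plan is to establish the three assertions in turn; the substance is in showing that the explicit formula defines a factor of automorphy.

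\emph{Reduction to the canonical bundle.} I would begin with the classical factor of automorphy $j(\gamma,z):=cz+d$: it is non-vanishing on $\bH$ (it is a nonzero real number if $c=0$, and off the real axis if $c\ne0$) and satisfies $j(\gamma\gamma',z)=j(\gamma,\gamma'z)\,j(\gamma',z)$, which one checks by a one-line $2\times2$ matrix multiplication, or from the chain rule applied to $(\gamma z)'=\det\gamma\cdot j(\gamma,z)^{-2}$. Complex conjugation gives that $\overline j$ is a factor of automorphy too, hence so is $\mu(\gamma,z):=\overline{j}/j=(c\overline z+d)/(cz+d)$, which is moreover $U(1)$-valued and unchanged under rescaling $\gamma$, so it descends to $PGL(2,\R)\times\bH$. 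The structural point that drives everything else is the geometric interpretation: for the hyperbolic metric $\tilde h=(\Im z)^{-2}|dz|^2$ the one-form $e:=dz/\Im z$ is a unit frame of the canonical bundle $K$, and a direct computation gives $\gamma^* e=\mu(\gamma,z)\,e$; thus $\mu^{-1}$ is the factor of automorphy of $K$ with its hyperbolic hermitian structure (the minus sign being the usual convention relating sections to equivariant functions in \eqref{gauge-per-Gam}). Since $\deg K=2g-2$, this both explains the exponent $-n/(2g-2)$ and sets up the last two steps.

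\emph{The fractional power --- the crux.} One cannot raise a $U(1)$-valued cocycle to the non-integer power $s:=-n/(2g-2)$ naively. Fixing, for each $\gamma$, a continuous branch of $\log\mu(\gamma,\cdot)$ on the simply connected surface $\bH$, the chosen branches satisfy the additive cocycle relation only up to a $\Z$-valued $2$-cocycle $c(\gamma,\gamma')$ on $\G$ (constant in $z$, being continuous and integer-valued), so the naive powers multiply up only to $e^{2\pi i s\,c(\gamma,\gamma')}$, a root of unity that need not be $1$. I would resolve this by identifying the class $[c]\in H^2(\G;\Z)\cong\Z$: through the conformal factor $\Im z$, $\mu$ is equivalent to the holomorphic factor $(\gamma z)'$, whose class is $\pm$ the degree $2g-2$ of $K$ (equivalently, the rotation number of $\mu$ around the relator $\prod_i[a_i,b_i]$ of $\G$, via the Euler-number computation for the hyperbolic structure). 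Hence $s\cdot[c]\in\Z$, so $e^{2\pi i s c}$ is a coboundary in $H^2(\G;U(1))$ and the branches can be corrected by a single $\bH$-independent $U(1)$-cochain, producing an honest $U(1)$-valued cocycle $\rho_n$; the formula in the statement is the representative obtained from the hyperbolic frame and this branch. (Equivalently, since $Jac(X)$ is divisible there is a degree-$n$ line bundle $L$ with $L^{\otimes(2g-2)}\cong K^{\otimes n}$, and $\rho_n$ is its factor of automorphy.) I expect this branch/cohomology bookkeeping --- together with tracking the $PGL(2,\R)$ versus $SL(2,\R)$ ambiguity, i.e.\ the choice of lift of $\G$ into $SL(2,\R)$ --- to be the main obstacle; everything after it is formal.

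\emph{Character twist and Chern class.} Given that $\rho_n$ is a factor of automorphy, so is $\rho_{n,\s}=\s\cdot\rho_n$, because a character $\s:\G\to U(1)$ is itself a ($z$-independent) cocycle: $\rho_{n,\s}(\gamma\gamma',z)=\s(\gamma)\s(\gamma')\rho_n(\gamma,\gamma'z)\rho_n(\gamma',z)=\rho_{n,\s}(\gamma,\gamma'z)\,\rho_{n,\s}(\gamma',z)$. For the Chern class I would use Chern--Weil. The Chern connection of $(K,\tilde h)$ has constant curvature proportional to $\om$, and Gauss--Bonnet gives $\tfrac1{2\pi}\int_X\om=2g-2$ (the hyperbolic area of $X$), recovering $\deg K=2g-2$. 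The natural unitary connection on $E_{\rho_n}$ pulls back, on $\bH$, to $\tfrac{n}{2g-2}$ times that of $K$, hence its curvature is $\tfrac{n}{2g-2}$ times that of $K$, and $c_1(\rho_n)=\tfrac{n}{2g-2}\cdot(2g-2)=n$. Finally the flat factor $\s$ contributes $0$, so $c_1(\rho_{n,\s})=n$; this integral is, up to the factor $2\pi$, exactly the winding number whose integrality made the cocycle relation close in the previous step.
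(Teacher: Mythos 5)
Your argument is correct in substance, but it takes a genuinely different route from the paper's. The paper proves the cocycle identity by direct algebra: setting $\beta=\frac{n}{2g-2}$, it multiplies out $s\cdot t$, factors $\frac{(ce+dg)\overline{z}+(cf+dh)}{(ce+dg)z+(cf+dh)}$ into the two expected fractions, and then distributes the exponent $-\beta$ over the product; the Chern class is obtained by citing Gunning's cocycle formula. You instead identify $\mu(\g,z)=\frac{c\overline{z}+d}{cz+d}$ as the automorphy factor of the canonical bundle in its hyperbolic unitary frame $dz/\Im z$, isolate the fractional power as the real difficulty, and kill the obstruction in $H^2(\G;U(1))$ by observing that the integer $2$-cocycle measuring the failure of the chosen logarithms represents $\pm(2g-2)$ in $H^2(\G;\Z)\cong\Z$, so that multiplication by $\beta$ gives $\mp n\in\Z$; the Chern class then follows from Chern--Weil and Gauss--Bonnet rather than from Gunning's formula. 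What your route buys is exactly the step the paper's computation elides: the identity $(AB)^{-\beta}=A^{-\beta}B^{-\beta}$ is not automatic for non-integer $\beta$, and your cohomological argument is what guarantees the discrepancy $e^{2\pi i\beta k}$ can be removed for a genus-$g$ surface group. Your care also exposes a genuine caveat in the statement as written for all of $PGL(2,\R)$: for $\g=\left(\begin{smallmatrix}0&-1\\1&0\end{smallmatrix}\right)$ one has that $\g^2$ is the identity in $PGL(2,\R)$ and $\g$ fixes $z=i$, yet any branch of $\rho_n(\g,i)=[-1]^{-\beta}$ has square $e^{-2\pi i\beta(2m+1)}\ne 1$ for non-integer $\beta$, so the cocycle relation genuinely fails on torsion elements and the formula is an honest automorphy factor only after restriction to a torsion-free Fuchsian group (possibly up to the $z$-independent branch correction you describe). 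The price of your approach is that it produces the cocycle only up to such a $U(1)$-valued $1$-cochain correction rather than verifying \eqref{n-automor} verbatim, and it leans on standard but unproved inputs ($H^2(\G;\Z)\cong\Z$ and the Euler-class computation for the hyperbolic structure), whereas the paper's calculation, on the elements where it is valid, is elementary and pins down the explicit formula.
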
   

\begin{theorem}\label{thm:conn-const-curv}
For any $n\in \Z$, the connection, $A^n$,  on the trivial bundle $\tilde E := \bH\times \C$, given by
\begin{align}\label{An} A^n = \frac{n }{2g-2} y^{-1}dx,\end{align} 
(a) has a constant curvature with respect to the standard hyperbolic volume form on $\bH$ ($F_{A^n}  = \frac{n }{2g-2} \omega$);   
(b) is equivariant with respect to the automorphy factor \eqref{n-sig-automor} for any Fuchsian group $\G$ and any character $\s: \G  \rightarrow U(1)$; 
 (c)  is  unique up to proper, i.e. with $g=e^{i f},\ f:X\ra \R$,  gauge transformations. \end{theorem}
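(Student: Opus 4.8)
The plan is to verify the three assertions (a), (b), (c) of Theorem \ref{thm:conn-const-curv} by direct computation on $\bH$, using the explicit formula \eqref{An}. Throughout I write $z = x+iy$ with $y = \Im z > 0$, so that the hyperbolic metric is $\tilde h = y^{-2}|dz|^2$ and $\omega = y^{-2}\,dx\wedge dy$, and I abbreviate $c_n := \frac{n}{2g-2}$.

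\textbf{Part (a): constant curvature.} Since $A^n = c_n\, y^{-1}\,dx$ is a real one-form on $\bH$, its curvature is $F_{A^n} = dA^n = c_n\, d(y^{-1}\,dx) = c_n\,(\partial_y y^{-1})\,dy\wedge dx = c_n\, (-y^{-2})\,dy\wedge dx = c_n\, y^{-2}\,dx\wedge dy = c_n\,\omega$. This is a constant multiple of the volume form, so the connection has constant curvature with $b = c_n$, as claimed. (This also matches the Chern--Weil/flux-quantization relation \eqref{flux-quant}: integrating $\frac{1}{2\pi}F_{A^n}$ over a fundamental domain of area $|X| = 2\pi(2g-2)$ by Gauss--Bonnet gives $c_n |X|/(2\pi) = n$, consistent with $c_1(\rho_{n,\sigma}) = n$ from Theorem \ref{thm:aut-factsEn}.)

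\textbf{Part (b): equivariance.} By \eqref{gauge-per-Gam} I must show $\gamma^* A^n = A^n - i\,\rho_{n,\sigma,\gamma}^{-1}\,d\rho_{n,\sigma,\gamma}$ for every $\gamma = \left(\begin{smallmatrix}a&b\\c&d\end{smallmatrix}\right)\in\G$. Since $\sigma(\gamma)$ is a constant in $U(1)$ it drops out of $\rho^{-1}d\rho$, so it suffices to treat $\rho_n$ itself. The key elementary identities under $w = \gamma z = \frac{az+b}{cz+d}$ are $\Im w = \frac{\Im z}{|cz+d|^2}$ and $\frac{dw}{dz} = \frac{ad - bc}{(cz+d)^2}$, hence $dw = \frac{ad-bc}{(cz+d)^2}\,dz$ and, taking real parts, one computes $\frac{d(\Re w)}{\Im w}$ in terms of $z$. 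On the other hand, from \eqref{n-automor}, $\log\rho_n(\gamma,z) = -c_n\big(\log(c\bar z + d) - \log(cz+d)\big) = 2i\,c_n\,\arg(cz+d)$, so $-i\,\rho_{n,\gamma}^{-1}d\rho_{n,\gamma} = -i\,d\log\rho_{n,\gamma} = 2c_n\, d\big(\arg(cz+d)\big)$. The computation then reduces to the single real identity
\begin{equation*}
\frac{d(\Re(\gamma z))}{\Im(\gamma z)} = \frac{d(\Re z)}{\Im z} + 2\, d\big(\arg(cz+d)\big),
\end{equation*}
which I would verify by writing $\arg(cz+d) = \Im\log(cz+d)$ and differentiating, using $y\mapsto y/|cz+d|^2$ and $x\mapsto \Re(\gamma z)$; multiplying through by $c_n$ gives exactly $\gamma^* A^n = A^n - i\,\rho_{n,\gamma}^{-1}d\rho_{n,\gamma}$. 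This is the computational heart of the proof, but it is a finite Möbius-transformation identity with no conceptual obstruction.

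\textbf{Part (c): uniqueness up to proper gauge.} Suppose $A'$ is another connection on $\tilde E$ with constant curvature $c_n\omega$ that is $\rho_{n,\sigma}$-equivariant (for possibly another character; but fixing the Chern class $n$ and noting two characters differ by a homomorphism $\G\to U(1)$ realized by a proper gauge transformation, one reduces to the same $\rho_{n,\sigma}$). Then $\beta := A' - A^n$ is a real one-form on $\bH$ with $d\beta = 0$, and by equivariance $\gamma^*\beta = \beta$ for all $\gamma\in\G$, so $\beta$ descends to a closed real one-form on $X$. A proper gauge transformation $g = e^{if}$ changes $A^n$ by $i g^{-1}dg = -df$, i.e. by an exact form; so it remains to show the closed form $\beta$ on $X$ is exact, i.e. that its de Rham class vanishes. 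This is where I would invoke that both $A^n$ and $A'$ have the \emph{same} curvature $c_n\omega$ and represent the \emph{same} degree-$n$ bundle: the difference of the associated $U(1)$-connections on $E_n$ has trivial holonomy character (equivalently, the real cohomology class of $\beta$ is constrained by the condition that periods of $\beta$ must be such that $e^{i\oint\beta}$ is trivial — which forces $\beta$ exact among \emph{proper} gauge orbits, since the torus of flat $U(1)$-connections modulo proper gauge is exactly accounted for by the character $\sigma$ that we have fixed). Concretely: closed one-forms on $X$ modulo exact ones is $H^1(X;\R)\cong\R^{2g}$, and changing the character $\sigma$ sweeps out the real torus $H^1(X;\R)/H^1(X;\Z)$ of flat connections; having fixed $\sigma$ pins down $\beta$ modulo exact forms (modulo the integer lattice, which is itself realized by large proper gauge transformations). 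Hence $A'$ and $A^n$ differ by a proper gauge transformation. I expect this last bookkeeping — carefully separating the roles of the character $\sigma$, the integer period lattice, and genuinely exact forms — to be the most delicate part, though all the ingredients are standard Hodge theory on the compact surface $X$ together with Proposition \ref{prop:corresp}.
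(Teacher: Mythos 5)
Your parts (a) and (b) are correct and amount to the paper's own computation recast in real notation: the paper complexifies, writing $A^n_c=\frac{n}{2g-2}\frac{1}{2\Im z}\,dz$, computes $\bar\partial A_c=\frac b2\omega$ and $s^*\bar A_c=\bar A_c+\bar\partial\tilde f_s$ with $\tilde f_s$ a branch of $i b\ln(\gamma\bar z+\delta)$, and then takes real parts; your direct computation of $dA^n$ and your reduction of equivariance to the identity $\frac{d(\Re(\gamma z))}{\Im(\gamma z)}=\frac{dx}{y}+2\,d\arg(cz+d)$ is the same calculation with $2\Re$ applied at the start rather than at the end. (You do leave that identity unverified; it is equivalent to the paper's displayed chain of equalities for $s^*\bar A_c$, so it does hold, but it is the entire content of (b) and should be carried out.)

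Part (c) is where you diverge from the paper, and your route has a genuine gap. You descend the closed form $\beta=A'-A^n$ to $X$ and then claim that fixing the automorphy factor $\rho_{n,\sigma}$ "pins down $\beta$ modulo exact forms." It does not: the automorphy factor determines the bundle $E_{n,\sigma}$, not the connection, and \emph{any} $\Gamma$-invariant closed one-form $\beta$ added to $A^n$ preserves both the curvature $\frac{n}{2g-2}\omega$ and the equivariance relation \eqref{gauge-per-Gam} with the \emph{same} $\rho_{n,\sigma}$ (since $\gamma^*\beta=\beta$ contributes nothing to the inhomogeneous term). Taking $\beta$ harmonic with nonzero periods therefore produces a constant curvature, $\rho_{n,\sigma}$-equivariant connection that is \emph{not} $A^n+df$ for any single-valued $f$ on $X$; the moduli of such connections modulo gauge maps $e^{if}$, $f:X\to\R$, is all of $H^1(X,\R)$, not a point. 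So the statement you are trying to prove — uniqueness up to gauge with $f$ single-valued on the compact surface — is false, and no bookkeeping of $\sigma$ versus the integer period lattice will rescue it. The paper's proof avoids the issue entirely by staying on the universal cover: on the simply connected $\bH$, $d(A'-A^n)=0$ immediately gives $A'-A^n=df$ with $f:\bH\to\R$, unique up to a constant. That is the only reading under which (c) is true, and the "$f:X\to\R$" in the statement should be understood as $f:\tilde X\to\R$ (such an $f$ generally changes the automorphy factor by the character $e^{i(f(\gamma z)-f(z))}$, which is exactly the residual $H^1(X,\R)$ freedom your counterexample detects).
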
  

The projection of $A^n$ to $E_{n, \s}$ gives the distinguished connection $a^{n, \s}$ on $E_{n, \s}$.

\begin{remark} \label{aut-char}
 As mentioned above one may associate to an automorphy $\rho$ the bundle $E_\rho$. In fact this extends to the non-uniformized setting. The ingredients for this are the characters $\sigma$ on $\G$ and the degree $n$ which we may think of as a character on the abelian group $\mathbb{Z}$. In the general setting the automorphy factors of the uniformized case become unitary characters on the product $\frak{S} = \mathbb{Z} \times \G$. There is a 1:1 correspondence between unitary characters and holomorphic line bundles on $X$ which gives one yet another way to describe the Picard group: 

$Pic^{(n)}(X)$ = group of unitary characters $\rho$ on $\frak{S}$ restricted to $c_1(\rho) = n$. 

Hence, it is natural to label holomorphic line bundles as $E_{n, \sigma}$ where $\sigma$ is a unitary character of $\G$.

Because these characters take their values in an abelian group, $U(1)$, they only depend,  on the abelianization of $\pi_1(X)$ which we will denote by $\G_{\rm abel}$.  As with $\pi_1(X)$ itself,  $\G_{\rm abel}$ depends on the basepoint for the paths. This appears in the explicit realizations of these characters as, for instance, in \eqref{n-automor} which depends on $z$, unless $n = 0$.
 However, abstractly, the abelianization is isomorphic to the first homology group: 
$\G_{\rm abel} \simeq H_1(X, \mathbb{Z})$.
\end{remark}
\begin{remark} \label{centext}
In the more general setting of vector bundles, characters are replaced by representations and when $c_1 \ne 0$ one must consider representations of central extensions of $\pi_1(X)$ \cite{AB}. However, in the case of line bundles where the gauge group is abelian, the central extension splits into a direct product of $\mathbb{Z}$ with $\pi_1(X)$; the main remnant of this extension in the character is the degree $n$.
\end{remark}

\begin{remark}\label{rem:af-gauge-equiv} 
1)  
The description in the above theorem does not depend at all on the complex structure of the underlying Riemann surface and therefore serves as a very useful gauge for analyzing this problem.

\medskip 

2) A section $g$ of the line bundle over $X$ with fibers $Aut (E_x)$ 
 defines the isomorphism 
\begin{align}\label{g-iso-E} g E_\rho = E_{g_*\rho}, \ g_*\rho_{\gamma} := (\g^* \tilde g) \rho_{ \gamma}\tilde g^{-1},\end{align}
 where  $\tilde g$ is a lift of a section $g$ and we used the notation $\rho_\gamma(z)=\rho(\gamma, z)$. 
 This gives the `gauge' transformations of the automorphy factors.  Thus, changing a connection using gauge equivalence changes the bundle automorphy factor, so, in general, it is impossible to adjust both simultaneously.

\medskip
 
Consider a special case of \eqref{g-iso-E}:

If $g$  is such that its lifting $\tilde g$ to $\tilde X$  ($g \circ \pi =\tilde g$) satisfies 
 \[\tilde g (\g z) = \tilde g (z)\s'(\g)$, $\s' \in Hom(\pi_1(X), U(1)),\] then  $g_*\rho_{n, \s} = \rho_{n, \s'\s} $. 
In particular, if $g:X\ra U(1)$ (i.e. it is a section of a trivial bundle, so that $\tilde g$ satisfies $\tilde g(\g x)=\tilde g(x),\ \forall \g\in \pi_1(X)$), then $g_*\rho = \rho $. 

\medskip 
2) The gauge invariance implies that we can consider GLEs on a fixed bundle $E_{n, \s}, n\in\Z, \s\in Hom(\pi_1(X), U(1))$. 
 \end{remark}

\subsection{Automorphy Factors}\label{sec:af}

 In this section, we give the calculation proving Theorem \ref{thm:aut-factsEn}.
Recall $\mathbb{H} = \{ z \in \mathbb{C}\,|\, \Im(z) > 0\}$ with 
 the group $\mathbb{P}GL(2,\mathbb{R})$ acting on $\bH$ by $\left(\begin{smallmatrix}a&b\\c&d\end{smallmatrix}\right) \cdot z = \frac{az + b}{cz +d}$.  
\DETAILS{We have 
\begin{theorem}\label{thm:aut-factsEn}
For any Fuchsian group $\G$, any $n\in \Z$ and any  character $\s: \G   \rightarrow U(1)$, the map $\rho: \G \rightarrow \mathbb{C}^*$, given by 
 \begin{align}\label{b-automor}
\rho(s, z) = \s(s) 
 \biggr[\frac{c\overline{z} +d}{cz + d}\biggl] ^{-\frac{n }{2g-2} },\ \text{ for }\  & s=\left(\begin{smallmatrix}a&b\\c&d\end{smallmatrix}\right)  \in \G,\notag\\ &  s(z) =  \frac{az + b}{cz +d}.
\end{align} 
 is an automorphy factor; i.e., it satisfies the co-cycle condition \eqref{co-cycle}.  Its Chern number is $n$.
\end{theorem}}
\begin{proof}[Proof of Theorem \ref{thm:aut-factsEn}] Let $\beta=\frac{n }{2g-2}$ and $s=\left(\begin{smallmatrix}a&b\\c&d\end{smallmatrix}\right),\ t=\left(\begin{smallmatrix}e&f\\g&h\end{smallmatrix}\right)  \in \G$. Using \eqref{n-automor}, we compute 
\begin{align*}
\rho_n(s \cdot t, z) &= 
\biggr[\frac{(ce + dg)\overline{z} +(cf + dh)}{(ce +dg)z + (cf + dh)}\biggl] ^{-\beta} \\
&= 
 \biggr[\frac{c(e\overline{z} + f)  +d(g\overline{z} + h) }{(c(ez + f) + d(gz + h)}\biggl] ^{-\beta} \\
&= 
 \biggr[\frac{c\frac{e\overline{z} + f}{g\overline{z} + h} +d}{c\frac{ez + f}{gz + h} + d}\biggl]^{-\beta}  \biggr[\frac{g\overline{z} + h}{gz + h}\biggl]  ^{-\beta} \\
&= \rho_n(s, t \cdot z)\rho_n(t,z).
\end{align*}
Using the formula for  the Chern class, $c_1(\rho)$, of  a co-cycle $\rho$ (see \cite{Gun2}, Theorem 2a), we compute $c_1(\rho)=n$. 
\medskip
\end{proof}

\subsection{Uniform constant curvature connection and its holomorphic structure}  \label{sec:cc-con-exist}

In this section we prove Theorem \ref{thm:conn-const-curv}.
We begin with some preliminary constructions. 

 We consider the trivial bundle, $\tilde E:= \bH \times \mathbb{C}$ with the standard complex structure on $\bH$ associated to the  hyperbolic metric $\tilde h=  (\im z)^{-2} |dz|^2$. 

 Since we work here on a global product space, it is natural to take the fiber metric to be induced from the metric on the base. So we take the metric on the fiber $\mathbb{C}$ over the point $z \in \bH$ to be $k_z  = (\im z)^{-2} |dw|^2$ where $w$ is the coordinate on the fiber $\mathbb{C}_z$.

Let the connection $A$ be given by $A:=A_1 d x_1+A_2d x_2$. We decompose the covariant derivative  $\nabla_A$ into $(1, 0)$ and $(0, 1)$ parts as $\nabla_A=  \partial_A' +  {\partial_A''}$, where
  \begin{align} \label{pA}\partial_A' := \partial + i A_c,\  \partial_A'' :=\overline{\partial} + i \bar A_c.\end{align} 
  Here $\p := \frac{\partial}{\p z}\otimes d z$ and $\overline{\partial} := \overline{\frac{\partial}{\p z}}\otimes d\overline{z}$, where, as usual, $ \frac{\partial}{\p z}:=  \p_{z}:=(\partial_{x_1} - i \partial_{x_2})/2$ and $ \overline{ \frac{\partial}{\p z}} \equiv  \p_{\bar z}:=(\partial_{x_1} + i \partial_{x_2})/2$ and 
\begin{align} \label{compl-conn} A_c:=\frac12 (A_1 - iA_2) \otimes d z,\ \bar A_c:=\frac12 (A_1 + iA_2) \otimes d \bar z. \end{align}
We call the complex one-form $A^c$ the complexification of the real connection $A$.\footnote{The operators $\partial_A'$ and $\partial_A''$ are defined directly as follows. Let $J$ be the natural almost complex structure on $X$ (a linear endomorphism of $T^*\tilde{X}$ satisfying $J^2=-\one$), generated by $J(dx_1) = dx_2$ and $J(dx_2) = -dx_1$. Then $\Pi_\pm : T^*\tilde{X} \rightarrow T_{\tilde{X}}^{*1, 0}/T_{\tilde{X}}^{*0,1}$ via $v \mapsto (v \pm i J(v))/2$ defines the (scaled) projection onto antiholomorphic forms. We define $\partial_A' = \Pi_+(\nabla_A)$ and $\partial_A'' = \Pi_-(\nabla_A)$ which is computed explicitly to be as in \eqref{pA}.}
In the reverse direction, we have $A = 2\re A_c$. 

In terms of $A_c$, the curvature is given by   $F_A=2 \re \bar \p A_c$. Moreover, if $A_c$ satisfies the equivariance relation $s^* \bar A_c = \bar A_c  - i \overline{\partial}\tilde f_s$, then $A$ satisfies $s^* A = A  + d f_s$, with $f_s$ satisfying $d f_s:= 2 \im \overline{\partial}\tilde f_s$.

  According to \eqref{compl-conn}, the  complexification of the connection $A^n$ given in the theorem   is $A_c^n = \frac{n }{2g-2} \frac{ 1}{2\im(z)}d z$. 

\begin{proof}[Proof of Theorem \ref{thm:conn-const-curv}] 
  We omit the superindex $n$ in $A^n$ and $A_c^n$ and use the notation $b=\frac{n }{2g-2}$. %

\paragraph{Proof of constant curvature.} Using that $\omega = \frac{i}{2}\Im(z)^{-2} dz\wedge d\overline{z}$, we find
\begin{align*}
\bar \p  A_c &= \frac{\partial}{\partial \bar z} \frac{i b}{z - \overline{z}}d\bar z\wedge d z 
=  \frac{- i b}{(z - \overline{z})^2}dz\wedge d\overline{z} = \frac{i b}{4\Im(z)^2}dz\wedge d\overline{z} = \frac{ b}{2}\omega.
\end{align*}
Since $F_A= 2 \re \bar \p A_c$, this gives the desired result. 
\paragraph{Proof of uniqueness.}

If $A$ and $B$, satisfy $dB = dA$ then $d(A-B) = 0$. It follows from the simple connectedness of $\mathbb{H}$ 
 that $A-B = df$ for some function $f: \mathbb{H} \rightarrow \mathbb{R}$ and $f$ is unique up to an additive constant.  So we can map $B$ to $A$ through a suitable retrivialization.  
This completes the proof.

\paragraph{Proof of equivariance.}  For 
a generic isometry $s(z) =  \frac{\al z + \bet}{\g z +\del}$, we have $\frac{\partial s(z)}{\partial z}=(\g z +\del)^{-2}$ and $\Im(s(z))=\frac{\Im(z)}{ |\g z + \del|^2} $, which gives 
\begin{align*}
s^* \bar A_c &= \frac{ b}{2\Im(s(z))}\overline{\frac{\partial s(z)}{\partial z}}d\overline{z}= \frac{ k |\g z + \del|^2}{2\Im(z)} \frac{d\overline{z}}{(\g \overline{z} + \del)^2}  \\
&= \frac{ b}{2\Im(z)}\frac{\g z + \del}{\g \overline{z} + \del}d\overline{z}= \frac{ b}{2\Im(z)}d\overline{z} + \frac{ b (\g z - \g \overline{z})}{2\Im(z)(\g \overline{z} + \del)} d\overline{z} \\
& =\bar  A_c + \frac{i b \g}{\g \overline{z} +\del} d\overline{z}  =: \bar A_c  +  \overline{\partial}\tilde f_s,\end{align*} 
where $\tilde f_s$ is the function defined by the last relation, i.e. $ \overline{\partial}\tilde f_s=\frac{i b \g }{\g \overline{z} +\del}d\overline{z}.$
  Solving this equation, we find \[\tilde{f_s} = b\text{ln}(\g \overline{z} + \del)+c_s.\] 
  Now, we define $f_s := 2\re \tilde{f_s}$ and use that $\re (\bar\p\tilde{f_s}) =d (\re \tilde{f_s})$ (as can be checked by the direct computation: $\frac1b \re (\bar\p\tilde{f_s}) = - \frac{\g^2 x_2}{|\g z +\del|^2} dx_1 + \frac{\g (\g x_1 +\del)}{|\g z +\del|^2} d x_2$) to obtain $s^* A =  A  + d f_s$, with
\begin{align*}
& f_s = 2\Re(\tilde{f_s}) = i b \text{ln}\biggr[\frac{\g \overline{z} +\del}{\g z + \del}\biggl] +c_s,\\  \numberthis \label{gs} \
&\rho(s, z) = e^{if_s(z)} =e^{i c_s}  \biggr[\frac{\g \overline{z} +\del}{\g z + \del}\biggl] ^{-b}.
\end{align*}
Since $\mathbb{H}$ is the upper half plane, the complex logarithm is well defined and $\g z + d$ is always non zero. \end{proof}

\begin{remark} 1) The function $\tilde f_s (z)$ appearing above gives the character  $\tilde \rho (z) = e^{\tilde f_s (z)}$, which  is now $\mathbb{C}^*$ valued instead of $U(1)$ valued. 

2)
$A^n$ is $\mathbb{R}$ - linear while $A^c$ is naturally $\mathbb{C}$ - linear. It is natural to ask what the action of $i$ on $A^n_c$ does when we map back to $A^n$. A simple calculation shows that $iA^n_c = 
i \frac b2 \frac{ 1}{\im(z)}d z$ maps to $b y^{-1}dy$ which is flat! It turns out that the complex action of $i$ induces a rotation into the space of flat connections. 
\end{remark}

Since $(\partial_{A^n}'')^2 = 0$, the partial connection $\partial_{A^n}''$  gives a holomorphic structure on $E_n$, unique up to a complex gauge transformation, which in turn corresponds to the character \eqref{n-sig-automor} (see Remark \ref{aut-char}).

\begin{proposition}\cite{Gun4} 
For $b \in \mathbb{Z}$ the holomorphic sections of $H^0(X, E \otimes F)$ are modular forms of weight $-b$ for $\G$.  (See Appendix \ref{sec:bundleEF} for details.)
\end{proposition}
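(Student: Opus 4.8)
The plan is to pass to the universal cover $\mathbb{H}$, where all of the data are explicit, and there turn ``holomorphic section of $E\otimes F$'' into ``holomorphic function on $\mathbb{H}$ obeying the classical weight $-b$ automorphy law''. The ingredients are: the correspondence of Proposition \ref{prop:corresp} between sections of a line bundle and equivariant functions on $\mathbb{H}$; the explicit automorphy factor $\rho_{n,\sigma}(\gamma,z)=\sigma(\gamma)\bigl[\tfrac{c\bar z+d}{cz+d}\bigr]^{-b}$ of $E=E_{n,\sigma}$ from Theorem \ref{thm:aut-factsEn} (see \eqref{n-sig-automor}), with $b=\tfrac{n}{2g-2}$; and the explicit constant-curvature connection $A^n$ of Theorem \ref{thm:conn-const-curv}, whose $(0,1)$-part $\partial_{A^n}''$ defines the holomorphic structure on $E$. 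The auxiliary line bundle $F$ (constructed in Appendix \ref{sec:bundleEF}) is introduced precisely so that, after tensoring with it, the antiholomorphic factors carried by $\rho_{n,\sigma}$ and by the hyperbolic fibre metric cancel. Throughout one uses the hypothesis $b\in\mathbb{Z}$, which makes every exponent below an integer and every factor $[\,\cdot\,]^{b}$ single valued.

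First, by Proposition \ref{prop:corresp} a section of $E\otimes F$ is the same as a $\rho$-equivariant function $\Psi$ on $\mathbb{H}$, where $\rho$ is the product of $\rho_{n,\sigma}$ with the automorphy factor of $F$, and this section is holomorphic for the tensor holomorphic structure iff $\Psi$ lies in the kernel of $\partial_{A^n}''$ twisted by $F$. Second, I solve this first-order equation on $\mathbb{H}$. Using $A_c^n=\tfrac{b}{2\,\Im z}\,dz$ from Theorem \ref{thm:conn-const-curv} together with $\partial_{\bar z}\log(\Im z)=\tfrac{i}{2\,\Im z}$, one sees that the $\partial_{A^n}''$-holomorphic functions are exactly those of the form $(\Im z)^{-b}h$ with $h$ honestly holomorphic; incorporating the further explicit, nowhere-vanishing integrating factor contributed by $F$, the holomorphic sections of $E\otimes F$ are in bijection with holomorphic functions $h$ on $\mathbb{H}$ for which $\mu h$ is $\rho$-equivariant, $\mu$ being an explicit nowhere-vanishing function.

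Third, I extract the transformation law of $h$ from that of $\Psi=\mu h$: equivariance gives $h(\gamma z)=\mu(z)\,\mu(\gamma z)^{-1}\,\rho(\gamma,z)\,h(z)$ for $\gamma\in\Gamma$. Using $\Im(\gamma z)=\Im z/|cz+d|^2$ (with $c,d$ the bottom row of $\gamma$) and inserting the explicit $\mu$ and $\rho_{n,\sigma}$, the prefactor $\mu(z)\mu(\gamma z)^{-1}\rho(\gamma,z)$ — which a priori carries powers of $\Im z$ and of $c\bar z+d$ — collapses, once $F$ and $\mu$ are chosen correctly, to $v(\gamma)\,(cz+d)^{-b}$ with $v:\Gamma\to U(1)$ a multiplier independent of $z$ and built from $\sigma$ and the character of $F$. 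Hence $h(\gamma z)=v(\gamma)\,(cz+d)^{-b}h(z)$, which is precisely the defining identity of a modular form of weight $-b$ for $\Gamma$ (with multiplier system $v$); running the argument in reverse shows every such form comes from a holomorphic section. Finally, since $X=\Gamma\backslash\mathbb{H}$ is compact, $\Gamma$ has no parabolic elements and $X$ has no cusps, so the customary holomorphy-at-the-cusps/moderate-growth requirement in the definition of modular forms is vacuous; every holomorphic $h$ with the above automorphy is a modular form, giving the identification $H^0(X,E\otimes F)\cong\{\text{weight }-b\text{ modular forms for }\Gamma\}$. As a consistency check one matches degrees: $\deg(E\otimes F)$ must equal the degree of the line bundle of weight $-b$ forms, which fixes $\deg F$.

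The crux is the collapse in the third step. Both the non-standard fibre metric $k_z=(\Im z)^{-2}|dw|^2$ and the non-holomorphic automorphy factor $\rho_{n,\sigma}$ contribute antiholomorphic factors (powers of $\Im z$ and of $c\bar z+d$), and the substance of the proposition is that, after the integrating factor and the twist by $F$, exactly those antiholomorphic pieces cancel and one is left with only the holomorphic factor $(cz+d)^{-b}$. Pinning down $F$ and the multiplier $v$ so that the surviving weight is exactly $-b$ — in the weight convention used here — rather than some other multiple of $b$, is the delicate bookkeeping; it is carried out in Appendix \ref{sec:bundleEF} and rests on Gunning's classification of automorphy factors on compact Riemann surfaces \cite{Gun4}.
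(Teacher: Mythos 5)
Your overall strategy --- uniformize, represent sections of $E\otimes F$ by equivariant functions on $\bH$, and read off the resulting automorphy factor --- is exactly the paper's (Proposition \ref{prop:EF-K}), but the exponent you land on is wrong, and this is not a harmless difference of convention. The automorphy factor of $E$ is $\rho_n(\g,z)=\bigl[\tfrac{c\bar z+d}{cz+d}\bigr]^{-b}$ and that of $F$ is $\tilde{\rho}(\g,z)^{-1}=|cz+d|^{2b}$ (see \eqref{n-automor} and \eqref{tilde-automor}), so their product is
\[
\Bigl[\frac{c\bar z+d}{cz+d}\Bigr]^{-b}\,|cz+d|^{2b}=(cz+d)^{2b},
\]
and a holomorphic section of $E\otimes F$ is a holomorphic $h$ on $\bH$ with $h(\g z)=(cz+d)^{2b}h(z)=\g'(z)^{-b}h(z)$, i.e.\ an invariant $b$-fold differential $h\,dz^{b}$; that transformation law is what ``weight $-b$'' means here (Gunning's convention), and it identifies $E\otimes F\cong K^{\otimes b}$. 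Your claimed collapse to $v(\g)(cz+d)^{-b}$ describes a different bundle, namely $K^{\otimes(-b/2)}$ of degree $-n/2$; for $1\le n$ this degree is negative, so it would force $H^0(X,E\otimes F)=0$, contradicting both $\deg(E\otimes F)=n$ (Proposition \ref{prop:EF-K}) and the existence of the holomorphic section $\phi$ on which the whole bifurcation argument rests.

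The slip originates in your second step and propagates: the integrating factor for $\partial_{A^n}''$ is $\tilde g=y^{+b}$, not $(\Im z)^{-b}$ (Proposition \ref{prop:DeltaA-repr}), and with the signs fixed the prefactor $\mu(z)\mu(\g z)^{-1}\rho(\g,z)$ collapses to $(cz+d)^{2b}$, not $(cz+d)^{-b}$. The surrounding scaffolding of your argument is fine (the reduction to equivariant functions via Proposition \ref{prop:corresp}, and the observation that compactness of $X$ makes any cusp condition vacuous), but the one computation that constitutes the substance of the proposition --- the cancellation of the antiholomorphic factors and the identification of the surviving holomorphic factor --- is exactly the step you defer to ``delicate bookkeeping,'' and when carried out it gives a different answer than the one you assert.
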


\begin{remark}
 For non-integral $b$, the sections of the proposition lift to modular forms of weight $-\frac{n}{\gcd(n, 2g-2)}$ on a $\frac{2g-2}{\gcd(n, 2g-2)}$ cover of $X$  (\cite{FarKra} III.9.13).
\end{remark}

\section{Variation of the hermitian metric} 

 Given a complex structure, the Riemannian metric can be specified in local complex coordinate $z$ by a positive smooth function $\lambda(z)$ as  $ds^2 = \lambda(z) |dz^2|$ and the corresponding volume form by $\omega = \lambda(z) dz \wedge d\bar{z}$. Solutions to \eqref{GLE} depend parametrically on $h$ only through the positive, smooth function $\lambda(z)$. In this appendix we first consider how our variational equations change if 
$\lambda(z)$ is simply rescaled. Then we describe what happens under more general deformaitons. 

\subsection{Rescaled equations} \label{sec:rescGLE} 
Here we show that the Ginzburg-Landau equations, \eqref{GLE}, with 
the hermitian metric $r h$ are equivalent to  \eqref{GLE'}, with $\mu=r \kappa^2$.  We distinguish the quantities related to the  family of Hermitian  metrics $d s= r \lam (z) |dz|$ by tildes. It suffices to show that
\begin{align}\label{scaling}
\tilde{\Delta}_A  
= \frac{1}{r} \Delta_A,\  
\tilde{M}  =  \frac{1}{r} M, 
\end{align}
where, recall, $M:=d^*d $. 
We note that $-\Delta_A= - *\overline{(- d + iA)}*(d + iA)$, 
where $*$ is  the Hodge star operator and that the scaling of the metric arises only in the application of the Hodge star.
To prove the first relation, we use that, since $g_{ij}$ is conformally flat and therefore $dx_1$ and $dx_2$ are orthogonal, we have that 
\begin{equation}\label{star}
*dx_1  = dx_2,\  *dx_2 = -dx_1,\ *1 = \lambda dx_1 \wedge dx_2,\ *dx_1 \wedge dx_2 = 1/\lambda. \end{equation}
Indeed, $*dx_1= \frac{||dx_1||}{||dx_2||} dx_2    = \frac{\sqrt{g^{11}}}{\sqrt{g^{22}}}dx_2 = \frac{\sqrt{1/\lambda}}{\sqrt{1/\lambda}}dx_2 = dx_2$ and similarly for the other relations. This gives $\tilde{\Delta}_A  = \tilde * \overline{(d + iA)}\tilde *(d + iA) =  \frac{1}{r}\ \overline{(d + iA)} *(d+ iA)= \frac{1}{r} \Delta_A.$ We note that taking $\tilde{\psi} = \sqrt{r}\psi$ produces the rescaled equations, which after omitting the tilde read \eqref{GLE'} with $ \mu:=\kappa^2 r$ (cf. \cite{Bradlow}).

\subsection{Gauged Deformations of the metric} \label{sec:bundleEF}
We fix a base volume form $\lambda_0(z) dz \wedge d\bar{z}$ which determines a fixed base * operator. Then any other * operator gotten from the base volume form, or equivalently the base * operator, by multiplying with the factor $\lambda(z)/\lambda_0(z)$. To ensure the non-vanishing of the volume form we set $\lambda = e^{h(z)}$ and vary $\lambda(z)$ by varying $h$. Now we make some convenient definitions:
\beann
\lambda_0(z) &=& e^{h_0(z)}\\
\lambda(z)/\lambda_0(z) &=& e^{h(z) - h_0(z)}\\
g(z) &=& e^{-(h(z) - h_0(z))}
\eeann 
Finally we let $\widehat{*}$ denote the * operator associated to $\lambda_0$ while $*$ denotes the * operator associated to $\lambda$. Now note how the three terms in the energy \eqref{GLen} scale with $\lambda(z)$. The first term is independent, the second term scales as $\lambda^{-1}(z)$ and the third term scales as $\lambda(z)$. Now, integrating by parts to get the variational equations a piece of the first term combines with the third term to get the first variational equation in \eqref{GLE} while the remaining piece of the first term combines with the second term to yield the second variational. It is this mixing of $\lambda$ scalings between terms that determines how varying the metric affects the variationa equations. Indeed, the following calculations are straightforward. 
\beann
d^{\widehat{*}} d a &=& g\left( (d + g^{-1}dg)^* d a\right) = \Im(\overline{\psi}\nabla_a \psi)\\
-\widehat{\Delta}_a\psi &=& - g \Delta_a\psi = (\kappa^2|\psi|^2 - 1) \psi.
\eeann
Now if one makes the gauged scaling $\psi \to \sqrt{g}\psi$ one sees that varying $h_0$ by $h - h_0$
amounts to transforming the original variational equations to
\beann
(d + g^{-1}dg)^* d a &=& \Im(\overline{\psi}\nabla_{a + \frac12 g^{-1}dg} \psi)\\
-\Delta_{a + \frac12 g^{-1}dg}\psi &=& (\kappa^2|\psi|^2 - g^{-1}) \psi.
\eeann
We note that this is effectively a gauge transformation; however, one which is real-valued and non-unitary. As a consequence the unit 1 in the nonlinear term of the final equations must replaced by $g^{-1}$. This is completely analogous to, and indeed is an extension of, the constant rescaling carried out in Appendix  \ref{sec:rescGLE}. We may view this gauging as a smoothly varying pointwise dilation which is completely consistent with the geometric significance of $\lambda$ in our description of surface metrics below.

\section{Bundle Holomorphisation} \label{sec:bundle-holomorphiz}
In this section we prove Theorem \ref{thm:bundle-holomorphiz}. 
Let $E$ be a $U(1)$  bundle over $\mathbb{H}$ with an automorphy factor 
$\rho$  and a unitary, $\rho-$equivariant connection $\nabla_A = \nabla + iA$. 

The following result is a consequence of the Dolbeault lemma (see \cite{Forster}, Theorem 13.2): 

\begin{proposition} 
\label{prop:dolbeault}There exists a complex valued  gauge transformation $g$ that solves $g^{-1} \partial_A'' g = \overline{\partial}$ and it is unique up to a holomorphic term. In particular, $g$ satisfies 
\begin{equation}
\overline{\partial} g = i g \overline{A_c}. \label{dolbeault}
\end{equation}
 $g$ inherits an equivariance property from $A$.
\end{proposition}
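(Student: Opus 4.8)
The plan is to recognize the condition $g^{-1}\partial_A'' g = \overline{\partial}$ as a first-order $\overline{\partial}$-equation for $g$ and to solve it by the Dolbeault lemma. Using the definition $\partial_A'' = \overline{\partial} + i\overline{A_c}$ from \eqref{pA} and letting $g$ act on sections by multiplication, a direct computation shows that $g^{-1}\partial_A'' g = \overline{\partial}$ holds precisely when $g$ satisfies the first-order equation \eqref{dolbeault}, i.e. $\overline{\partial} g = i g \overline{A_c}$. Since a gauge transformation must be invertible, I look for a $\C^*$-valued $g$ and write $g = e^{u}$; then $\overline{\partial} g = g\,\overline{\partial} u$, so \eqref{dolbeault} reduces to the \emph{linear} inhomogeneous equation $\overline{\partial} u = i\overline{A_c}$, whose right-hand side is a smooth $(0,1)$-form on $\bH$ (there is no integrability obstruction here, as $\bH$ carries no $(0,2)$-forms).

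The existence of $g$ then follows from the Dolbeault lemma (\cite{Forster}, Theorem~13.2): on $\bH$ the operator $\overline{\partial}$ from smooth functions to smooth $(0,1)$-forms is surjective, so there is a smooth $u$ solving $\overline{\partial} u = i\overline{A_c}$, and I set $g := e^{u}$. For uniqueness, if $g_1$ and $g_2$ both satisfy \eqref{dolbeault}, then a one-line computation from \eqref{dolbeault} gives $\overline{\partial}(g_1 g_2^{-1}) = 0$, so the two solutions differ by a nowhere-vanishing holomorphic function, which is the asserted ``holomorphic term''.

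Finally, for the equivariance: the connection $A$ is $\rho$-equivariant, and in complexified form this reads $\gamma^*\overline{A_c} = \overline{A_c} - i\overline{\partial}\tilde f_\gamma$ for $\gamma \in \G$, with $\tilde f_\gamma$ the $\C^*$-valued ``logarithm'' of the automorphy factor as in Section~\ref{sec:cc-con-exist}. Pulling back \eqref{dolbeault} by $\gamma$ and substituting this relation, one finds that $\gamma^* g$ solves the $\overline{\partial}$-equation attached to the $\gamma$-transform of $\overline{A_c}$; comparing with the equation for $g$, the ratio $(\gamma^* g)\,g^{-1}$ is seen to be a holomorphic function times $e^{\tilde f_\gamma}$. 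Thus $g$ intertwines the $\Gamma$-action up to the change of automorphy factor, i.e. it conjugates $\rho$ to a holomorphic automorphy factor $g_*\rho$ in the sense of \eqref{g-iso-E}, exhibiting $g E_\rho$ as a holomorphic line bundle on which $\partial_A''$ has become $\overline{\partial}$; this is the equivariance that $g$ inherits from $A$.

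The only genuinely non-formal input in the argument is the global solvability of $\overline{\partial} u = i\overline{A_c}$ on the non-compact surface $\bH$, which is exactly the Dolbeault lemma invoked above; everything else is routine bookkeeping with \eqref{pA} and \eqref{compl-conn}. The step requiring the most care is the equivariance computation, where the pullback conventions for $(0,1)$-forms on $\bH$ must be tracked exactly as in the explicit equivariance calculation already performed for $A^n$ in the proof of Theorem~\ref{thm:conn-const-curv}.
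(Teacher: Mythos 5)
Your proposal is correct and follows essentially the same route as the paper: write $g=e^{u}$, reduce \eqref{dolbeault} to the linear equation $\overline{\partial}u = i\overline{A_c}$, and solve it by the Dolbeault lemma (\cite{Forster}, Theorem 13.2). You additionally spell out the uniqueness-up-to-a-holomorphic-factor and the equivariance of $g$, which the paper asserts without detail; these additions are consistent with the paper's intent.
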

\begin{proof}
Define $g = e^\kappa$, then $\overline{\partial}\kappa = i\overline{A_c}$. 
 Passing from forms to functions and using the Dolbeault lemma (see \cite{Forster}, Theorem 13.2), we arrive at the statement of the propositions. 
\end{proof}

\begin{proposition}\label{prop:rho'}Viewing $g$ as a multiplicative map from $\tilde E$ to a new bundle $\tilde E'$, the induced automorphy factors 
 $\rho' (\gamma):=\gamma^*(g^{-1})\rho(\gamma) g$ on the new bundle are holomorphic.
\end{proposition}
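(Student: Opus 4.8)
The plan is to verify directly that the transition functions $\rho'(\gamma) := \gamma^*(g^{-1})\,\rho(\gamma)\, g$ are holomorphic, i.e.\ that $\overline{\partial}\rho'(\gamma) = 0$ for every $\gamma \in \G$, and to check en route that they actually satisfy the co-cycle relation \eqref{co-cycle} so that they do define an automorphy factor on the new bundle $\tilde E'$. The starting point is the defining relation \eqref{dolbeault} of $g$ from Proposition \ref{prop:dolbeault}, namely $\overline{\partial} g = i g \overline{A_c}$, together with the equivariance of the connection, which in complexified form reads $\gamma^*\bar A_c = \bar A_c - i\,\overline{\partial}\tilde f_\gamma$ where $\rho(\gamma) = e^{i f_\gamma}$ (this is exactly the equivariance identity recorded just before \eqref{pA} and used in Section \ref{sec:cc-con-exist}); equivalently $\overline{\partial}\rho(\gamma) = i\rho(\gamma)(\gamma^*\bar A_c - \bar A_c)$, which is the $(0,1)$-part of \eqref{gauge-per-Gam}.

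First I would compute $\overline{\partial}$ of the factor $\gamma^*(g^{-1})$. From $\overline{\partial} g = ig\overline{A_c}$ we get $\overline{\partial}(g^{-1}) = -g^{-1}(\overline{\partial} g)g^{-1} = -i\overline{A_c}\,g^{-1}$, and pulling back by $\gamma$ (which commutes with $\overline{\partial}$ up to the chain rule, i.e.\ $\gamma^*\circ\overline{\partial} = \overline{\partial}\circ\gamma^*$ on functions, since $\gamma$ is holomorphic) yields $\overline{\partial}\big(\gamma^*(g^{-1})\big) = -i\,(\gamma^*\overline{A_c})\,\gamma^*(g^{-1})$. Then I would differentiate the product $\rho'(\gamma) = \gamma^*(g^{-1})\,\rho(\gamma)\, g$ by the Leibniz rule, substituting the three pieces: $\overline{\partial}\big(\gamma^*(g^{-1})\big) = -i(\gamma^*\bar A_c)\gamma^*(g^{-1})$, $\overline{\partial}\rho(\gamma) = i\rho(\gamma)(\gamma^*\bar A_c - \bar A_c)$, and $\overline{\partial} g = ig\bar A_c$. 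Since all these are scalar ($U(1)$, resp.\ $\mathbb{C}^*$) quantities they commute, so the three contributions become $-i(\gamma^*\bar A_c)\rho'(\gamma) + i(\gamma^*\bar A_c - \bar A_c)\rho'(\gamma) + i\bar A_c\,\rho'(\gamma)$, and these cancel termwise, giving $\overline{\partial}\rho'(\gamma) = 0$. Hence each $\rho'(\gamma)$ is holomorphic.

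To finish I would check the co-cycle relation: writing out $\rho'(\gamma\gamma')$ and using the co-cycle property \eqref{co-cycle} of the original $\rho$ together with $\gamma^*\gamma'^* = (\gamma'\gamma)^*$ (or $(\gamma\gamma')^*$ depending on the composition convention fixed in \eqref{co-cycle}), the middle factors of $g$ and $\gamma^*(g^{-1})$ telescope and one recovers $\rho'(\gamma\gamma') = \gamma'^*(\rho'(\gamma))\,\rho'(\gamma')$ in the appropriate form. This is the same formal manipulation as in \eqref{g-iso-E}, so it is essentially bookkeeping. The only genuine subtlety — and the step I would be most careful about — is the interplay between pullback and $\overline{\partial}$: one must use that $\gamma$ acts holomorphically on $\bH$ so that $\gamma^*$ preserves the $(0,1)$-part and commutes with $\overline{\partial}$, and one must keep consistent track of whether $\gamma^*\bar A_c$ means the pullback of the complexified connection form; once the equivariance relation for $\bar A_c$ is written in the right complexified normalization (as in Section \ref{sec:cc-con-exist}), the cancellation is forced. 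Everything else is routine scalar calculus, aided by the commutativity of the $\mathbb{C}^*$-valued factors.
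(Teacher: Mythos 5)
Your proposal is correct and follows essentially the same route as the paper: expand $\overline{\partial}\rho'(\gamma)$ by the Leibniz rule, feed in $\overline{\partial}g = ig\overline{A_c}$ from \eqref{dolbeault} and the $(0,1)$-part of the equivariance relation \eqref{gauge-per-Gam}, and observe the termwise cancellation. The only cosmetic difference is that you obtain $\overline{\partial}\bigl(\gamma^*(g^{-1})\bigr)$ by commuting the pullback with $\overline{\partial}$ (legitimate since $\gamma$ is holomorphic), where the paper does the same step by an explicit chain-rule computation; your additional check of the co-cycle identity for $\rho'$ is harmless bookkeeping the paper leaves implicit.
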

\begin{proof} We start with two key components about how holomorphic functions compose with regular complex valued functions. If $w: U \rightarrow V$ is holomorphic in a neighbourhood, $x : \mathbb{R} \rightarrow U$ and $g : \mathbb{C} \rightarrow V$ are smooth then
\begin{align*}
\frac{d w \circ x}{dt} &= \frac{dw}{dz}(x(t))\frac{dx}{dt} \\
\overline{\partial}(g \circ w) &= (\overline{\partial} g) (w(z))\overline{\frac{dw}{dz}}
\end{align*}
which can be checked by regarding maps on $\mathbb{C}$ as maps on $\mathbb{R}^2$. With these two formulas we can perform the computation:
\begin{align}
\overline{\partial} &(\gamma^*(g^{-1})\rho(\gamma) g) \\
=  &\overline{\partial} (\gamma^*(g^{-1}))\rho(\gamma) g + \gamma^*(g^{-1}) \overline{\partial}(\rho(\gamma)) g + \gamma^*(g^{-1})\rho(\gamma)\overline{\partial}g \\
= & i \gamma^*(g^{-1}) \rho(\gamma) g (\overline{A_c} + \overline{\partial}f_\gamma) + \overline{\partial} (1/z \circ g \circ \gamma) \rho(\gamma) f \\
=  & i \gamma^*(g^{-1}) \rho(\gamma) g (\overline{A_c} + \overline{\partial}f_\gamma) \\
 & -1/z^2 \circ g \circ \gamma \cdot \overline{\partial}g \circ \gamma \cdot \overline{\partial \gamma} \\
= & i \gamma^*(g^{-1}) \rho(\gamma) g (\overline{A_c} + \overline{\partial}f_\gamma) -  i \gamma^*(g^{-1}) \rho(\gamma) g \gamma^* \overline{A_c} \\
= & i \gamma^*(g^{-1}) \rho(\gamma) g (\overline{A_c} + \overline{\partial}f_\gamma -  \gamma^* \overline{A_c}) \\
= & 0.
\end{align}

The first equality is the Leibniz rule, the second equality follows from ($\ref{dolbeault}$), the third equality is an application of the two formulas and the last equality follows from the equivariance condition on $A$. 
\end{proof}
Now we derive Theorem \ref{thm:bundle-holomorphiz} from these propositions. Let $\tilde E$ be the uniformization of the unitary bundle $E$ in Theorem \ref{thm:bundle-holomorphiz}  and let $\n_A$ be the lift of the connection, $\n_a$, on that bundle. Hence $E=\tilde E/\rho$ for some automorphy factor $\rho$. Furthermore, let $\tilde E', \rho'$ and $\n_{A'}$ be the bundle, the automprphy factor and the connection constructed in Propositions \ref{prop:dolbeault} and \ref{prop:rho'} and let $E'=\tilde E'/\rho'$. 

Since the function $g$ in Proposition \ref{prop:dolbeault} is equivariant, it descends to $X$ as a section of a line bundle over $X$, which induces the map of $E$ into $E'$. This map takes also  
 the connection $\n_a$ on $E$ into the connection $\n_{a'}$ on $E'$. By Propositions \ref{prop:dolbeault} and \ref{prop:rho'}, $\n_{a'}$ comes from $d$ on $\tilde E'$ via a holomorphic projection. Hence the connection $\n_{a'}$ is holomorphic.  
This proves 
Theorem \ref{thm:bundle-holomorphiz}. $\Box$

\bigskip

For connection \eqref{An} (with the standard metric on $\bH$), we have an explicit form the transformation $g$, which we now denote $\tilde g$. Namely, we have

\begin{proposition}\label{prop:DeltaA-repr}
Let $b := \frac{2\pi n }{|X|}= \frac{n }{2g-2}$. Then  $  \tilde g = y^b$ solves $\tilde g^{-1}\partial_{A^n}''   \tilde g = \overline{\partial}$. $ \tilde g$ transforms under $s=\left(\begin{smallmatrix}a&b\\c&d\end{smallmatrix}\right)  \in PSL(2,\mathbb{R})$ as $s^*  \tilde g =  \tilde g \tilde{\rho},$ where
\begin{equation}\label{tilde-automor}
\tilde{\rho}: PSL(2,\mathbb{R})\times \bH \ra \R,\  \tilde{\rho}(s,z):= |cz+d|^{-2 b}. \end{equation} \end{proposition}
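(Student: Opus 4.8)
\textbf{Proof proposal for Proposition \ref{prop:DeltaA-repr}.}

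The plan is to verify the two asserted facts by direct computation, exploiting the very explicit form of the connection $A^n$ and of the fiber metric on $\tilde E = \bH\times\C$. First I would recall from \eqref{pA} and \eqref{compl-conn} that $\partial_{A^n}'' = \overline{\partial} + i\overline{A^n_c}$ with $A^n_c = b\,\frac{1}{2\Im(z)}\,dz$, so that $\overline{A^n_c} = b\,\frac{1}{2\Im(z)}\,d\bar z = \frac{b}{2y}\,d\bar z$ where $y = \Im(z)$. Then, following Proposition \ref{prop:dolbeault}, the equation $\tilde g^{-1}\partial_{A^n}'' \tilde g = \overline{\partial}$ is equivalent to $\overline{\partial}\tilde g = i\,\tilde g\,\overline{A^n_c}$, i.e. to $\partial_{\bar z}\tilde g = \tfrac{i b}{2y}\,\tilde g$. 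With the ansatz $\tilde g = y^b$ and $y = \frac{z-\bar z}{2i}$, so that $\partial_{\bar z} y = -\tfrac{1}{2i} = \tfrac{i}{2}$, I compute $\partial_{\bar z}(y^b) = b\,y^{b-1}\cdot\tfrac{i}{2} = \tfrac{ib}{2y}\,y^b$, which is exactly the required identity. Since $y>0$ on $\bH$ the power $y^b$ is well-defined and smooth for any real $b$; this disposes of the first claim.

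For the transformation law, I would use the standard identity $\Im(s\cdot z) = \dfrac{\Im(z)}{|cz+d|^2}$ for $s = \left(\begin{smallmatrix}a&b\\c&d\end{smallmatrix}\right)\in PSL(2,\R)$ acting by $s\cdot z = \frac{az+b}{cz+d}$ — this is already invoked in the proof of Theorem \ref{thm:conn-const-curv}. By definition the pullback $s^*\tilde g$ is $\tilde g\circ s$, i.e. $(s^*\tilde g)(z) = \big(\Im(s\cdot z)\big)^b = \Big(\dfrac{\Im(z)}{|cz+d|^2}\Big)^b = y^b\,|cz+d|^{-2b} = \tilde g(z)\,|cz+d|^{-2b}$. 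Hence $s^*\tilde g = \tilde g\,\tilde\rho$ with $\tilde\rho(s,z) = |cz+d|^{-2b}$, which is the claimed formula \eqref{tilde-automor}. One should note in passing that $\tilde\rho$ takes values in $\R_{>0}$ (consistent with the Remark after Proposition \ref{prop:rho'} that the complex gauge transformation produces a $\C^*$-valued, rather than $U(1)$-valued, automorphy factor), and that it does indeed satisfy the cocycle relation \eqref{co-cycle} because it arises as the ``coboundary'' of $\tilde g$; this can be checked directly from $|c'(s\cdot z)+d'|\cdot|cz+d| = |c''z+d''|$ for the product $s'' = s'\cdot s$, exactly as in the computation in the proof of Theorem \ref{thm:aut-factsEn}, but it is not strictly needed for the statement.

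There is essentially no hard step here: the proposition is a concrete instance of the abstract Proposition \ref{prop:dolbeault} (existence and uniqueness up to a holomorphic factor of a solution to $g^{-1}\partial_A'' g = \overline{\partial}$), and the only thing to be careful about is bookkeeping with the factor of $2$ in the identification $y = (z-\bar z)/(2i)$ and with the convention $\overline{\partial} := \partial_{\bar z}\otimes d\bar z$, so that the one-form equation $\overline{\partial}\tilde g = i\tilde g\,\overline{A^n_c}$ reduces correctly to the scalar equation $\partial_{\bar z}\tilde g = \tfrac{ib}{2y}\tilde g$. If one wanted to be thorough about uniqueness one would add that any other solution differs from $y^b$ by multiplication by a holomorphic function, by Proposition \ref{prop:dolbeault}; but the statement only asserts that $y^b$ is \emph{a} solution, so this is optional.
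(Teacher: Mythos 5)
Your proposal is correct and follows essentially the same route as the paper: both reduce the first claim via Proposition \ref{prop:dolbeault} to the scalar equation $\overline{\partial}\tilde g = i\tilde g\,\overline{A^n_c}$ (the paper writes $\tilde g = e^{\al}$ and solves $\overline{\partial}\al = i\bar A_c$ to land on $y^b$, while you verify the ansatz $y^b$ directly — a cosmetic difference), and both obtain the automorphy factor from the identity $\Im(s\cdot z) = \Im(z)/|cz+d|^2$. No gaps.
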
 
\begin{proof} We omit the superindex $n$, let $\tilde g = e^{\al}$ and solve: 
\begin{align*}
& e^{-\al}\partial_{A}''e^{\al} = \overline{\partial} \Longleftrightarrow
 e^{-\al}(\overline{\partial} + i \bar A_c)e^{\al} = \overline{\partial}  \Longleftrightarrow \overline{\partial}\al =  i \bar A_c
\end{align*}
 Since $A_c= \frac{n }{2g-2} \frac{ 1}{2\im(z)}d z$, it then follows that 
\begin{align*} 
\al= b \text{ln}\biggl(\frac{z - \overline{z}}{2i}\biggr) 
\end{align*}
solves $\overline{\partial}\al =  i \bar A_c$, which gives that $\tilde g = y^b$. We compute the automorphy of $\tilde g$ to find \eqref{tilde-automor}.
\end{proof} 

\bigskip
 Let  $F$ be the line bundle over $X=\mathbb{H}/\G$, defined by the automorphy map $\tilde{\rho}^{-1}$, where $\tilde{\rho}$ is given in \eqref{tilde-automor},  and, recall, that $E$ is the line bundle over $X$, with the automorphy map $\rho$. Then $\tilde g^{-1}$ decends to a section, $g$, of  $F$.  It follows that 
  $E\otimes F$ is a holomorphic line bundle and the equation $\partial_{a^n}''\phi = 0$ is equivalent to $g^{-1}\phi$ being a holomorphic section of $E\otimes F$. 
Hence, we have 

 \begin{corollary}\label{cor:NullDeltaA-NullPart} Let $b := \frac{2\pi n }{|X|}= \frac{n }{2g-2}$ and, as above, $s$ be the section of the line bundle $F$ coming from the equivariant function $\tilde g = y^b$ on $\bH$. We have 
   \begin{align}\label{NullDeltaA-NullPart} 
   \Null \partial_{a^n}''  = s H^0(X, E\otimes F), 
  \end{align} 
 where, as usual, $H^0(X, L)$ is the space of holomorphic sections on  a bundle $L$. \end{corollary}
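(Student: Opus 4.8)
The plan is to prove Corollary \ref{cor:NullDeltaA-NullPart} by combining Proposition \ref{prop:DeltaA-repr} with Corollary \ref{cor:NullDeltaA-NullPart}'s setup: the bundle $F$ defined by the automorphy map $\tilde\rho^{-1}$, the section $s$ of $F$ descending from $\tilde g = y^b$, and the holomorphisation of $E\otimes F$. The statement \eqref{NullDeltaA-NullPart} is an identification of two finite-dimensional vector spaces, so I would establish it by exhibiting mutually inverse linear maps (in fact, multiplication by $s$ and by $s^{-1}$), checking that they are well-defined on the relevant spaces, i.e. that they respect both the equivariance/descent conditions and the holomorphicity/null-space conditions.

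\medskip

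First I would work upstairs on $\bH$ with the trivial bundle $\tilde E$. By Proposition \ref{prop:DeltaA-repr}, $\tilde g = y^b$ satisfies $\tilde g^{-1}\partial_{A^n}''\tilde g = \overline\partial$, equivalently $\partial_{A^n}''(\tilde g \, h) = \tilde g\,\overline\partial h$ for any smooth function $h$ on $\bH$. Hence $\phi \in \Null\partial_{A^n}''$ (as an equivariant function on $\bH$) if and only if $\tilde g^{-1}\phi$ is holomorphic on $\bH$. Next I would track the equivariance: $\phi$ is $\rho$-equivariant by hypothesis and $\tilde g$ is $\tilde\rho$-equivariant by Proposition \ref{prop:DeltaA-repr}, so $\tilde g^{-1}\phi$ transforms under $\G$ with the automorphy factor $\tilde\rho^{-1}\rho$, which is exactly the automorphy factor of $E\otimes F$ by Remark \ref{rem:gauge-equiv-sect} and the definition of $F$. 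Therefore $\tilde g^{-1}\phi$ descends to a holomorphic section of $E\otimes F$ over $X$, i.e. an element of $H^0(X, E\otimes F)$; this shows multiplication by $s^{-1}$ maps $\Null\partial_{a^n}''$ into $H^0(X,E\otimes F)$.

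\medskip

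Then I would run the argument in reverse: given a holomorphic section $\sigma \in H^0(X, E\otimes F)$, lift it to an equivariant holomorphic function on $\bH$, multiply by $\tilde g = y^b$, and observe that the resulting function is $\rho$-equivariant (the $\tilde\rho$ from $\tilde g$ cancels the $\tilde\rho^{-1}$ half of the automorphy factor of $E\otimes F$) and lies in $\Null\partial_{A^n}''$ by the identity $\partial_{A^n}''(\tilde g\, h)=\tilde g\,\overline\partial h$ with $h$ holomorphic. Descending, this gives an element of $\Null\partial_{a^n}''$, namely $s\sigma$. Since multiplication by $s$ and by $s^{-1}$ are visibly inverse to each other and both linear, \eqref{NullDeltaA-NullPart} follows.

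\medskip

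The main obstacle I anticipate is purely bookkeeping rather than conceptual: making sure the equivariance factors compose correctly — that $\tilde\rho^{-1}\rho$ is genuinely the automorphy factor attached to $E\otimes F$ and that $s$ (resp.\ $s^{-1}$), which a priori is only a \emph{meromorphic} or nowhere-vanishing section, really does give a well-defined map on global sections without introducing spurious zeros or poles. Here one uses that $\tilde g = y^b$ is smooth and nowhere vanishing on $\bH$, so $s$ is a nowhere-vanishing smooth section of $F$ and multiplication by $s$ is an isomorphism of the underlying smooth bundles $E\otimes F^{-1}\cdots$; combined with the fact established above that it intertwines $\overline\partial$ with $\partial_{a^n}''$, this upgrades it to the desired isomorphism of the null space with $H^0(X,E\otimes F)$. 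No Riemann–Roch or deformation input is needed — the corollary is entirely a consequence of the explicit gauge transformation of Proposition \ref{prop:DeltaA-repr} together with the bundle-holomorphisation picture of Appendix \ref{sec:bundle-holomorphiz}.
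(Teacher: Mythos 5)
Your proof is correct and takes essentially the same route as the paper: there the corollary is deduced directly from Proposition \ref{prop:DeltaA-repr}, by observing that $\tilde g^{\pm 1}$ descends to a (nowhere-vanishing) section related to $F$ and that $\partial_{a^n}''\phi=0$ is equivalent to $s^{-1}\phi$ being a holomorphic section of $E\otimes F$. Your write-up merely makes explicit the equivariance bookkeeping (the cancellation of $\tilde\rho$ against $\tilde\rho^{-1}$) that the paper leaves implicit.
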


\section{Weitzenb\"ock-type formula} \label{sec:Weitz-form}
We prove Proposition \ref{prop:Delta_a-repr} by passing to the universal cover, $\bH$, and proving there an equivalent relation.  We use the complex covariant derivatives as in the beginning of Section \ref{sec:cc-con-exist}.  
 \begin{proposition}\label{prop:Delta_A-repr}
 \begin{align*}
{\partial''_A}^*{\partial''_A} &= \frac{1}{2}(-\Delta_A - *F_A). \numberthis 
\label{Lapl-formula}
\end{align*} \end{proposition}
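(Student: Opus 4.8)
The plan is to prove \eqref{Lapl-formula} by a direct computation in a local conformal coordinate on $\bH$; since it is an identity of local operators, this simultaneously gives the form of Proposition~\ref{prop:Delta_a-repr} on $X$ after pulling back along the covering map. Fix a coordinate $z = x_1 + i x_2$ in which the metric reads $\lambda(z)\,|dz|^2$, so that the volume form is $\lambda\, dx_1\wedge dx_2$ and the Hodge star acts as in \eqref{star}. Write $\nabla_j := \partial_{x_j} + iA_j$ for the two components of $\nabla_A$, and recall from the start of Section~\ref{sec:cc-con-exist} that $\partial_A'\phi = (D_z\phi)\,dz$ and $\partial_A''\phi = (D_{\bar z}\phi)\,d\bar z$, where $D_z = \tfrac{1}{2}(\nabla_1 - i\nabla_2)$ and $D_{\bar z} = \tfrac{1}{2}(\nabla_1 + i\nabla_2)$ are the $(1,0)$- and $(0,1)$-components of $\nabla_A$.

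First I would record the orthogonal splitting $-\Delta_A = \nabla_A^*\nabla_A = (\partial_A')^*\partial_A' + (\partial_A'')^*\partial_A''$. This is immediate from $\nabla_A = \partial_A' + \partial_A''$ once one notes that $\partial_A'\phi \in \Om^{1,0}$ and $\partial_A''\phi \in \Om^{0,1}$ lie in mutually orthogonal subspaces of the complexified one-forms, so that $\nabla_A^*$ restricted to each summand is exactly $(\partial_A')^*$, respectively $(\partial_A'')^*$, and the cross terms drop out. A short integration by parts against the fibre and base inner products then evaluates the two summands as the second-order operators $(\partial_A'')^*\partial_A'' = -2\lambda^{-1} D_z D_{\bar z}$ and $(\partial_A')^*\partial_A' = -2\lambda^{-1} D_{\bar z} D_z$; as a sanity check, their sum is $-2\lambda^{-1}(D_z D_{\bar z} + D_{\bar z} D_z) = -\lambda^{-1}(\nabla_1^2 + \nabla_2^2) = -\Delta_A$.

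Next I would subtract these two evaluations. The difference is the zeroth-order operator $(\partial_A')^*\partial_A' - (\partial_A'')^*\partial_A'' = 2\lambda^{-1}[D_z, D_{\bar z}]$, and the commutator $[D_z, D_{\bar z}]$ is, up to a fixed scalar, the curvature: expanding it, it equals a multiple of $[\nabla_1,\nabla_2]$, which equals a multiple of $\partial_{x_1}A_2 - \partial_{x_2}A_1$ --- precisely the coefficient of $F_A = dA$ in the frame $dx_1\wedge dx_2$; dividing by $\lambda$ via \eqref{star} turns this into the multiplication operator $*F_A$, giving $(\partial_A')^*\partial_A' - (\partial_A'')^*\partial_A'' = *F_A$. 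Adding this to the splitting of the previous step yields $2(\partial_A'')^*\partial_A'' = -\Delta_A - *F_A$, which is \eqref{Lapl-formula}.

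The conceptual content is just the identity ``commutator of covariant derivatives $=$ curvature'', and it is qualitatively clear that the difference of the two Bochner Laplacians is of order zero and is a multiple of $*F_A$. The real work --- and the main source of error --- is the bookkeeping of the numerical constants: the conformal factor $\lambda$, the factors of $2$ and $i$ relating $\partial_{x_1},\partial_{x_2}$ to $D_z, D_{\bar z}$, the orientation and Hodge-star conventions entering $*F_A$, and the sign in $\nabla_A = d + iA$; these must be fixed consistently so that the sign of $*F_A$ in the final formula is the one for which $\Null\partial_A''$ is the space of holomorphic sections (cf. Proposition~\ref{prop:Delta_a-part_a} and Corollary~\ref{cor:NullDeltaA-NullPart}). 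An alternative, essentially convention-free route is to quote the Bochner--Kodaira--Nakano identity on the Kähler surface, for which $[i\Theta,\Lambda]$ acts on sections by multiplication by $*F_A$ up to sign; but the hands-on computation above is self-contained and in the spirit of the paper.
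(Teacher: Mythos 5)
Your argument is sound in outline but organizes the computation differently from the paper. The paper proves the identity in one stroke: it writes the adjoint explicitly as ${\partial''_A}^* = *\bigl(-\partial_z \otimes dz - \tfrac12 (A_2 + i A_1) \otimes dz\bigr)*$, composes with $\partial''_A$, and expands the product in the real coordinates $x_1, x_2$ until the combination $-\Delta + |A|^2 - iA\cdot\nabla - i\nabla\cdot A$ (i.e.\ $-\Delta_A$ up to the conformal factor) and the curvature coefficient $\partial_{x_1}A_2 - \partial_{x_2}A_1$ become visible. You instead split $-\Delta_A = (\partial_A')^*\partial_A' + (\partial_A'')^*\partial_A''$ by type-orthogonality and reduce everything to the commutator $[D_z, D_{\bar z}]$; this is the Bochner--Kodaira organization of the same local computation. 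Your route buys a built-in sanity check (the anticommutator reproduces $-\Delta_A$) and isolates the curvature contribution cleanly, whereas the paper's single expansion is shorter to write but forces all terms to be tracked simultaneously. Either way the identity is purely local, so the reduction to $\bH$ is harmless.

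The one place where your write-up falls short of a proof is exactly the step you flag as the danger zone: you assert $(\partial_A')^*\partial_A' - (\partial_A'')^*\partial_A'' = *F_A$ ``up to a fixed scalar'' without evaluating the scalar, and that scalar --- in particular its sign --- is the entire content of the proposition, since the remaining input is only the trivial identity $D_z D_{\bar z} + D_{\bar z} D_z = \tfrac12(\nabla_1^2 + \nabla_2^2)$. Carry it out: with $\nabla_j = \partial_{x_j} + iA_j$ one has $[\nabla_1, \nabla_2] = i(\partial_{x_1}A_2 - \partial_{x_2}A_1)$, hence $[D_z, D_{\bar z}] = \tfrac{i}{2}[\nabla_1,\nabla_2] = -\tfrac12(\partial_{x_1}A_2 - \partial_{x_2}A_1)$, so that $2\lambda^{-1}[D_z, D_{\bar z}] = -\lambda^{-1}(\partial_{x_1}A_2 - \partial_{x_2}A_1) = -*F_A$ with the star normalized as in \eqref{star}. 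This is the opposite of the sign you assert, and it propagates directly into the sign of $*F_A$ in the final formula. You therefore cannot simply declare the conventions ``fixed consistently''; you must either exhibit the convention (the sign of $a$ in $\nabla_a = d + ia$, the orientation entering $*$, or the definition of $\partial_A''$) under which the commutator comes out with the sign you need, or confront the discrepancy head-on. The binding consistency requirement is Proposition~\ref{prop:Delta_a-part_a}: for $n>0$ one has $*F_{A^n} = b > 0$, and the formula must make $b$ the bottom of the spectrum of $-\Delta_{A^n}$, attained precisely on $\Null\partial''_{A^n}$. Until the commutator is evaluated and reconciled with that requirement, the proof is a correct skeleton with its load-bearing joint left unverified.
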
 
\begin{proof} To prove this we need $*d\overline{z} = *(dx_1 - idx_2) = dx_2 + i dx_1 = id\overline{z}$ and $dz\wedge d\overline{z} = -2i dx_1 \wedge dx_2$. Furthermore we compute the adjoint of ${\partial''_A}$ to be ${\partial''_A}^* = *(-\partial_z \otimes dz - \frac12 (A_2 + i A_1) \otimes dz)*$. Now, we have 
\begin{align*}
{\partial_A''}^*{\partial_A''} &=\frac{i}{4} * (- 2\partial_z \otimes dz - (A_2 + i A_1) \otimes dz) (2\partial_{\overline{z}} \otimes d\overline{z} - (A_2 - iA_1) \otimes d\overline{z}) \\
&=\frac{i}{4}*(-4\partial_z\partial_{\overline{z}} + A_1^2 + A_2^2 - 2(A_2 + i A_2)\partial_{\overline{z}} + 2\partial_z (A_2- iA_1))dz \wedge d\overline{z} \\
&=\frac{i}{4} * (- \Delta +  A_1^2 + A_2^2  -\frac{d}{dx_2} A_1 + \frac{d}{dx_1} A_2 - i A\cdot \nabla - i \nabla \cdot A ))dz \wedge d\overline{z} \\
&= \frac{1}{2\lambda}(- \Delta +  A_1^2 + A_2^2  -\frac{d}{dx_2} A_1 + \frac{d}{dx_1} A_2 - i A\cdot \nabla - i \nabla \cdot A), 
\end{align*}
which gives \eqref{Lapl-formula}. \end{proof}

 \begin{corollary}\label{cor:Delta_A-part_A} Let $A^n$ be a connection of the constant curvature, i.e.  
 $*F_{A^n}=b := \frac{2\pi n }{|X|}= \frac{n }{2g-2}$. 
 Then 
  \begin{align}\label{DeltaA-partA}\Null (-\Delta_{a^n} - b) = \Null {\partial''_{a^n}}
   \end{align} 
Hence,  $b$ is an eigenvalue of 
 $-\Delta_{A^n}$ iff $\Null {\partial''_{A^n}}\neq \{0\}$ and if $b$ is an eigenvalue of  $-\Delta_{A^n}$, then it is the smallest eigenvalue. \end{corollary}

\section{Admissible  connections} \label{sec:admissible-conn}

The purpose of this appendix is to outline relevant parts of the theory of holomorphic line bundles over Riemann surfaces - specifically, the existence of holomorphic sections of such bundles - to help the motivated reader, with the background elsewhere, to navigate the main body of this paper.

 First, recall that holomorphic structures on $E_n$ are typically specified in terms of transition functions for $E_n$ that are holomorphic with respect to the complex structure on $X$.  
 
For our purpose 
it is natural to use another description of holomorphic bundles that is phrased directly in terms of objects we use, namely, 
 derivations (connections), $\p_E''$, of  type $(0,1)$ on $E$; i.e., operators satisfying 
\begin{eqnarray}
\p_E'': \mathcal{E}(X, E) \to \mathcal{E}^{(0,1)}(X, E) \\
\label{deriv} \p_E'' f \xi = \bar{\p}f \cdot \xi + f \p_E'' \xi
\end{eqnarray}
where  $\mathcal{E}(X, E)$ and $\mathcal{E}^{(0,1)}(X, E)$ are the spaces of sections and $(0,1)-$ forms on $X$ with values in $E$, respectively, $f$ is a function on $X$ and $\xi$ is a section of $E$.  
Two derivations are equivalent if and only if they are conjugate to one another under a complex-valued gauge transformation.  

Let 
$\mathcal{A}_{cc}$ denote the space of constant curvature unitary (with respect to $k$) connections on $E$ and let $\mathcal{G}$ denote the group of gauge transformations which preserve $k$. 
\begin{theorem}  \label{thm:KM} (i)  The space of holomorphic structures on $E$ is in a 1:1 correspondence with the space, $\mathcal{C}$, of derivations, $\p''_E$ of  type $(0,1)$ on $E$.   This correspondence descends to the corresponding gauge-equivalence classes. 

(ii)  Let  $\partial_a^{\prime\prime}$ denote the type $(0,1)$-component of $\nabla_a$ and  $\mathcal{G}^c$ denote the group of complex-valued gauge transformations. Then
\begin{eqnarray} \label{NS}
\mathcal{A}_{cc}/\mathcal{G} &\simeq & \mathcal{C}/\mathcal{G}^c,\\ \label{NSmap}
\left[\nabla_a\right] & \to & \left[\partial_a^{\prime\prime}\right]
\end{eqnarray}
where the brackets denote the corresponding gauge equivalence class in each case.
\end{theorem}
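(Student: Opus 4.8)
The statement to prove is Theorem~\ref{thm:KM}, the Koszul--Malgrange / Narasimhan--Seshadri-type correspondence: part (i) that holomorphic structures on $E$ are in bijection with $(0,1)$-derivations $\p_E''$ (descending to gauge classes), and part (ii) that $\cA_{cc}/\cG \simeq \cC/\cG^c$ via $[\n_a]\mapsto[\p_a'']$.

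\textbf{Approach.} For part (i), the plan is to invoke the Koszul--Malgrange integrability theorem: on a Riemann surface every $(0,1)$-derivation $\p_E''$ satisfies $(\p_E'')^2=0$ trivially (for dimensional reasons, $\Om^{0,2}=0$), so $\p_E''$ endows $E$ with a unique holomorphic structure whose local holomorphic sections are the solutions of $\p_E''\xi=0$; conversely a holomorphic structure, given by holomorphic transition functions, determines $\p_E''$ locally by $\bar\p$ in a holomorphic frame and these patch because the transition functions are holomorphic. One then checks the two constructions are mutually inverse, and that conjugating $\p_E''$ by a complex gauge transformation $g$ (i.e. $g^{-1}\p_E'' g$) corresponds exactly to the biholomorphism of holomorphic bundles induced by $g$, so the bijection descends to gauge-equivalence classes. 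This is where I would be brief and cite \cite{Donaldson} or the standard references already in the bibliography; the key nontrivial input is the elliptic solvability ($\bar\p$ has local solutions), which is the Dolbeault lemma already used in Appendix~\ref{sec:bundle-holomorphiz}.

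\textbf{Part (ii), the main construction.} Here the plan has two directions. \emph{Surjectivity of $[\n_a]\mapsto[\p_a'']$ at the level of $\cC/\cG^c$}: given any derivation $\p_E''\in\cC$, I would produce a constant-curvature unitary connection in its complex-gauge orbit. The classical route is: (a) fix the hermitian metric $k$; any $\p_E''$ determines a unique $k$-unitary connection $\n$ with $\n^{0,1}=\p_E''$ (the Chern connection); (b) its curvature $F_\n$ is an imaginary $(1,1)$-form, and one seeks a real function $u$ so that the complex gauge transformation $g=e^{u}$ (equivalently, conformally rescaling the metric to $e^{2u}k$) makes the new Chern connection have curvature $b\,\omega$ with $b=2\pi n/|X|$ forced by Chern--Weil \eqref{flux-quant}; the equation for $u$ is the scalar equation $\Delta u = (\text{something}) - b$, solvable on the compact surface $X$ precisely because the integral obstruction vanishes by the degree computation. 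This is the step I expect to be the main obstacle to write cleanly, since it is really the (abelian, hence linear) case of the Hermitian--Yang--Mills / Narasimhan--Seshadri theorem and needs the solvability of a linear elliptic PDE on $X$; but in the line-bundle case it reduces to solving a \emph{linear} Poisson equation, so the obstruction is mild and can be dispatched by Hodge theory. \emph{Injectivity}: if $\n_a,\n_{a'}\in\cA_{cc}$ have $\p_a''$ and $\p_{a'}''$ complex-gauge equivalent, say $\p_{a'}''=g^{-1}\p_a'' g$, I would show $g$ can be replaced by a unitary gauge transformation: writing $g=g_u\,e^{v}$ with $g_u$ unitary and $v$ self-adjoint (real, in the abelian case), the condition that both connections be $k$-unitary and of constant curvature forces $v$ to be harmonic hence (since it is real and $X$ compact) constant, so $e^{v}$ is a constant unitary scalar and $g$ is, up to that, in $\cG$.

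\textbf{Assembly and the hard part.} Finally I would combine: part (i) gives $\cC/\cG^c\simeq\{\text{holomorphic structures}\}/\sim$; the two directions of part (ii) give a well-defined inverse to $[\n_a]\mapsto[\p_a'']$, so the map \eqref{NSmap} is a bijection, and naturality of all constructions (Chern connection, conformal rescaling) makes it compatible with the descent to gauge classes. The genuinely load-bearing step is the existence half of part (ii) --- finding the constant-curvature representative in each complex-gauge orbit --- which is the abelian Narasimhan--Seshadri statement; for a line bundle this is elementary (solve $\Delta u = f - \bar f$ with $\bar f$ the average, using that $\int_X f\,\omega = 2\pi n$ by Chern--Weil), and I would present it as a short lemma rather than appealing to the full nonabelian theorem. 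Everything else is formal bookkeeping with the $(0,1)$-parts of connections and the Dolbeault lemma already established in Appendix~\ref{sec:bundle-holomorphiz}.
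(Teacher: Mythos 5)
Your proposal is correct and is essentially the argument the paper relies on: the paper does not prove Theorem \ref{thm:KM} in the text but delegates part (i) to Kobayashi (with the Chern-connection construction noted in Remark \ref{rem:Pic}) and part (ii) to the Narasimhan--Seshadri theorem in the references, and your sketch reproduces exactly that route, correctly observing that in the abelian case the existence half reduces to a solvable linear Poisson equation $\Delta u = f - b$ with the integral obstruction killed by Chern--Weil. No gaps.
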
 
 For the proof of the first statement, see \cite{Kob}, Propositions 1.3.5, 1.3.7 and 1.4.17. We comment on it 
 in Remark \ref{rem:Pic}.   The second one is a special case of a more general result for vector bundles due to Narasimhan and Seshadri that can be found in  \cite{Don1} and  Appendix by O. Garc{\'i}a-Prada in \cite{Wells} (see  Theorem 2.7).

\medskip

Thus equivalent derivations correspond to equivalent holomorphic structures. Given this we will henceforth refer to $\mathcal{C}$ as the space of {\it holomorphic structures} on $E$.

 \begin{remark} \label{rem:Pic}
Given a holomorphic structure on a line bundle $E$, in the sense of holomorphic transition functions on the bundle, and a hermitian metric, $k'$ on $E$  there is a canonical connection, $D_{k^\prime}$, on $E$ which is compatible with the metric $k^\prime$ and whose $(0,1)$-component annihilates holomorphic sections.  (For details we refer the reader to Theorem III.2.1 of \cite{Wells}). This gives the forward direction of Theorem \ref{thm:KM}(i).
\end{remark}

Next, we give some key definitions. $Pic^{(n)}(X)$ denotes 
 the moduli space (i.e. the space of complex gauge equivalence classes ) of  holomorphic line bundles of fixed degree $n$.  
For each $n$, $Pic^{(n)}(X)$ is isomorphic to $Jac(X)$. 
This isomorphism is effectively determined by the Abel-Jacobi map (see \eqref{AJ-map}).

As we mentioned above, we are interested in existence of holomorphic sections of holomorphic line bundles. Though every holomorphic bundle has a meromorphic section, not every such bundle has a holomorphic one.  Let $\Sigma^{(n)} \subset Pic^{(n)}(X)$ denote the subset of degree $n$ holomorphic line bundles which have a holomorphic section. 

 Recall that the divisor, $D$, on a Riemann surface $X$ is a finite collection of points, $P_i\in X$, with associated integers, $n_i$, written as $D:=\sum n_i P_i$. The number $\deg(D):=\sum n_i $ is called the degree of $D$. 

With every meromorphic section, $\vphi$, one can associate the divisor, denoted as $(\vphi)$, which is the collection of its zeros and poles, together with their positive and negative orders (multiplicities), respectively. 
Different sections of a holomorphic line bundle have linearly equivalent divisors: Two divisors are said to be  linearly equivalent if their difference is a divisor of a meromorphic function on $X$. In this way with each line bundle we associate a collection of linearly equivalent divisors.  
Conversely, a divisor uniquely determines an equivalence class of holomorphic line bundles; i.e., a point in $Pic(X)$. (\cite{GH}).


It is a consequence of the definition of the degree of a line bundle in terms of zeros of its sections that the degree of the divisor is equal to the degree of the bundle.   

By this description, the divisor of a holomorphic section has only positive integer coefficients (the value of the coefficient corresponds to the multipicity of the divisor at that point). Such divisors are said to be {\it effective}. An effective divisor corresponds to a unique point in  
 the $n$-fold symmetric product, $X^{(n)}$, which parametrizes unordered $n$-tuples of points on $X$. (It is straightforward to check that $X^{(n)}$ is a smooth manifold \cite{GH}.) 
 
Let $\tilde{\Sigma}^{(n)}$ denote  the set of pairs $(E,D)$ where $E$ is a holomorphic line bundle of degree $n$, which has a holomorphic section, and $D$ is the divisor of a holomorphic section of $E$. Then projection onto the second factor,
$(E,D) \to D$, defines a 1:1 map \begin{eqnarray} \label{pi2}\pi_2: \tilde{\Sigma}^{(n)} \to X^{(n)},\end{eqnarray} which is in fact onto, since one can construct a bundle and its holomorphic section directly from an effective divisor \cite{GH}. One then makes use of a special case of {\it Abel's theorem}:
\begin{theorem} \cite{GH} \label{thm:AT}
Two effective divisors are the zeroes of two holomorphic sections of the {\it same} bundle if and only if they are mapped to the same point in 
\begin{eqnarray} \label{Wn}
W_n := \Phi(X^{(n)})\subset Jac(X),
\end{eqnarray}
Moreover, if 
$D, D'\in\Phi^{-1}(z)$ for $z \in W_n$, then $D$ and $D'$ are linearly equivalent, i.e. there exists a meromorphic function, $f$ on $X$ such that $(f)=D-D'$. 
\end{theorem}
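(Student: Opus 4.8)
The plan is to recognize this as \emph{Abel's theorem} and to reduce it, via the divisor--bundle dictionary recalled just above, to the single assertion: a degree-zero divisor $E_0$ on $X$ is the divisor of a meromorphic function if and only if $\Phi(E_0)=0$ in $Jac(X)$ (here $\Phi$ is extended to divisors of degree zero, the base point $P_0$ dropping out of the sum). Granting this, both halves of the theorem follow at once. If $s,s'$ are holomorphic sections of one bundle $E$ with $(s)=D$ and $(s')=D'$, then $f:=s/s'$ is meromorphic with $(f)=D-D'$, whence $\Phi(D)-\Phi(D')=\Phi(D-D')=0$; conversely, if $\Phi(D)=\Phi(D')$, Abel's theorem produces a meromorphic $f$ with $(f)=D-D'$, and multiplying the canonical section of $\cO(D)$ by $f$ exhibits $D'$ as the divisor of a holomorphic section of $\cO(D)\cong\cO(D')$. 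The ``moreover'' clause is precisely this function $f$.

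For the easy half of Abel's theorem (a principal divisor maps to $0$), I would cut $X$ along a canonical homology basis $a_1,\dots,a_g,b_1,\dots,b_g$ into a simply connected $4g$-gon $\Delta$, on which each primitive $\pi_j(P):=\int_{P_0}^P\zeta_j$ is single valued, and apply the residue theorem to the meromorphic one-form $\pi_j\, d\log f$ on $\Delta$. The sum of residues equals $\sum_i n_i\pi_j(P_i)$, i.e.\ the $j$-th component of $\Phi((f))$, where $(f)=\sum_i n_iP_i$. On the other hand, pairing the identified sides of $\partial\Delta$ and using that $d\log f$ descends to $X$, the boundary integral collapses (Riemann's bilinear trick) to $2\pi i\sum_k(\delta_{jk}B_k-\tau_{jk}A_k)$, where $A_k,B_k\in\Z$ are the winding numbers of $f$ around $a_k,b_k$; thus $\Phi((f))$ is a point of the period lattice $\Lam$, i.e.\ zero in $Jac(X)$.

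For the hard half ($\Phi(E_0)=0\Rightarrow E_0$ principal), I would write $E_0=\sum_{i=1}^n(P_i-Q_i)$ and, for each $i$, take $\eta_i$ to be the normalized differential of the third kind with simple poles of residue $+1$ at $P_i$ and $-1$ at $Q_i$ and vanishing $a$-periods. Set $\omega:=\sum_i\eta_i$; its $a$-periods vanish, and the reciprocity law between differentials of the first and third kinds (again a bilinear-relations computation on $\Delta$) gives $\oint_{b_k}\omega=2\pi i\sum_i\int_{Q_i}^{P_i}\zeta_k$, which is $2\pi i$ times the $k$-th component of $\Phi(E_0)$. Since $\Phi(E_0)=0$, this vector lies in $\Lam$, so there are integers $p_k,q_l$ with $\sum_i\int_{Q_i}^{P_i}\zeta_k=p_k+\sum_l q_l\tau_{kl}$; then $\omega':=\omega-2\pi i\sum_l q_l\zeta_l$ has every $a$- and $b$-period in $2\pi i\Z$ (using $\oint_{a_k}\zeta_l=\delta_{kl}$, $\oint_{b_k}\zeta_l=\tau_{lk}$ and the symmetry $\tau_{kl}=\tau_{lk}$), so $f(P):=\exp\big(\int_{P_0}^P\omega'\big)$ is a well-defined meromorphic function on $X$ with $(f)=E_0$.

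The genuine obstacle is this last, existence, direction --- specifically, setting up the reciprocity identity $\oint_{b_k}\eta_i=2\pi i\int_{Q_i}^{P_i}\zeta_k$ and the careful bookkeeping of periods modulo $2\pi i\Z$; the easy direction and the reduction are routine once the period machinery is in place. As the statement is entirely classical, an acceptable alternative is simply to cite \cite{GH} (Ch.~2) or \cite{FarKra} (Ch.~III) for the complete argument.
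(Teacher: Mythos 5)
Your proposal is correct, but note that the paper does not actually prove Theorem \ref{thm:AT}: it states the result with the citation \cite{GH} and uses it as a classical black box, which is exactly the fallback your closing sentence anticipates. What you supply beyond that is the standard textbook argument --- the reduction of both halves to Abel's theorem for degree-zero divisors via $f=s/s'$, the residue-theorem computation of $\frac{1}{2\pi i}\oint_{\partial\Delta}\pi_j\,d\log f$ on the fundamental polygon for the ``only if'' direction, and the construction $f=\exp\bigl(\int_{P_0}^P\omega'\bigr)$ from normalized differentials of the third kind together with the reciprocity law for the existence direction --- and each step is sound; this is precisely the proof found in \cite{GH}, Ch.~2 and \cite{FarKra}, Ch.~III. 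The only slip is cosmetic: with your convention $(f)=D-D'$, it is $s_D/f$ rather than $s_D\cdot f$ that gives the holomorphic section of $\cO(D)$ with divisor $D'$. What your self-contained version buys, beyond not outsourcing the argument, is that it makes explicit the role of the normalized third-kind differentials $\tau_{P,Q}$ and their periods, which is exactly the machinery the paper itself invokes later in Appendix \ref{sec:Null-expl} to write down the Baker--Akhiezer representation of $\Null\p_{a^c}''$; the paper's citation-only treatment is shorter but leaves that connection implicit.
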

 It also follows from the first statement in Abel's theorem that the admissible bundles,  i.e. the bundles with a dimension one space of holomorphic sections, of degree $n$ correspond to those points $z \in W_n$ for which $\Phi^{-1}(z)$ is a unique point in $X^{(n)}$. 
 \medskip
 
 We can now more properly define the map $I: X^{(n)}  \ra \cA_{cc, n}$ by 
 \[I := \mathcal{I} \circ \pi_1 \circ \pi_2^{-1},\] where $\mathcal{I}: \mathcal{C}/\mathcal{G}^c \ra \mathcal{A}_{cc}/\mathcal{G}$ is the reverse direction of the Narasimahn-Seshadri isomorphism \eqref{NSmap}, $\pi_1: \tilde{\Sigma}^{(n)}\ra Pic^{(n)}(X)$ is given by projection onto the first factor $(E,D) \to E$ in $\tilde{\Sigma}^{(n)}$ and $\pi_2$ is given in \eqref{pi2}.
 (In defining the composition with $\mathcal{I}$ here we are using the isomorphism, $Pic^{(n)}(X)\simeq  \mathcal{C}/\mathcal{G}^c$, which is due to Theorem \ref{thm:KM} (i).

\medskip
 
In fact, one can say a bit more here: under $I$ the admissible connections in 
$\mathcal{I}(\Sigma^{(n)})$ correspond to the regular values in $W_n$ of $\Phi$ (see \cite{FarKra}, III.11.11). By Sard's theorem, this is an open dense submanifold of $W_n$. 

\medskip
Equivalently, by the implicit function theorem, these are the smooth points in the variety $W_n$. So we denote this set by $W_{n, \text{smooth}}$. 
Then $S^{(n)} := \Phi^{-1}(W_{n, \text{smooth}})$ 
  must also be an open dense submanifold of $X^{(n)}$. 

\medskip

\DETAILS{The set of admissible $a_c$, of degree $n$, has the structure of a complex manifold which can be described in terms of $W_n$, 
which is an analytic (in fact algebraic) subvariety of $Jac(X)$. The regular values of $\Phi$ in $W_n$ are precisely the smooth points, $W_{n, \text{smooth}}$, of this analytic variety. The singular points of $W_n$ correspond precisely to the holomorphic structures on $E$ for which $h^0(X,E) > 1$. Again this follows from Abel's theorem \cite{FarKra}, III.11.12. So we have

\begin{corollary}
The space of solutions of (\ref{GLE}), for $1 \leq n \leq g$, whose existence is established in Theorem \ref{thm:AL-exist-RSgener'}, is fibered over the complex sub-manifold $W_{n, \text{smooth}} \subset Jac(X)$. 
\end{corollary}}

\paragraph{Explicit form of $W_{n, \text{smooth}}$.} For certain degrees one can give explicit equations that determine $W_{n, \text{smooth}}$. These are expressed in terms of Riemann's theta function, 
\begin{eqnarray} \label{theta}
\theta(\vec{z}, \tau) &=& \sum_{N \in \mathbb{Z}^g} \exp 2\pi i \left( \frac12 {^tN} \tau N + {^tN} \vec{z} \right)
\end{eqnarray}
where $\tau$ is a $g \times g$ matrix with entries $\tau_{ij} = \int_{b_j} \zeta_i$ where $\zeta_j$ are normalized holomorphic differentials and $\vec z\in\mathbb{C}^g$.
 
\begin{theorem}\label{thm:AL-exist}   There is a unique constant vector $-\kappa$, known as Riemann's constant, \cite{FarKra} VI.3.6, depending on $P_0$ such that

(i) For $n = g$,
\[W_{g, \text{smooth}}=Jac(X) \backslash (\{\vec z\in Jac(X): \theta(\vec{z}, \tau) = 0\}-\kappa).\]

(ii) For $n = g-1,$  
\[W_{g-1, \text{smooth}}=W_{g-1} \backslash (\{\vec z\in Jac(X): \theta(\vec{z}, \tau) = 0,\ \theta_{z_i}(\vec{z}, \tau) = 0 \forall i\}-\kappa).\]

(iii) for $n = 1, W_{1, \text{smooth}} = W_1$; i.e., $W_1$ is entirely smooth.
\end{theorem}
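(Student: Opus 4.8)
The plan is to reduce the whole statement to two classical theorems about the Abel--Jacobi map --- Riemann's vanishing theorem and the Riemann--Kempf singularity theorem --- together with Riemann--Roch, and then to dispose of the three degree ranges separately. By Proposition~\ref{prop:classif-adm-con} and the analysis of Appendix~\ref{sec:admissible-conn} (Abel's Theorem~\ref{thm:AT} and the Riemann--Kempf description of the singularities of $W_n$), the set $W_{n,\text{smooth}}$ is exactly the locus of $z\in W_n$ whose associated degree-$n$ line bundle $L_z$ has $h^0(X,L_z)=1$; equivalently, it is the set of smooth points of the variety $W_n$, equivalently the regular values of $\Phi|_{X^{(n)}}$. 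So the content to extract is a $\theta$-function description of the ``bad'' set $\{z\in W_n: h^0(X,L_z)\ge 2\}$. The two classical inputs I would invoke are: (a) \emph{Riemann's vanishing theorem} (\cite{FarKra}, III.11), which identifies the theta divisor as a translate of $W_{g-1}$, namely $\{\vec z\in Jac(X): \theta(\vec z,\tau)=0\}=W_{g-1}+\kappa$, so that $\{\theta=0\}-\kappa=W_{g-1}$; and (b) the \emph{Riemann--Kempf singularity theorem}, which gives the multiplicity of the theta divisor $\Theta$ at the point $\kappa+\Phi(D)$, for $D$ effective of degree $g-1$, as $h^0(X,\mathcal O(D))$, whence $\mathrm{Sing}\,\Theta=\{\vec z:\theta(\vec z,\tau)=0,\ \theta_{z_i}(\vec z,\tau)=0\ \forall i\}$ and this singular locus corresponds precisely to the bundles with $h^0\ge 2$.

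With these in hand the three cases run as follows. For (iii), $n=1$: since $g\ge 2$, no degree-one line bundle can have $h^0\ge 2$, because a two-dimensional space of sections of $\mathcal O(P)$ would exhibit $X$ as a degree-one branched cover of $\mathbb{P}^1$, forcing $g=0$; hence $\Phi:X\to W_1$ is an embedding with everywhere invertible differential and $W_{1,\text{smooth}}=W_1$ (this also reproves Proposition~\ref{prop:classif-adm-con}(iii)). For (ii), $n=g-1$: $W_{g-1}$ is the codimension-one subvariety occurring in Riemann's theorem; its singular locus is the Brill--Noether locus $\{h^0\ge 2\}$ (the statement already used in Appendix~\ref{sec:admissible-conn}), and by input (a) this locus equals $\mathrm{Sing}\,\Theta-\kappa$, which by input (b) is $\big(\{\theta=0,\ \theta_{z_i}=0\ \forall i\}\big)-\kappa$; removing it from $W_{g-1}$ yields the displayed formula.

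Case (i), $n=g$, is the delicate one, because $W_g=Jac(X)$ by Jacobi inversion and is smooth as a variety everywhere, so one must argue directly with regular values of $\Phi:X^{(g)}\to Jac(X)$. The key step: if $z\notin W_{g-1}$ then, via Abel's theorem and the relation between the degree-$g$ and degree-$(g-1)$ Abel--Jacobi maps through the basepoint $P_0$, one has $h^0(X,L_z(-P_0))=0$, and then the exact sequence $0\to L_z(-P_0)\to L_z\to L_z|_{P_0}\to 0$ forces $h^0(X,L_z)\le 1$, hence $=1$ by Riemann--Roch, so $\Phi$ has a single reduced preimage over $z$ with invertible differential; together with input (a), $\{\theta=0\}-\kappa=W_{g-1}$, this gives $W_{g,\text{smooth}}=Jac(X)\setminus(\{\theta=0\}-\kappa)$. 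Throughout I would fix once and for all the sign conventions of \cite{FarKra} (in particular the identity $\Phi(K)=-2\kappa$ for the canonical class, which together with the evenness $\theta(-\vec z,\tau)=\theta(\vec z,\tau)$ is what forces the translate to be by exactly $-\kappa$ in all three formulas).

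The principal obstacle I anticipate is bookkeeping rather than ideas: one must keep track of the basepoint-dependence of $\Phi$ and of Riemann's constant $\kappa$, the choice of canonical homology basis normalizing $\vec\zeta$ and $\tau$, and the precise form of Riemann's vanishing and singularity theorems (e.g.\ whether the theta divisor is written $W_{g-1}+\kappa$ or $-\kappa+W_{g-1}$, and which translate is meant by ``$\{\theta=0\}-\kappa$''), so that all three displayed formulas emerge with exactly the shift $-\kappa$. This is routine given \cite{FarKra} III.11 and VI.3.6, but it is where all of the content of the ``$-\kappa$'' resides; I would pin down one consistent convention at the outset and sanity-check it on $g=2$, where $W_{g-1}=W_1\cong X$ and the three formulas can be verified directly.
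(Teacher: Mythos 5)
Your treatments of (ii) and (iii) are sound and essentially track the paper: for (iii) you compress the paper's two-step self-contained argument (injectivity of $\Phi$ on $X$ via the degree-one map to $\mathbb{P}^1$, immersion via Riemann--Roch applied to $\mathcal{L}_p$) into the single equivalent statement that no degree-one bundle on a curve of genus $g\ge 2$ has $h^0\ge 2$; for (ii) you correctly identify that the real input is the Riemann singularity (Riemann--Kempf) theorem, which is more precise than the paper's bare citation of Riemann's vanishing theorem.

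The gap is in case (i). Your argument shows that $z\notin W_{g-1}$ implies $h^0(X,L_z(-P_0))=0$, hence $h^0(X,L_z)=1$, hence $z$ is a regular value; this establishes only the inclusion $Jac(X)\setminus(\{\theta=0\}-\kappa)\subseteq W_{g,\text{smooth}}$. You then assert equality without proving the converse, namely that every $z\in W_{g-1}$ is a critical value of $\Phi|_{X^{(g)}}$. That converse does not follow from your exact sequence --- $h^0(L_z(-P_0))\ge 1$ only gives $1\le h^0(L_z)\le 2$, not $h^0(L_z)\ge 2$ --- and it is in fact false: by Riemann--Roch and duality, $h^0(L_z)\ge 2$ for $\deg L_z=g$ if and only if $K\otimes L_z^{-1}$ is effective, i.e.\ $z\in \Phi(K)-W_{g-2}$, a codimension-\emph{two} locus, whereas $\{\theta=0\}-\kappa$ is a translate of the codimension-one set $W_{g-1}$. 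Your own proposed sanity check at $g=2$ exposes this: there $L_z\cong\mathcal{O}(P+P_0)$ for $z\in W_1$ has $h^0=2$ only when $P+P_0\sim K$, so the critical values of $\Phi:X^{(2)}\to Jac(X)$ form the single point $\Phi(K)$, while $\{\theta=0\}-\kappa$ is a curve. The condition that actually characterizes the bad set in degree $g$ is the \emph{identical} vanishing of the function $P\mapsto\theta(\Phi(P)-z-\kappa)$ on $X$ (equivalently $z\in\Phi(K)-W_{g-2}$), not its vanishing at the single point $P=P_0$, which is what $\theta(z+\kappa)=0$ says. So the missing inclusion cannot be supplied as stated; you would need either to replace the excised set in (i) by $\Phi(K)-W_{g-2}$ (described via the identical vanishing above, or via $\mathrm{Sing}$ of a suitable translate) or to downgrade the displayed formula to a one-sided containment.
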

The first two statements are a direct consequence of Riemann's vanishing theorem, \cite{FarKra}, VI.3.7. 
We give a self-contained proof here of the last statement, 
which will also serve to illustrate the general result. 

\medskip

 For all points on $W_1$ to be regular means that $\Phi$ restricted to just $X$ is an embedding; i.e., that $\Phi$ is both 1:1 and an immersion on $X$. $\Phi$ on $X$ can only fail to be 1:1 if the there are two distinct points, $p_1$ and $p_2$, that map to the same point $z \in W_1$. If that were the case, by the second statement of  Theorem \ref{thm:AT}, there must exist a meromorphic function, $f$  on $X$ with a single zero at $p_1$ and a single pole at $p_2$.  
 
 Any meromorphic function, $f$, may be viewed as a non-constant holomorphic map from $X$ to the Riemann sphere $S$ in which the poles of $f$ map to the point at infinity on $S$. By the open mapping theorem this map must be onto $S$. A submersive mapping between two compact manifolds of the same dimension has a well-defined finite degree equal to the number of points, counted with multiplicity, in the inverse image, $f^{-1}(a)$,  independent of whatever point $a$ one chooses. But for $a = 0$ we already know that $f^{-1}(0) = \{p_1\}$ and so the degree of $f$ is 1. In other words $f$ is a 1:1 holomorphic map of $X$ onto $S$. But a 1:1 holomorphic map is a homeomorphism, implying that the genus of 
$X$ equals the genus of $S$ which is zero. But this contradicts our assumption that $g(X) > 0$. 

\medskip

So $\Phi$ must be  1:1. To see that it is also an immersion, observe that,
by the fundamental theorem of calculus the differential of $\Phi$ is the vector of holomorphic differentials  $\vec{\zeta}$. So this fails to be an immersion if and only if there exists a point $p \in X$ at which {\it every} holomorphic differential vanishes. Suppose there was such a point and let $\mathcal{L}_p$ be the line bundle associated to it \cite{Gun3}. Recall that, by duality,  $H^1(X, \mathcal{L}_p)$ is isomorphic to the space of holomorphic differentials vanishing at $p$ which, by our assumption, has dimension $g$. So by the Riemann-Roch theorem one has $\dim H^0(X, \mathcal{L}_p) = 1 -g +1 + g = 2$. But then again, by the argument in the previous paragraph, this would imply that $X$ has genus 0. Hence $\Phi$ is also an immersion on $X$ and therefore  all points in $W_1$ are regular. 
\qquad $\Box$

\section{Explicit representation of 
Null $\p_{a_c}''$} \label{sec:Null-expl}
We briefly recall the description of multivalued functions, $f$,  on $X$ that are multiplicative with respect to the lattice $\Lambda$ generated by the periods of $\vec{\zeta}$ as described in Section \ref{sec:GLE}. (This lattice is isomorphic to the first homology group $H_1(X, \mathbb{Z})$.) More precisely, multiplicativity means that, for elements $\gamma \in \Lambda$,
\begin{eqnarray} \label{mv}
&&f(P + \gamma) = \chi(\gamma) f(P) \,\,\,\, \chi(\gamma) \in \mathbb{C}^*,\\ \label{mult}
&&\chi(\gamma_1 + \gamma_2)  = \chi(\gamma_1) \cdot \chi(\gamma_2) 
\end{eqnarray}
 Maps, $\chi$, from $\Lambda$ to $\mathbb{C}^*$ satisfying the multiplicativity property (\ref{mult}) are called characters of $\Lambda$. If $f$ is a function for which (\ref{mv}, \ref{mult}) are both satisfied, then one says that $\chi$ is the character of the multi-valued function $f$.
\medskip

One says that a character is {\it normalized} if it takes its values in $U(1)$ and that it is {\it inessential} if it is the character of a non-vanishing, holomorphic multi-valued function. 
\medskip

 Now we fix, once and for all,  
 \begin{enumerate}
\item a point $Q_0 \in X$;
\item the associated one-point line bundle $\mathcal{L}_{Q_0}$ (see Theorem \ref{thm:AL-exist} (iii));
\item a holomorphic section, $s_0(P)$, of $\mathcal{L}_{Q_0}$, unique up to an overall constant.
\end{enumerate}
We also make use of the following constructive result:

\begin{theorem}[\cite{FarKra} III.9.10] \label{explicit} Every divisor D of degree zero is the divisor of a unique (up to a multiplicative constant) multiplicative multivalued function belonging to a unique
normalized character.
\end{theorem}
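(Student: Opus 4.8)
The plan is to split the assertion into three pieces---existence of \emph{some} multiplicative multivalued function with divisor $D$, adjustment of its character so as to be normalized, and uniqueness---with the only analytic input beyond standard facts being the Hodge decomposition on the compact surface $X$ together with the maximum principle. For existence I would write $D=\sum_{k=1}^{m}(P_k-Q_k)$ (the points need not be distinct) and take $\omega_k$ to be the abelian differential of the third kind with simple poles of residue $+1$ at $P_k$ and $-1$ at $Q_k$, holomorphic elsewhere, normalized by $\oint_{a_j}\omega_k=0$ for all $j$; such $\omega_k$ exists and is unique by the classical existence theory for abelian differentials. Setting $\eta:=\sum_k\omega_k$ and $f_0(P):=\exp\big(\int_{P_0}^{P}\eta\big)$ along paths avoiding the poles, the integrality of the residues makes $f_0$ depend only on the homotopy class of the path, the local form $\eta\sim\pm\,dz/z$ near $P_k,Q_k$ gives $(f_0)=D$, and continuation along a cycle $\gamma$ multiplies $f_0$ by $\chi_0(\gamma):=\exp(\oint_\gamma\eta)$, which is a character of $\Lambda$ with trivial $a$-periods (by normalization) and $b$-periods determined by the reciprocity law for differentials of the first and third kinds. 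Thus $f_0$ is a multiplicative multivalued function with divisor $D$, though $\chi_0$ need not be normalized.

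The heart of the argument is the structural fact that $\mathrm{Hom}(\Lambda,\mathbb{C}^*)$ is the internal direct product of the subgroup of \emph{inessential} characters---those of the form $\gamma\mapsto\exp(\oint_\gamma\theta)$ with $\theta$ a holomorphic $1$-form, equivalently the characters of nowhere-vanishing holomorphic multivalued functions $\exp(\int\theta)$---and the subgroup $\mathrm{Hom}(\Lambda,U(1))$ of normalized characters. For the factorization, given $\chi_0(\gamma)=\exp(\oint_\gamma\alpha)$ with $\alpha$ a closed complex $1$-form, I would let $\xi$ be the unique real harmonic $1$-form with $\oint_\gamma\xi=-\Re\oint_\gamma\alpha$ for every $\gamma$ (Hodge theory) and put $\theta:=\xi+i\ast\xi$; since $\ast\ast=-1$ on $1$-forms of a surface, $\theta$ is a closed $(1,0)$-form, hence holomorphic, the character $\chi_1(\gamma):=\exp(\oint_\gamma\theta)$ is inessential, and $\chi_0\chi_1$ is $U(1)$-valued. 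For triviality of the intersection, an inessential and normalized character arises from a holomorphic $\theta$ all of whose periods have vanishing real part, so $\Re\theta$ is a closed, coclosed, period-free $1$-form on the compact $X$; it is therefore $dg$ with $g$ single-valued harmonic, hence $g$ is constant, $\Re\theta=0$, and $\theta=0$.

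With this in hand, existence is completed: take $\theta$ and $\chi_1$ as constructed above, so that $\chi_0\chi_1$ is normalized, and let $h:=\exp(\int\theta)$, a nowhere-vanishing holomorphic multivalued function with character $\chi_1$; then $f:=f_0h$ still has divisor $D$ (since $(h)=0$) and has the normalized character $\chi:=\chi_0\chi_1$. For uniqueness, if $f,f'$ both have divisor $D$ with normalized characters $\chi,\chi'$, then $f/f'$ is holomorphic and nowhere zero, so it is a nowhere-vanishing holomorphic multivalued function and $\chi/\chi'$ is inessential; being also a ratio of normalized characters it lies in $\mathrm{Hom}(\Lambda,U(1))$, so by the triviality of the intersection it is trivial. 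Hence $\chi=\chi'$, $f/f'$ is single-valued, holomorphic and nowhere zero on the compact $X$, hence constant, and $f'=cf$. This yields uniqueness of $f$ up to a multiplicative constant and of the associated normalized character.

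The step I expect to be the obstacle is the character decomposition---equivalently, the statement that every $\mathbb{C}^*$-valued character of $\Lambda$ is uniquely a product of the character of a nowhere-vanishing holomorphic multivalued function and a $U(1)$-valued character. This is precisely where the isomorphism $H^1(X,\mathbb{R})\xrightarrow{\ \sim\ }H^{1,0}(X)$, $\xi\mapsto\xi+i\ast\xi$, and the compactness of $X$ (via the maximum principle, forcing a single-valued harmonic function to be constant) enter; granting this and the existence of the normalized third-kind differentials, everything else is formal.
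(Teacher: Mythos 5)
Your proof is correct. Be aware, however, that the paper does not actually prove this statement: it is quoted from Farkas--Kra III.9.10, and the only piece the paper makes explicit is the existence formula $f(P)=\exp\sum_{j}\int_{P_0}^{P}\tau_{P_j,Q_0}$ built from the $a$-normalized differentials of the third kind --- which is precisely your $f_0$. What you supply beyond that, and what the paper delegates entirely to the reference, are the two genuinely nontrivial ingredients: the internal direct-product decomposition $\mathrm{Hom}(\Lambda,\C^*)=\{\text{inessential}\}\cdot\mathrm{Hom}(\Lambda,U(1))$ with trivial intersection, obtained via the Hodge-theoretic map $\xi\mapsto\xi+i*\xi$ onto holomorphic differentials and the maximum principle on the compact surface; and the uniqueness of both the function (up to a constant) and its normalized character. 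Your observation that $\chi_0$ itself \emph{need not} be normalized is exactly right and worth emphasizing: by reciprocity its $b_j$-multiplier is $\exp\bigl(2\pi i\sum_k\int_{Q_k}^{P_k}\zeta_j\bigr)$, which is unitary only when the Abel--Jacobi image of $D$ is real, so the correction by the inessential character $\exp(\oint\theta)$ is an essential step rather than a formality --- a point the paper's displayed formula glosses over. All the remaining steps (well-definedness of $f_0$ from integrality of residues, well-definedness of $\Re\oint_\gamma\eta$ on homology, the degenerate cases of repeated points and $D=0$, and the uniqueness argument via a single-valued nowhere-vanishing holomorphic function on a compact surface being constant) check out, so this is a complete and correct proof of the cited classical result.
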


This leads to yet another description of line bundles associated to divisors of degree zero, \cite{FarKra} III.9.16:
\begin{eqnarray} \label{char}
Jac(X) & \simeq & \{\text{characters on $\Lambda$}\}/\{\text{inessential characters}\}\\
 & \simeq & \{\text{unitary characters on on $\Lambda$}\} \nonumber
\end{eqnarray} 
which is given by association of the Abel image $\Phi(D) \in Jac(X)$ to the unique normalized character $\chi$ specified in the theorem.
\medskip

For our case, the associated  multiplicative function that the theorem specifies is explicitly constructed as follows: consider $D = P_1 + \cdots + P_n - n Q_0$ where $P_1 + \cdots + P_n$ is an effective divisor corresponding to an admissible connection  under $I$. Then the multiplicative function belonging to character $\chi$, associated to $D$, is explicitly given by 
\begin{eqnarray}
f(P) &=& \exp \sum_{j=1}^n\int_{P_0}^{P} \tau_{P_j, Q_0} 
\end{eqnarray}
where $\tau_{P, Q}$ is the normalized differential which is holomorphic except for two simple pole of residues -1 and +1 at $P$ and 
$Q$, respectively. By normalized here one means that $\int_{a_j} \tau_{P, Q} = 0$, where the $a_j$ belong to the canonical homology basis chosen in \eqref{AJ-map}. 
 (Such differentials are called differentials of the third kind and are unique once $P$ and $Q$ are specified.) This is a direct consequence of the Riemann-Roch Theorem,  as we will now sketch. 
 
 \medskip
 
 The differential $\tau_{P, Q}$ is a section of the bundle $L = K\otimes \mathcal{L}_{P} \otimes \mathcal{L}_{Q}$ where $K$ is the canonical bundle (see Proposition \ref{prop:EF-K}).  This bundle has degree $2g$ and, by duality $\dim H^1(X,L ) = 0$. Hence by the Riemann-Roch Theorem we know that 
$\dim H^0(X,L ) = 2g - g +1 = g+1$. Normalization of the differential imposes $g$ independent conditions and so the normalized differentials of this type are unique up to a constant multiple. But since the residues at $P$ and $Q$  must be $-1$ and $+1$ respectively. This pins down the constant multiple uniquely. Note that, by the classical residue theorem \cite{A}, the sum of the two residues must be zero a priori. 
\medskip

The holomorphic section, unique up to an overall constant multiple, 
 of the holomorphic bundle associated to this divisor $P_1 + \cdots + P_n$  
is then explicitly represented as
\begin{eqnarray} \label{preBA}
\hat{\phi}(P) & = & f(P) s^n_0(P).
\end{eqnarray}
\medskip
where $s^n_0(P)$ is the $n$-fold product of the section $s_0(P)$, fixed above, with itself.
Let $a^c$ correspond to the divisor $P_1 + \cdots + P_n$ under the map $I$. By Theorem \ref{thm:bundle-holomorphiz}, 
there is a gauge transformation $g$ that conjugates $\p_{a^c}''$ to the $\bar{\partial}-$operator on the holomorphic bundle associated to $P_1 + \cdots + P_n$. It follows that the element of the kernel of $\p_{a^c}''$ that corresponds to $\hat{\phi}$ is given by
\begin{eqnarray} \label{BA}
\phi(P) & = & g(P) f(P) s^n_0(P).
\end{eqnarray} 
 This representation has the form of a {\it Baker-Akhiezer} section \cite{K}.

\begin{remark}

i) The transformation from  (\ref{preBA}) to  (\ref{BA}) can be viewed as corresponding to a change of bundle inner product. Indeed, the isomorphism (\ref{NS}) is mediated by gauge equivalences in the symmetric space $\mathcal{G}^c/\mathcal{G}$. This gauged space corresponds to changes of hermitian inner product on the bundle. Indeed constructions on each side of the isomorphism are made by fixing a bundle inner product. 

 ii) When $n = g$, the Baker-Akhiezer section \eqref{BA} may be re-expressed in terms of Riemann's theta function \eqref{theta}
as 
\beann
\phi(P) & = & g(P)\theta\left(\Phi(P) - \Phi(D) - \kappa\right).
\eeann

iii) It is interesting to inquire how the character of the explidit representation \eqref{BA} is related to the automorphy of the uniformized connections we considered in Section \ref{sec:cc-con-exist}. Because $\tau_{P ,Q}$ is normalized, the corresponding character, $\chi$, of  $f$ is unitary and corresponds precisely to $\s$ in \eqref{n-sig-automor} and $Q$ corresponds to the base point $z$ in $\rho_{n, \s}$. The holomorphic structure on $E_n$ and corresponding constant curvature connection under \eqref{NS} are determined by $s^n_0(P)$.
In this way all degrees of freedom are accounted for.
\end{remark}


\section{The bundle $E\otimes F$} \label{sec:bundleEF} 

 \begin{proposition}\label{prop:EF-K}
Let  $E$ and $F$ the bundles have the automorphies \eqref{n-automor} and  \eqref{tilde-automor}. The holomorphic line bundle $E\otimes F$ is isomorphic to $K^{\otimes b}$, where $K$ is the holomorphic cotangent bundle of the Riemann surface $X$ (the canonical bundle) and $b := \frac{n }{2g-2}$. Moreover,  $E\otimes F$ has degree $n$. \end{proposition}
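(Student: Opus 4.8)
The plan is to prove this over the universal cover by comparing automorphy factors, which reduces the isomorphism to a one-line algebraic identity and the degree to the classical fact $\deg K = 2g-2$. Recall that a holomorphic line bundle on $X = \bH/\G$ is determined by its automorphy factor up to multiplication by a holomorphic coboundary $(\g^*\tilde h)\,\tilde h^{-1}$; in particular, two bundles with the same automorphy factor are holomorphically isomorphic, via the identity of $\bH\times\C$.

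First I would collect the relevant automorphy factors. By \eqref{n-automor}, $E = E_n$ carries $\rho_n(\g,z) = \bigl[(c\overline z + d)/(cz+d)\bigr]^{-b}$ with $b := n/(2g-2)$. The bundle $F$, defined (preceding Corollary \ref{cor:NullDeltaA-NullPart}) by the automorphy map $\tilde\rho^{-1}$ with $\tilde\rho$ as in \eqref{tilde-automor}, carries $(\g,z)\mapsto |cz+d|^{2b}$. And the canonical bundle $K$ carries $(cz+d)^{2}$: a global holomorphic one-form $f(z)\,dz$ on $X$ lifts to a $\G$-invariant form on $\bH$, so $f(\g z) = (cz+d)^{2}f(z)$, i.e.\ the holomorphic sections of $K$ are the weight-$2$ automorphic forms for $\G$; hence $K^{\otimes b}$ carries $(cz+d)^{2b}$.

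Next I would tensor and simplify. Using that $c,d\in\R$, so $\overline{cz+d} = c\overline z + d$ and $|cz+d|^{2b} = (cz+d)^{b}(c\overline z+d)^{b}$, the automorphy factor of $E\otimes F$ is
\[
\rho_n(\g,z)\,|cz+d|^{2b}
= (c\overline z+d)^{-b}(cz+d)^{b}\cdot (cz+d)^{b}(c\overline z+d)^{b}
= (cz+d)^{2b},
\]
which is exactly the automorphy factor of $K^{\otimes b}$. Hence $E\otimes F \cong K^{\otimes b}$ as holomorphic line bundles. The degree then follows at once: $\deg(E\otimes F) = \deg K^{\otimes b} = b\,(2g-2) = n$, using $\deg K = 2g-2$ and multiplicativity of the degree under tensor powers. (Alternatively, since $|cz+d|^{2b} = (\g^*\tilde h)\,\tilde h^{-1}$ for the smooth positive function $\tilde h = y^{-b}$, by $\Im(\g z) = \Im z/|cz+d|^2$, the bundle $F$ is topologically trivial, so $\deg(E\otimes F) = \deg E + \deg F = n + 0$ by Theorem \ref{thm:aut-factsEn}.)

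The step I expect to need the most care is the bookkeeping, not the computation: since $b = n/(2g-2)$ need not be an integer, the expressions $(cz+d)^{2b}$, $|cz+d|^{2b}$, $[\,\cdot\,]^{-b}$ and the symbol $K^{\otimes b}$ all require a convention. On $\bH$ this is harmless --- $cz+d$ and $c\overline z + d$ never vanish on the simply connected domain $\bH$, so one fixed branch of $\log$ turns each expression into a well-defined smooth (resp.\ holomorphic) function, with the cocycle identity holding exactly as verified in the proofs of Theorems \ref{thm:aut-factsEn} and \ref{thm:conn-const-curv} --- but $K^{\otimes b}$ must then be understood as the specific $b$-th tensor root of $K$ picked out by the automorphy factor $(cz+d)^{2b}$ (equivalently, after pulling back to the $\tfrac{2g-2}{\gcd(n,2g-2)}$-fold cover of $X$ on which the weight becomes integral, in the spirit of the Remark following Theorem \ref{thm:conn-const-curv}). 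I would fix this convention at the outset, and also note that the holomorphic structure on $E\otimes F$ that one should compare with $K^{\otimes b}$ is the one coming from $\partial_{a^n}''$ on $E$; that it corresponds to the automorphy-factor description above is precisely the content of the holomorphization map of Theorem \ref{thm:bundle-holomorphiz}. With those conventions in place, the displayed identity settles the proposition.
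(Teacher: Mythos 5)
Your proof is correct and follows essentially the same route as the paper's: multiply the automorphy factors of $E$ and $F$, simplify using $\overline{cz+d}=c\overline z+d$ to get $(cz+d)^{2b}$, identify this with the automorphy factor of $K^{\otimes b}$, and deduce the degree from $\deg K = 2g-2$ (the paper phrases the last step as $(E\otimes F)^{\otimes(2g-2)}=K^{n}$ to keep all exponents integral, which is the same computation). Your added attention to the branch conventions for the non-integer exponent $b$, and the alternative degree argument via the topological triviality of $F$, are sound refinements rather than a different approach.
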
 
\begin{proof} Since the bundles $E$ and $F$ have the automorphies $\rho$ and $\tilde{\rho}^{-1}$ (see \eqref{n-automor} and  \eqref{tilde-automor}), respectively, the bundle $E\otimes F$ has the automorphy
\begin{align*}
\rho(s,z)\tilde{\rho}(s,z)^{-1} &=  \biggl[\frac{c\overline{z} +d}{cz + d}\biggr] ^{-b} |cz + d|^{2b} \\
& 
 = (cz + d)^{2b},
\end{align*}
 where again $b := \frac{2\pi n}{|X|} = \frac{n}{2g-2}$. So $E\otimes F$ 
  is a holomorphic line bundle with automorphy factor $(cz + d)^{2b}$. 
Since  the holomorphic cotangent bundle, $K\cong T^*_X$, of the Riemann surface $X$ has the automorphy factor $(cz + d)^{2}$, we conclude that $E\otimes F\cong K^{b}$. 

Since $(E\otimes F)^{\otimes(2g-2)}=K^n$,  $K$ has degree $2g-2$, and the degree in a tensor product is additive, we have that $
(2g-2)\deg(E\otimes F)=\deg K^n=n(2g-2).$ Hence the bundle $E\otimes F$ has degree $n$.\end{proof} 

Proposition \ref{prop:EF-K} implies, in particular that the line bundle $F$ has degree $0$.


\bibliographystyle{unsrt}
\bibliography{gaugefixing}

 \end{document}